\newcommand{\eps}{\varepsilon}
\newcommand{\cS}{\mathcal{S}}
\newcommand{\Z}{\Bbb Z}
\newcommand{\RR}{\mathbb{R}}
\newcommand{\ZZ}{\mathbb{Z}}
\newcommand{\NN}{\mathbb{N}}
\newcommand{\DD}{\Delta}
\newcommand{\Bcal}{\mathcal{B}}
\newcommand{\Rcal}{\mathcal{R}}
\newcommand{\Scal}{\mathcal{S}}
\newcommand{\dd}{\partial}
\newcommand{\divv}{\mbox{div\,}}
\newcommand{\supp}{\mbox{supp }}
\newcommand{\abs}[1]{\left\vert#1\right\vert}
\newcommand{\set}[1]{\left\{#1\right\}}
\newcommand{\psca}[1]{\left\langle#1\right\rangle}
\newcommand{\pare}[1]{\left(#1\right)}
\newcommand{\norm}[1]{\left\Vert#1\right\Vert}
\newcommand{\f}{\frac}
\newcommand{\pa}{\partial}
\newcommand{\p}{\partial}
\newcommand\na{\nabla}
\newtheorem{theorem}{Theorem}[section]
\newtheorem{definition}[theorem]{Definition}
\newtheorem{lemma}[theorem]{Lemma}
\newtheorem{proposition}[theorem]{Proposition}
\newtheorem{remark}[theorem]{Remark}
\title{\textsc{Hydrostatic approximation of the 2D MHD system in a thin strip with a small analytic data}}
\author{Nacer Aarach }
\date{}
\begin{document}

\maketitle

\begin{abstract}

In this paper, we study the global well-posedness of the two dimensional rescaled anisotropic MHD system and also to the hydrostatic MHD  with no slip boundary condition in a  strip,  for small initial data, which are analytic in the horizontal variable. We also justify the limit of the MHD system (when $\epsilon\to 0$ where $\epsilon$ denotes the height of the initial strip) is given by a couple of a Prandtl like system for the velocity and a magnetic field equation when the initial data are small and analytic.

\end{abstract}

\section{Introduction}
The dynamics of an electrically conducting liquid near a wall has been a topic of constant interest, at least since the pioneering work of Hartmann \cite{JH}. It is relevant to many domains of active research, such as dynamo theory \cite{BEE2004} or nuclear fusion \cite{J}. In this paper, we consider the global well-posednss of the following two-dimensional Navier-Stokes system coupled with an approximation of the Maxwell equation, this system is called MHD system. We studied this system in a thin striped domain and provided with Dirichlet boundary conditions. Then let $\mathcal{S}^\eps = \lbrace (x,y) \in \mathbb{R}^2 : 0<y<\eps \rbrace $ where $\eps$ is the width of the strip. So, our system is of the following form:
\begin{equation}
	\label{NS2}
	\left\{\;
	\begin{aligned}
		&\partial_t U^\eps + U^\eps .\nabla U^\eps-\eps^2 \Delta U^\eps + \nabla P^\eps = B^\eps .\nabla B^\eps && \qquad \mbox{in }\,\mathcal{S} \times ]0,\infty[\\
		&\partial_t B^\eps + U^\eps.\nabla B^\eps - \eps^2 \Delta B^\eps = B^\eps .\nabla B^\eps && \qquad \mbox{in }\,\mathcal{S} \times ]0,\infty[\\
		&\divv U^\eps = \divv B^\eps = 0 && \qquad \mbox{in }\,\mathcal{S} \times ]0,\infty[ \\
		&U^\eps_{/t=0} = U^\eps_0, \text{ \ \ \ } B^\eps_{/t=0} = B^\eps_0 && \qquad \mbox{in }\,\mathcal{S} \times ]0,\infty[  		
	\end{aligned}
	\right.
\end{equation}
where $U^\eps(t,x,y) = \pare{u^\eps(t,x,y), v^\eps(t,x,y)}$ is the velocity vector of the fluid and $P^\eps(t,x,y)$ represent the pressure function which guarantees the free divergence property of the velocity field $U^\eps$; $B^\eps(t,x,y) = \pare{b^\eps(t,x,y), c^\eps(t,x,y)}$ represents the magnetic field of the fluid. The system $\eqref{NS2}$ is completed by the following boundary condition
\begin{equation*}
	U^\eps_{\vert y=0} = U^\eps_{\vert y=\eps} = 0 \quad\mbox{and}\quad B^\eps_{\vert y=0} = B^\eps_{\vert y=\eps} = 0.
\end{equation*}
In our equation \eqref{NS2} the Laplace is given by $\Delta = \partial_x^2 + \partial_y^2$. 

The purpose of this work is to rely on this recent progress to gain insight and perspective in the study of the Magnetohydrodynamic system (MHD), which is coupled the Navier-Stokes equation with small viscosity and an approximation of the Maxwell equation for the electromagnetic field. this note is to gain some insight into the analysis of MHD boundary layer models. It is primarily intended to mathematicians, either applied or interested in the theory of fluid PDE’s. The goal is twofold. First, we wish to provide a clear picture of the various models available, depending on the asymptotic under consideration. %Then, we wish to emphasize the stabilizing effect of the magnetic field, through partial linear stability analysis. 
We hope that this work will serve as a starting point for more complete mathematical and numerical analysis. Between the mechanics of "classic" fluids and the magnetohydrodynamics, lies the electrohydrodynamics or mechanical fluids ionized in the presence of electric fields (electrostatic), but without magnetic field. This system describes the evolution of a conducting fluid under the effect of a magnetic field (such as, for instance, the liquid iron in the Earth's core under the influence of the Earth's magnetic field) in a strip  (larger than the atmosphere) which is called the magnetosphere.

%The outline of the paper is as follows. We consider the case of a space between two plane $\Omega = \mathbb{R} \times (0,1)$ this means that we have the horizontal variable in the space $\mathbb{R}$ and the vertical variable in $(0,1)$ satisfied the Dirichlet boundary condition this applies to all vectors $(U,B)$.

In order to describe hydrodynamical flows on the earth, in geophysics, it is usually assumed that vertical motion is much smaller than horizontal motion and that the fluid layer depth is small compared to the radius of the sphere, thus they are good approximation of global magnetosphere flow and oceanic flow. The thin-striped domain in the system \eqref{NS2} is considered to take into account this anisotropy between horizontal and vertical directions. Under this assumption, it is believed that the dynamics of fluids in large scale tends towards a geostrophic balance (see \cite{G1982}, \cite{H2004} or \cite{PZ2005}). In a formal way, as in \cite{PZZ2019}, taking into account this anisotropy, we also consider the initial data in the following form, 
 
$$ U^\eps_{\vert t=0} = U_{0}^{\eps} = \left( u_0\pare{x,\frac{y}{\eps}},\eps v_{0}\pare{x,\frac{y}{\eps}}\right) \text{ \ \ in \ \ } \mathcal{S}^{\eps}$$ and $$B^\eps_{\vert t=0} = B^\eps_0 = \left( b_0\pare{x,\frac{y}{\eps}},\eps c_{0}\pare{x,\frac{y}{\eps}}\right).$$

Then, In our paper, we look for solutions of the system $\eqref{NS2}$ in the following form  : 
\begin{equation} 
	\label{S1eq3}
	\left\{\;
	\begin{aligned}
		&U^\eps(t,x,y)=\pare{u^\eps \pare{t,x,\frac{y}{\eps}}, \eps v^\eps \pare{t,x,\frac{y}{\eps}}}\\
		&B^\eps(t,x,y)= \pare{b^\eps \pare{t,x,\frac{y}{\eps}}, \eps c^\eps \pare{t,x,\frac{y}{\eps}}} \\
		&P^\eps(t,x,y)=p^\eps \pare{t,x,\frac{y}{\eps}}.
	\end{aligned}
	\right.
\end{equation}

Let $\mathcal{S} = \left\{(x,z)\in\mathbb{R}^2:\ 0<y<1\right\}$, we start by giving the two equations obtained by replacing $U^\eps$ by $u^\eps$ and $v^\eps$ and $B^\eps$ by $b^\eps$ and $c^\eps$
\begin{equation}
	\label{eq:hydroPE}
	\left\{\;
	\begin{aligned}
		&\partial_t u^\eps +u^\eps\partial_x u^\eps + v^\eps\partial_yu^\eps-\eps^2\partial_x^2u^\eps-\partial_y^2u^\eps+\partial_x p^\eps=b^\eps\pa_xb^\eps + c^\eps\pa_yb^\eps && \mbox{dans } \; \mathcal{S}\times ]0,\infty[\\
		&\eps^2\left(\partial_tv^\eps+u^\eps\partial_x v^\eps+v^\eps\partial_yv^\eps-\eps^2\partial_x^2v^\eps-\partial_y^2v^\eps\right)+\partial_yp^\eps=\eps^2 \left( b^\eps \pa_x c^\eps + c^\eps \pa_y c^\eps \right) && \mbox{dans } \; \mathcal{S}\times ]0,\infty[\\
		&\partial_t b^\eps +u^\eps\partial_x b^\eps + v^\eps\partial_yb^\eps-\eps^2\partial_x^2b^\eps-\partial_y^2b^\eps = b^\eps\pa_xu^\eps + c^\eps\pa_yu^\eps && \mbox{dans } \; \mathcal{S}\times ]0,\infty[\\
		&\eps\left(\partial_tc^\eps+u^\eps\partial_x c^\eps+v^\eps\partial_yc^\eps-\eps^2\partial_x^2c^\eps-\partial_y^2c^\eps\right) = \eps\left(b^\eps \pa_x v^\eps + c^\eps \pa_y v^\eps \right)  && \mbox{dans } \; \mathcal{S}\times ]0,\infty[\\
		&\partial_x u^\eps+\partial_yv^\eps=0 && \mbox{dans } \; \mathcal{S}\times ]0,\infty[\\
		&\partial_x b^\eps+\partial_yc^\eps=0 && \mbox{dans } \; \mathcal{S}\times ]0,\infty[\\
		&\left(u^\eps, v^\eps, b^\eps, c^\eps \right)|_{t=0}=\left(u_0, v_0,b_0, c_0 \right) && \mbox{dans } \; \mathcal{S}\\
		&\left(u^\eps, v^\eps,b^\eps, c^\eps \right)|_{z=0}=\left(u^\eps, v^\eps,b^\eps, c^\eps \right)|_{z=1}=0.
	\end{aligned}
	\right.
\end{equation}
Formally taking $\eps\to 0$ in the scaled system $\eqref{eq:hydroPE}$, we obtain the Prandtl system on u and also for b which is of the following form :
\begin{equation}
	\label{eq:hydrolimit}
	\left\{\;
	\begin{aligned}
		&\p_tu+u\p_x u+v\p_yu-\p_y^2u+\p_xp=b\pa_xb + c\pa_yb && \mbox{dans } \; \cS\times ]0,\infty[\\
		&\p_yp= 0 && \mbox{dans } \; \cS\times ]0,\infty[\\
		&\p_t b+u\partial_{x} b+v\partial_{y} b-\pa_y^2 b= b\pa_x u + c\pa_y u && \mbox{dans } \; \cS\times ]0,\infty[\\
		&\p_xu+\p_yv=0  && \mbox{dans } \; \cS\times ]0,\infty[\\
		&\p_xb+\p_yc=0  && \mbox{dans } \; \cS\times ]0,\infty[\\
		&u|_{t=0}=u_0  && \mbox{dans } \; \cS\\
		&b|_{t=0}=b_0 && \mbox{dans } \; \cS,
	\end{aligned}
	\right.
\end{equation}
where $(u,v)$ and $(b,c)$ satisfying the Dirichlet boundary condition  
\begin{equation} 
	\label{S1eq7}
	\left(u, v,b,c\right)|_{y=0}=\left(u, v,b,c\right)|_{y=1}=0.
\end{equation}

Our goal is to achieve the global existence of the solution for systems $\eqref{eq:hydroPE}$ et $\eqref{eq:hydrolimit}$, then we want to show the convergence of the scaled MHD system $\eqref{eq:hydroPE}$ to the limit system when $\eps$ tends towards zero.
\begin{remark}\label{remk}
We can also get the equation satisfied by $c$ when $\eps \to 0$. We remark that the system $\eqref{eq:hydrolimit}$ implies the following equation for $c$:
$$ \p_t c+u\partial_{x} c+v\partial_{y} c-\pa_y^2 c= b\pa_x v + c\pa_y v \ \ \ \  \mbox{dans } \; \cS\times ]0,\infty[. $$
  Indeed, by taking the derivative of the equation satisfied by $b$ with respect to $x$ variable, and using the free divergence of $U$ and $B$,  we obtain 
%\begin{align*}
%\pa_y &\left(  \p_t c+u\partial_{x} c+v\partial_{y} c-\pa_y^2 c- b\pa_x v - c\pa_y v \right) = \pa_t \pa_y c + \pa_y u \pa_xc + u \pa_x\pa_y c \\ &+ \pa_y v \pa_y c + v \pa_y (\pa_y c) - \pa_y^2(\pa_yc) -  \pa_yb\pa_x v- b\pa_x \pa_yv - \pa_yc\pa_y v - c\pa_y (\pa_y v)\\
%& = \pa_t (-\pa_xb) + \pa_y u \pa_xc + u \pa_x(-\pa_x b)+ (-\pa_x u (-\pa_x b) + v \pa_y (-\pa_x b) - \pa_y^2(-\pa_xb) \\ & -  \pa_yb\pa_x v- b\pa_x (-\pa_xu) - (-\pa_xb)(-\pa_x u) - c\pa_y (-\pa_x u) \\&= -\Big( \pa_t (\pa_xb) - \pa_y u \pa_xc + u \pa_x(\pa_x b)- \pa_x u \pa_x b + v \pa_y \pa_x b - \pa_y^2(\pa_xb) \\ & +  \pa_yb\pa_x v- b\pa_x (\pa_xu) + (\pa_xb)\pa_x u - c\pa_y (\pa_x u).
%\end{align*}
%In other hand we derivative the equation on $b$ with respect to $x$ we have  
 %\begin{align*}
%\pa_x &\left(  \p_t b+u\partial_{x} b+v\partial_{y} b-\pa_y^2 b- b\pa_x u - c\pa_y u \right) = \pa_t \pa_x b + \pa_x u \pa_xb + u \pa_x\pa_x b \\ &+ \pa_x v \pa_y b +v \pa_y (\pa_x b) - \pa_y^2(\pa_xb) -  \pa_xb\pa_x u- b\pa_x \pa_xu - \pa_xc\pa_y u - c\pa_y (\pa_x u).
%\end{align*}
%Then we deduce that 
 \begin{equation*}
 \pa_x \left(  \p_t b+u\partial_{x} b+v\partial_{y} b-\pa_y^2 b- b\pa_x u - c\pa_y u \right) =-\pa_y \left(  \p_t c+u\partial_{x} c+v\partial_{y} c-\pa_y^2 c- b\pa_x v - c\pa_y v \right). 
 \end{equation*}
 From which we obtain
 $$ \p_t c+u\partial_{x} c+v\partial_{y} c-\pa_y^2 c- b\pa_x v - c\pa_y v = c(t,x), $$ but as $c$ it zero into the boundary then $c(t,x) = 0$ this ensure the equation satisfied by $c$. 
\end{remark}

We remark that in the system \eqref{eq:hydrolimit}, we have to deal with the same difficulty as for Prandtl equations due to its degenerate form and the nonlinear term $v\dd_yu$ which will lead to the loss of one derivative in the tangential direction in the process of energy estimates. For a more complete survey on this very challenging problem and we suggest the reader to the works \cite{awxy, e-1,e-2,GV-D,M-W} and references therein. To overcome this difficulty, one has to impose a monotony hypothesis on the normal derivative of the velocity or an analytic regularity on the velocity. After the pioneer works of Oleinik \cite{oleinik} using the Croco transformation under the monotony hypothesis. Lately, this result was proved via energy method in \cite{awxy,M-W} indepzndently by taking care of the cancellation property in the convection terms of (PE). Sammartino and Caflisch \cite{samm} solved the problem for analytic solutions on a half space and later, the analyticity in normal variable $y$ was removed by Lombardo, Cannone and Sammartino in \cite{LCS2003}. The main argument used in \cite{samm, LCS2003} is to apply the abstract Cauchy-Kowalewskaya (CK) theorem. We also mention a well-posedness result of Prandtl system for a class of data with Gevrey regularity \cite{GvM2015}. Lately, for a class of convex data, G\'erard-Varet, Masmoudi and Vicol \cite{GvMV2019} proved the well-posedness of the Prandtl system in the Gevrey class. We also want to remark that unlike the case of Prandtl equations, in the system \eqref{eq:hydrolimit}, the pressure term is defined by $\dd_y p = 0$. One of the novelties of the paper is to find a way to treat the pressure term.

We also want to recall some results concerning the system $\eqref{eq:hydroPE}$. This system was studied in the 90's by Lions-Temam-Wang \cite{Temam,Wang,Lions}, where the authors considered full viscosity and diffusivity, and establish the global existence of weak solutions. Concerning the strong solutions for the 2D case, the locale existence result was established by Guillén-Gonzàlez, Masmoudi and Rodriguez-Bellido \cite{30}, while the global existence for 2D case was achieved by Bresch, Kazhikhov and Lemoine in \cite{6} and by Temam and Ziane in \cite{Ziane}. In our paper we also want to establish the global well posedness of the system \eqref{eq:hydroPE} in 2D case but in a thin strip.

\section{Littlewood-Paley Theory and Functional Framework}

To introduce the result of this paper, we will recall some elements of the Littlewood-Paley theory and also introduce the function space and technique using for the proof of our result. So we define the dyadic operator in the horizontal variable, (of $x$ variable) and for all $q \in \ZZ$, we recalle from \cite{BCD2011book} that  
	\begin{align*}
	\Delta_q^h a(x,y) &= \mathcal{F}_h^{-1}\pare{\varphi(2^{-q}\abs{\xi})\widehat{a}(\xi,y)},\\
	S_q^h a(x,y) &= \mathcal{F}_h^{-1}\pare{\psi(2^{-q}\abs{\xi})\widehat{a}(\xi,y)}.
	\end{align*}
	
where $\psi$ and $\varphi$ are a smooth function such that 
\begin{align*}
     &supp \ \varphi \subset \lbrace z\in \RR/ \text{ \ } \f34 \leq |z| \leq \f83 \rbrace \text{ \ and \ } \forall z >0, \text{ \ } \sum_{q\in \ZZ} \varphi (2^{-q} z) =1,\\
     & supp \ \psi \subset \lbrace z\in \RR/ \text{ \ } |z| \leq \f43 \rbrace \text{ \ and \ } \text{ \ } \psi(z) +  \sum_{q \geq 0} \varphi (2^{-q} z) =1.
 \end{align*}
 and 
 \begin{equation*}
\forall\, q,q' \in \NN, \; \abs{q-q'}\geq 2, \quad \supp \varphi(2^{-q} \cdot) \cap \supp \varphi(2^{-q'} \cdot) = \emptyset.
\end{equation*}
And in all that follows, $\mathcal{F}a$ and $\widehat{a}$ always denote the partial Fourier transform of the distribution $a$ with respect to the horizontal variable (of $x$ variable), that is, $\widehat{a}(\xi,y) = \mathcal{F}_{x \rightarrow \xi} (a)(\xi,y).$ We refer to \cite{BCD2011book} and \cite{B1981} for a more detailed construction of the dyadic decomposition. Combined the definition of the dyadic operator to 
\begin{equation}
\label{decomposition} \forall z\in \RR, \quad\psi(z) + \sum_{j\in\NN} \varphi (2^{-j} z) = 1,
\end{equation}
implies that all tempered distributions can be decomposed with respect to the horizontal frequencies as $$ u = \sum_{q \in \ZZ} \Delta_q^v u.$$
We now introduce the function spaces used throughout the paper. As in \cite{PZZ2019}, we define the Besov-type spaces $\Bcal^s$, $s\in\RR$ as follows.
\begin{definition}
	Let $s \in \RR$ and $\Scal = \RR \times ]0,1[$. For any $u \in S'_h(\Scal)$, \emph{i.e.}, $u$ belongs to $S'(\Scal)$ and $\lim_{q\to -\infty} \norm{S^h_q u}_{L\infty} = 0$, we set
	\begin{equation*}
	\norm{u}_{\Bcal^s} \stackrel{\tiny def}{=} \; \sum_{q \in \ZZ} 2^{qs} \norm{\DD_q^h u}_{L^2}.
	\end{equation*}
	\begin{enumerate}
		\item[(i)] For $s \leq \frac{1}2$, we define 
		\begin{equation*}
		\Bcal^s(\Scal) \stackrel{\tiny def}{=} \; \set{u \in S'_h(\Scal) \ : \ \norm{u}_{\Bcal^s} < +\infty}.
		\end{equation*}
		\item[(ii)] For $s \in \; ]k-\frac12, k+\frac12]$, with $k \in \NN^*$, we define $\Bcal^s(\Scal)$ as the subset of distributions $u$ in $S'_h(\Scal)$ such that $\dd_x^ku \in \Bcal^{s-k}(\Scal)$.
	\end{enumerate}
\end{definition}
For a better use of the smoothing effect given by the diffusion terms, we will work in the following Chemin-Lerner type spaces and also the time-weighted Chemin-Lerner type spaces.
\begin{definition} \label{def:CLspaces}
	Let $p \in [1,+\infty]$ and $T \in ]0,+\infty]$. Then, the space $\tilde{L}^p_T(\Bcal^s(\Scal))$ is the closure of $C([0,T];S(\Scal))$ under the norm
	\begin{equation*}
	\norm{u}_{\tilde{L}^p_T(\Bcal^s(\Scal))} \stackrel{\tiny def}{=} \; \sum_{q \in \ZZ} 2^{qs} \pare{\int_0^T \norm{\DD^h_q u(t)}_{L^2}^p dt}^{\frac{1}{p}},
	\end{equation*}
	with the usual change if $p = +\infty$.
\end{definition} \label{def:wCLspaces}
\begin{definition}
	\label{def:CLweight}
	Let $p \in [1,+\infty]$ and let $f \in L^1_{\tiny loc}(\RR_+)$ be a nonnegative function. Then, the space $\tilde{L}^p_{t,f(t)}(\Bcal^s(\Scal))$ is the closure of $C([0,T];S(\Scal))$ under the norm
	\begin{equation*}
	\norm{u}_{\tilde{L}^p_{t,f(t)}(\Bcal^s(\Scal))} \stackrel{\tiny def}{=} \; \sum_{q\in\ZZ} 2^{qs} \pare{\int_0^t f(t') \norm{\DD^h_q u(t')}_{L^2}^p dt'}^{\frac{1}{p}}.
	\end{equation*}
\end{definition}

The following Bernstein lemma gives important properties of a distribution $u$ when its Fourier transform is well localized. We refer the reader to \cite{C1995book} for the proof of this lemma.
\begin{lemma}
	\label{lem:Bernstein}
	Let $k\in\NN$, $d \in \NN^*$ and $r_1, r_2 \in \RR$ satisfy $0 < r_1 < r_2$. There exists a constant $C > 0$ such that, for any $a, b \in \RR$, $1 \leq a \leq b \leq +\infty$, for any $\lambda > 0$ and for any $u \in L^a(\RR^d)$, we have
	\begin{equation*} %\label{eq:Bernstein1}
	\supp\pare{\widehat{u}} \subset \set{\xi \in \RR^d \;\vert\; \abs{\xi} \leq r_1\lambda} \quad \Longrightarrow \quad \sup_{\abs{\alpha} = k} \norm{\dd^\alpha u}_{L^b} \leq C^k\lambda^{k+ d \pare{\frac{1}a-\frac{1}b}} \norm{u}_{L^a},
	\end{equation*}
	and
	\begin{equation*} %\label{eq:Bernstein2}
	\supp\pare{\widehat{u}} \subset \set{\xi \in \RR^d \;\vert\; r_1\lambda \leq \abs{\xi} \leq r_2\lambda} \quad \Longrightarrow \quad C^{-k} \lambda^k\norm{u}_{L^a} \leq \sup_{\abs{\alpha} = k} \norm{\dd^\alpha u}_{L^a} \leq C^k \lambda^k\norm{u}_{L^a}.
	\end{equation*}
\end{lemma}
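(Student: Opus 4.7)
The plan is the classical spectral-localization plus Young's convolution argument. I would fix a radial bump $\chi \in C_c^\infty(\RR^d)$ with $\chi \equiv 1$ on the closed ball of radius $r_1$ and compact support, and set $h := \mathcal{F}^{-1}\chi$, which lies in the Schwartz class. Under the hypothesis $\supp \widehat u \subset \{\,|\xi| \leq r_1 \lambda\,\}$, the identity $\widehat u(\xi) = \chi(\xi/\lambda)\widehat u(\xi)$ gives the convolution representation $u = h_\lambda * u$, with $h_\lambda(x) := \lambda^d h(\lambda x)$.

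For the first bound I would differentiate under the convolution, $\partial^\alpha u = (\partial^\alpha h_\lambda) * u$, and apply Young's inequality with exponents $1 + 1/b = 1/a + 1/c$. A change of variables gives
\[
\|\partial^\alpha h_\lambda\|_{L^c} \;=\; \lambda^{\,|\alpha| + d(1/a - 1/b)}\,\|\partial^\alpha h\|_{L^c},
\]
and since $h$ is Schwartz the last norm is dominated by $C_0^{|\alpha|}$ uniformly in $\alpha$, which yields the claimed exponent $k + d(1/a - 1/b)$ on $\lambda$ and the geometric constant $C^k$.

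For the annular case, the upper estimate is the same argument with $\chi$ replaced by a bump $\widetilde\chi$ supported in a slightly larger annulus and equal to $1$ on $\{\,r_1 \le |\xi| \le r_2\,\}$, taking $a = b$. For the matching lower estimate I would use the algebraic identity $|\xi|^{2k} = \sum_{|\alpha|=k} (k!/\alpha!)\,\xi^{2\alpha}$ to write
\[
\widehat u(\xi) \;=\; \sum_{|\alpha|=k}\frac{k!}{\alpha!}\,\frac{\widetilde\chi(\xi/\lambda)\,\xi^\alpha}{|\xi|^{2k}}\,\xi^\alpha\widehat u(\xi),
\]
which expresses $u$ as a finite sum of convolutions $g_{\alpha,\lambda} * \partial^\alpha u$ whose kernels $g_{\alpha,\lambda}$ scale like $\lambda^{-k}$ times a fixed Schwartz function (since $\widetilde\chi(\eta)\,\eta^\alpha/|\eta|^{2k}$ is smooth and compactly supported in the annulus). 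Young's inequality then produces the desired bound $\|u\|_{L^a} \le C^k \lambda^{-k} \sup_{|\alpha|=k}\|\partial^\alpha u\|_{L^a}$.

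The main technical point to watch is that the constant must be geometric in $k$, namely $C^k$ rather than an unspecified $C_k$; this is precisely what the Schwartz decay of $h$ (respectively of the kernel $\mathcal{F}^{-1}(\widetilde\chi(\eta)\,\eta^\alpha/|\eta|^{2k})$) delivers, since $\|\partial^\alpha h\|_{L^c}$ and the $L^1$-norms of the Fourier multipliers appearing in the lower bound grow at most geometrically with $|\alpha|$ by standard Schwartz-class estimates. Apart from this observation, the proof is routine bookkeeping of the dilation factor $\lambda$; the statement is classical and a complete proof is available in the reference cited in the excerpt.
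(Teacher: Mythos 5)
Your argument is correct and coincides with the classical proof in the reference the paper cites for this lemma (the paper itself gives no proof, referring to \cite{C1995book}): spectral localization via a dilated bump, Young's inequality, and for the reverse bound the multinomial identity $|\xi|^{2k}=\sum_{|\alpha|=k}\frac{k!}{\alpha!}\xi^{2\alpha}$ together with the smoothness of $\widetilde\chi(\eta)\eta^{\alpha}/|\eta|^{2k}$ on an annulus. Your attention to the geometric dependence $C^k$ of the constants, obtained from the at-most-geometric growth of the Schwartz seminorms of the kernels, is exactly the point that makes the statement hold as written, so nothing is missing.
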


Finally to deal with the estimate concerning the product of two distribution, we shall frequently use the Bony's decomposition (see \cite{B1981} ) in the horizontal variable ( of the $x$ variable ) that for $f,g$ two tempered distribution  :
\begin{equation} \label{eq:Bony}
	fg = T^h_f g + T^h_g f + R^h(f,g),
	\end{equation}
	where
	$$ T^h_f g = \sum_{q} S^h_{q-1} f \Delta_q^h g, \text{ \ } T^h_g f = \sum_{q} S^h_{q-1} g \Delta_q^h f $$ and the rest term satisfied
	
	$$ R^h(f,g) = \sum_{q} \tilde{\Delta}^h_{q} f \Delta_q^h g \text{ \ with \ } \tilde{\Delta}^h_{q}f = \sum_{|q-q'| \leq 1} \Delta_{q'}^h f. $$
\subsection{Main results}

Our main difficulty relies in finding a way to estimate the nonlinear terms, which allows to exploit the smoothing effect given by the above function spaces. Using the method introduced by Chemin in \cite{C2004} (see also \cite{CGP2011}, \cite{PZ2011} or \cite{PZZ2019}), for any $f \in L^2(\Scal)$, we define the following auxiliary function, which allows to control the analyticity of $f$ in the horizontal variable $x$, 
\begin{align} \label{eq:AnPhi}
	f_{\phi}(t,x,y) = e^{\phi(t,D_x)} f(t,x,y) \stackrel{\tiny def}{=} \mathcal{F}^{-1}_h(e^{\phi(t,\xi)} \widehat{f}(t,\xi,y)) \text{ \ \ with \ \ } \phi(t,\xi) = (a- \lambda \theta(t))|\xi|,
\end{align}
where the quantity $\theta(t)$, which describes the evolution of the analytic band of $f$, satisfies
\begin{equation}
\label{eq:AnTheta} \forall\, t > 0, \ \dot{\theta}(t) \geq 0 \text{ \ \ and \ \ } \theta(0) = 0. 
\end{equation}

The main idea of this technique consists in the fact that if we differentiate, with respect to the time variable, a function of the type $e^{\phi(t,D_x)} f(t,x,y)$, we obtain an additional ``good term'' which plays the smoothing role. More precisely, we have
\begin{displaymath}
	\frac{d}{dt} \pare{e^{\phi(t,D_x)} f(t,x,y)} = -\dot{\theta}(t) \abs{D_x} e^{\phi(t,D_x)} f(t,x,y) + e^{\phi(t,D_x)} \dd_t f(t,x,y),
\end{displaymath}
where $-\dot{\theta}(t) \abs{D_x} e^{\phi(t,D_x)} f(t,x,y)$ gives a smoothing effect if $\dot{\theta}(t) \geq 0$. This smoothing effect allows to obtain our global existence and stability results in the analytic framework. Remark that the existence in the Prandtl case, we only have the local existence and the convergence is still an open question! Besides, MHD system is known to be very unstable.

\bigskip

Our main results are the following theorems.

The first result is the global well-posedness of the limit MHD system \eqref{eq:hydrolimit}, with small analytic data in the horizontal variable.
\begin{theorem} \label{th:1} [ Global well-posedness of the MHD limit system ].
Let $a>0$, we assume that for some constant $c_0$ sufficiently small independent of $\eps$, and for any initial data $(u_0,v_0,b_0,c_0) = (U_0,B_0)$ satisfying
\begin{align}\label{u0b0}
\Vert e^{a|D_x|} u_0 \Vert_{\mathcal{B}^\f12} + \Vert e^{a|D_x|} b_0 \Vert_{\mathcal{B}^\f12} \leq c_0 a,
\end{align} 
and there holds the compatibility condition $\int_0^1 (u_0,b_0) dy= 0$. Then the system limit \eqref{eq:hydrolimit} has a unique global solution $(U,B) = (u,v,b,c)$ satisfying 
\begin{align}\label{ub}
\Vert e^{\mathcal{R}t}(u_\phi,b_\phi)\Vert_{\tilde{L}^\infty(\mathbb{R}^+,\mathcal{B}^\f12)} + \Vert e^{\mathcal{R}t}\pa_y (u_\phi,b_\phi)\Vert_{\tilde{L}^2(\mathbb{R}^+,\mathcal{B}^\f12)} \leq C\Vert e^{a|D_x|} (u_0,b_0) \Vert_{\mathcal{B}^\f12}
\end{align} 
where $(u_\phi,v_\phi)$ will given by \eqref{eq:AnPhi} and $\mathcal{R}$ is a constant determined by the Poincarr\'e inequalityon the strip domain $\mathcal{S} = \left\{(x,y)\in\mathbb{R}^2:\ 0<y<1\right\}$. 
\end{theorem}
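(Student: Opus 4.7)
My overall plan is to adapt the Chemin-style analytic weight method used in \cite{PZZ2019} for hydrostatic Navier-Stokes, adding the magnetic coupling. Introduce the Fourier multiplier $e^{\phi(t,D_x)}$ with $\phi(t,\xi)=(a-\lambda\theta(t))|\xi|$, and choose $\theta$ (to be fixed during the nonlinear estimates, typically $\dot\theta(t)=\|\partial_x(u_\phi,b_\phi)\|_{\Bcal^{1/2}}$) so that the artificial extra term $-\dot\theta|D_x|e^{\phi}f$ produced when differentiating $f_\phi$ in $t$ provides a smoothing effect capable of absorbing the one lost horizontal derivative in the Prandtl-type nonlinear terms. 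The final step will be a bootstrap on $T^\star\eqdef\sup\{T:\lambda\theta(T)\leq a/2\}$ which, together with the smallness of the data, forces $T^\star=+\infty$.

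The first technical step is to give a workable formula for the pressure, exploiting $\partial_y p=0$. Integrating the $u$-equation in $y$ over $[0,1]$, using the Dirichlet boundary conditions together with the two divergence-free relations to rewrite $\int v\partial_yu\,dy=\int u\partial_xu\,dy$ and $\int c\partial_yb\,dy=\int b\partial_xb\,dy$, I obtain
\begin{equation*}
\partial_xp(t,x)=\partial_x\!\int_0^1\!\!\bigl(b^2-u^2\bigr)dy+\bigl[\partial_yu\bigr]_{y=0}^{y=1}-\partial_t\!\int_0^1 u\,dy.
\end{equation*}
Integrating the $u$-equation again in $y$ and using the compatibility $\int_0^1u_0\,dy=0$ (and its analogue for $b_0$, which will be preserved in time by the same computation), this collapses to an explicit expression of $\partial_xp$ in terms of $(u,v,b,c)$ and of $[\partial_yu]_0^1$, so the pressure can be treated like any other nonlinearity.

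Next, applying $e^{\phi(t,D_x)}\Delta_q^h$ to both the $u$- and $b$-equations, testing against $\Delta_q^h u_\phi$, $\Delta_q^h b_\phi$ in $L^2(\Scal)$, and summing in $q$ with weight $2^{q/2}$ yields
\begin{equation*}
\tfrac12\tfrac{d}{dt}\sum_q2^q\|\Delta_q^h(u_\phi,b_\phi)\|_{L^2}^2+\dot\theta\,\|\partial_x(u_\phi,b_\phi)\|_{\Bcal^{1/2}}^2+\|\partial_y(u_\phi,b_\phi)\|_{\Bcal^{1/2}}^2\leq\text{nonlinear}.
\end{equation*}
The $e^{\Rcal t}$ factor in the target estimate comes from Poincaré's inequality on the strip applied to $u,b$ vanishing on $y=0,1$: multiplying by $e^{2\Rcal t}$ with $\Rcal$ smaller than the Poincaré constant preserves the coercive structure above.

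The main obstacle, as always in Prandtl-type problems, is the vertical-transport terms $v\partial_yu$, $c\partial_yu$, $v\partial_yb$, $c\partial_yb$: since $v=-\int_0^y\partial_xu\,dy'$ and $c=-\int_0^y\partial_xb\,dy'$, they effectively carry a lost horizontal derivative. I plan to expand each product via Bony's horizontal paraproduct \eqref{eq:Bony}, to pull the $\int_0^y$ out of the $y$-integration via Minkowski, and to distribute regularity between $\partial_y$ (controlled by the diffusion term) and $\partial_x$ (controlled by the smoothing $\dot\theta$ term). A careful bookkeeping should produce bounds of the form $\int_0^t\!\dot\theta\,\|\partial_x(u_\phi,b_\phi)\|_{\Bcal^{1/2}}^2\,ds$ plus $\int_0^t\!\|\partial_y(u_\phi,b_\phi)\|_{\Bcal^{1/2}}^2\,ds$ with small prefactors, which close once the initial norm is $\leq c_0a$. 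The symmetric structure of the MHD coupling $-\bigl(b\partial_xb+c\partial_yb\bigr)$ and the magnetic stretching $-\bigl(b\partial_xu+c\partial_yu\bigr)$ is handled by summing the $u_\phi$- and $b_\phi$-energies together, so that the cross terms cancel as in the classical MHD energy identity; this cancellation is what will allow the same weight $\phi$ to work simultaneously for $u$ and $b$ and give the combined bound \eqref{ub}. The continuation argument on $T^\star$ then closes the proof.
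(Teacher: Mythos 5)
Your overall architecture (analytic weight $e^{\phi(t,D_x)}$, dyadic energy estimates, absorption of the lost horizontal derivative by the $\lambda\dot\theta(t)|D_x|$ term, bootstrap on $T^\star$, and $e^{\mathcal{R}t}$ gained from Poincar\'e in $y$) matches the paper, but there is a concrete gap in the choice of the phase: you propose $\dot\theta(t)=\|\partial_x(u_\phi,b_\phi)\|_{\mathcal{B}^{1/2}}$. The limit system has \emph{no horizontal dissipation}, so nothing in the a priori estimates makes $\int_0^\infty\|\partial_x(u_\phi,b_\phi)\|_{\mathcal{B}^{1/2}}\,dt$ finite, let alone $\leq a/(2\lambda)$; the weighted norm $\|(u_\phi,b_\phi)\|_{\tilde{L}^2_{t,\dot\theta}(\mathcal{B}^{1})}$ you do control is quadratic in the solution with weight $\dot\theta$ itself, so the bootstrap condition $\theta(t)\le a/\lambda$ cannot be closed with this choice. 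The paper takes $\dot\theta(t)=\|\partial_y(u_\phi,b_\phi)(t)\|_{\mathcal{B}^{1/2}}$, which is exactly the quantity made globally square-integrable by the vertical diffusion, so that $\theta(\infty)\lesssim\|e^{\mathcal{R}t}\partial_y(u_\phi,b_\phi)\|_{\tilde{L}^2(\mathcal{B}^{1/2})}\lesssim\|e^{a|D_x|}(u_0,b_0)\|_{\mathcal{B}^{1/2}}<a/(2\lambda)$ by Cauchy--Schwarz against $e^{-\mathcal{R}t}$. Moreover this identification is not only needed for the bootstrap: in the paraproduct lemmas the low-frequency factors are bounded through $\dot H^1_y\hookrightarrow L^\infty_y$ by $\|\partial_y(\cdot)_\phi\|_{\mathcal{B}^{1/2}}$, and it is precisely by recognizing this factor as $\dot\theta$ that the nonlinear terms are rewritten as $\tilde{L}^2_{t,\dot\theta}(\mathcal{B}^{s+1/2})$ norms and absorbed by the smoothing term for $\lambda$ large. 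Your plan "to be fixed during the nonlinear estimates" leaves out this essential point.

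Two further remarks. First, the pressure: you derive a formula for $\partial_x p$ containing the traces $[\partial_y u]_{y=0}^{y=1}$ and propose to "treat it like any other nonlinearity"; those traces are not controlled by the norms you propagate (they would require $\partial_y^2 u$ in $L^2_y$), so this route would stall. In fact no estimate of $p$ is needed: since $\partial_y p_\phi=0$ and $\partial_x u_\phi+\partial_y v_\phi=0$ with Dirichlet conditions (equivalently, since $\int_0^1 u\,dy\equiv0$ by the compatibility condition), one has $\langle\Delta_q^h\partial_x p_\phi,\Delta_q^h u_\phi\rangle_{L^2}=0$, which is how the paper disposes of the pressure. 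Second, the "classical MHD cross-cancellation" you invoke does not survive the frequency localization and the analytic weight exactly (commutators appear); the paper does not use it, but instead estimates each coupling term $b\partial_x b$, $c\partial_y b$, $b\partial_x u$, $c\partial_y u$ directly by the same paraproduct lemmas, the smallness and the choice of $\lambda$ doing the absorption. With these corrections — vertical choice of $\dot\theta$, cancellation (not estimation) of the pressure term, and direct estimates of the magnetic coupling — your scheme becomes the paper's proof.
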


The second result is the global will-posedness of the scaled system \eqref{eq:hydroPE} with small analytic initial data in the horizontal variable $x$. The main interesting point here is that the smallness of data is independent of $\eps$ and there holds the global uniform estimate \eqref{ubvc} with respect to the parameter $\eps$.
\begin{theorem} \label{th:2} [ Global well-posedness of the scaled system ] Let $a>0$, there exist a constant $c_1$ sufficiently small independent of $\eps$, such that for any initial data $(u^\eps_0,v^\eps_0,b^\eps_0,c^\eps_0) = (U^\eps_0, B^\eps_0)$ satisfying 
\begin{align}\label{u0v0b0c0}
\Vert e^{a|D_x|} (u_0,\eps v_0) \Vert_{\mathcal{B}^\f12} + \Vert e^{a|D_x|} (b_0,\eps c_0) \Vert_{\mathcal{B}^\f12} \leq c_1 a,
\end{align}
then the system \eqref{eq:hydroPE} has a unique global solution $(U^\eps,B^\eps)$ so that 
 \begin{align}\label{ubvc}
&\Vert e^{\mathcal{R}t}(u_\varphi,\eps v_\varphi)\Vert_{\tilde{L}^\infty(\mathbb{R}^+,\mathcal{B}^\f12)} + \Vert e^{\mathcal{R}t}(b_\varphi,\eps c_\varphi)\Vert_{\tilde{L}^\infty(\mathbb{R}^+,\mathcal{B}^\f12)} + \Vert e^{\mathcal{R}t}\pa_y (u_\varphi,\eps v_\varphi)\Vert_{\tilde{L}^2(\mathbb{R}^+,\mathcal{B}^\f12)} \notag \\ & +\Vert e^{\mathcal{R}t}\pa_y (b_\varphi,\eps c_\varphi)\Vert_{\tilde{L}^2(\mathbb{R}^+,\mathcal{B}^\f12)} + \eps \Vert e^{\mathcal{R}t}\pa_x (u_\varphi,\eps v_\varphi)\Vert_{\tilde{L}^2(\mathbb{R}^+,\mathcal{B}^\f12)} + \eps \Vert e^{\mathcal{R}t}\pa_x (b_\varphi,\eps c_\varphi)\Vert_{\tilde{L}^2(\mathbb{R}^+,\mathcal{B}^\f12)} \\  &\leq C\left(\Vert e^{a|D_x|} (u_0,\eps v_0) \Vert_{\mathcal{B}^\f12} + \Vert e^{a|D_x|} (b_0,\eps c_0) \Vert_{\mathcal{B}^\f12} \right), \notag
\end{align} 
where $(u_\varphi,v_\varphi)$ and $(b_\varphi,c_\varphi)$ will given also by \eqref{eq:AnPhi}.
\end{theorem}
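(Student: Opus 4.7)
The plan is to run an analytic-in-$x$ Chemin--Lerner energy estimate on the weighted system parallel to the scheme used for Theorem \ref{th:1}, with the additional bookkeeping that every constant be $\eps$-independent and every power of $\eps$ that appears does so with a non-negative exponent. Introduce the phase $\phi(t,\xi)=(a-\lambda\theta(t))|\xi|$ and the analyticity-weighted unknowns $u^\eps_\phi$, $\eps v^\eps_\phi$, $b^\eps_\phi$, $\eps c^\eps_\phi$; under the bootstrap assumption $\lambda\theta(t)\le a/2$ on $[0,T^{*}]$ the phase remains non-negative, so the maps $f\mapsto f_\phi$ are $L^2$-bounded. Applying $e^{\phi(t,D_x)}$ to each line of \eqref{eq:hydroPE} produces the additional ``good'' term $\dot\theta(t)|D_x|f_\phi$, whose time integral will absorb the tangential-derivative loss coming from the Prandtl-type nonlinearities $v^\eps\dd_y u^\eps$ and $c^\eps\dd_y b^\eps$. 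The weight $e^{\mathcal{R}t}$ is inserted so that after invoking the Poincar\'e inequality on the strip, justified by the Dirichlet boundary conditions, it cancels against a fraction of the vertical dissipation $\|\dd_y f_\phi\|_{L^2}^2$.

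Next I would carry out the dyadic $L^2$ estimates: apply $\Delta_q^h$, pair in $L^2(\cS)$ with $e^{2\mathcal{R}t}\Delta_q^h f_\phi$ for $f\in\{u^\eps,\eps v^\eps,b^\eps,\eps c^\eps\}$, and integrate in time. The dissipation contributes $\|\dd_y\Delta_q^h f_\phi\|^2_{L^2_t L^2}+\eps^2\|\dd_x\Delta_q^h f_\phi\|^2_{L^2_t L^2}$, the analytic weight contributes $\int_0^t\dot\theta\,2^q\|\Delta_q^h f_\phi\|_{L^2}^2\,dt'$, and the Poincar\'e step supplies a prefactor $2\mathcal{R}$ that compensates the time derivative of $e^{2\mathcal{R}t}$. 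Taking square roots, multiplying by $2^{q/2}$ and summing over $q\in\ZZ$ recovers exactly the six norms on the left-hand side of \eqref{ubvc}. The pressure is recovered from the second line of \eqref{eq:hydroPE}, which reads $\dd_y p^\eps=\eps^2 G^\eps$ with $G^\eps$ bilinear in the unknowns; integrating in $y$ together with the $y$-average of the first line and the compatibility condition $\int_0^1 u_0\,dy=0$ identifies $\dd_x p^\eps$ without losing any factor of $\eps$.

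The core work is the nonlinear estimates. For each quadratic interaction I would use the submultiplicativity $e^{\phi(t,\xi)}\le e^{\phi(t,\xi-\eta)}e^{\phi(t,\eta)}$ together with the horizontal Bony decomposition \eqref{eq:Bony} and Lemma \ref{lem:Bernstein}. The tangential products $u^\eps\dd_x u^\eps$, $b^\eps\dd_x u^\eps$, $u^\eps\dd_x b^\eps$ and $b^\eps\dd_x b^\eps$ fall under standard paraproduct estimates in $\Bcal^{1/2}\times\tilde L^2_t(\Bcal^{1/2})$; the vertical interactions are reduced by using the divergence-free relations $\dd_x u^\eps+\dd_y v^\eps=\dd_x b^\eps+\dd_y c^\eps=0$ to integrate by parts in $y$; the genuinely bad terms $v^\eps\dd_y u^\eps$ and $c^\eps\dd_y b^\eps$ are rewritten via $v^\eps=-\int_0^y\dd_x u^\eps\,dy'$ (and the analogue for $c^\eps$), turning the half-derivative loss into one tangential derivative that is absorbed by $\dot\theta|D_x|$ once $\lambda$ is chosen sufficiently large. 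Collecting the estimates yields an inequality of the schematic form $\mathcal{E}(t)\le C\mathcal{E}(0)+C\lambda^{-1/2}\mathcal{E}(t)\mathcal{D}(t)$, where $\mathcal{E}(t)$ gathers the $\tilde L^\infty_t(\Bcal^{1/2})$ norms and $\mathcal{D}(t)$ the dissipation and analytic-smoothing norms; choosing $\dot\theta(t)$ proportional to $\|\dd_x(u^\eps_\phi,b^\eps_\phi)(t)\|_{\Bcal^{1/2}}$ and invoking the smallness of $c_1 a$ closes the bootstrap, giving $T^{*}=+\infty$. Uniqueness follows by running the same estimate on the difference of two solutions.

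The main obstacle is the Prandtl-type pair $(v^\eps\dd_y u^\eps,\,c^\eps\dd_y b^\eps)$: because $v^\eps$ and $c^\eps$ are determined non-locally from $u^\eps$ and $b^\eps$ through the divergence relations, one tangential derivative is missing with respect to what the pure Navier--Stokes setting would require, and the argument succeeds only because the analytic weight supplies the missing half-power of $|D_x|$ on each factor. Keeping every bilinear estimate uniform in $\eps$, in particular the cross-terms coming from $\eps c^\eps\dd_y v^\eps$ and the $\eps^2$ corrections hidden in the pressure, is the delicate accounting that distinguishes Theorem \ref{th:2} from Theorem \ref{th:1}.
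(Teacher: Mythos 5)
Your overall architecture (analytic weight $e^{\phi(t,D_x)}$, dyadic $L^2$ energy estimates on $(u_\varphi,\eps v_\varphi,b_\varphi,\eps c_\varphi)$, Bony decomposition, the relation $v^\eps=-\int_0^y\partial_x u^\eps\,dy'$, the $e^{\Rcal t}$ weight absorbed by Poincar\'e into the vertical dissipation, and a bootstrap on $T^\star$) is exactly the paper's scheme. But there is one genuine gap: your choice of the analyticity-loss rate, $\dot\theta(t)\sim \Vert \partial_x(u^\eps_\phi,b^\eps_\phi)(t)\Vert_{\Bcal^{1/2}}$, does not close the argument. To keep the analyticity band positive you must show $\theta(t)\leq a/\lambda$ for all $t$, i.e.\ you need an \emph{a priori, $\eps$-uniform, global-in-time} bound on $\int_0^\infty \Vert \partial_x(u^\eps_\phi,b^\eps_\phi)\Vert_{\Bcal^{1/2}}\,dt$. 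The energy estimate only yields $\eps\Vert e^{\Rcal t}\partial_x(u_\varphi,b_\varphi)\Vert_{\tilde L^2_t(\Bcal^{1/2})}\lesssim$ data (the horizontal dissipation carries the factor $\eps^2$), so the tangential derivative is only controlled with a prefactor $\eps^{-1}$; and the analytic-smoothing term $\lambda\int_0^t\dot\theta\,\Vert\,|D_x|^{1/2}\cdot\Vert^2$ cannot be used to bound $\int_0^t\dot\theta$ itself without a lower bound on the solution — that is circular. This is precisely why the paper takes $\dot\tau(t)=\Vert\partial_y u^\eps_\varphi(t)\Vert_{\Bcal^{1/2}}+\eps\Vert\partial_y v^\eps_\varphi(t)\Vert_{\Bcal^{1/2}}$ (see \eqref{band}): this quantity is integrable on $\RR_+$ by Cauchy--Schwarz against $e^{-\Rcal t}$ together with the vertical-dissipation bound $\Vert e^{\Rcal t}\partial_y(u_\varphi,\eps v_\varphi)\Vert_{\tilde L^2_t(\Bcal^{1/2})}\lesssim c_1a$, which is how $\tau(\infty)<a/(2\lambda)$ and hence $T^\star=+\infty$ is obtained. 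Moreover the paraproduct lemmas (Lemmas \ref{lem C+B+A}, \ref{lem:ApaxA}, \ref{lem:vvv}, \ref{lem:cbb}) are calibrated to exactly this weight: the $L^\infty_y$ bounds of the low-frequency factors are produced by Poincar\'e from $\partial_y$-norms, and the resulting $\tilde L^2_{t,\dot\tau}(\Bcal^{s+\frac12})$ norms then match the $\lambda\dot\tau|D_x|$ smoothing. With your tangential choice of $\dot\theta$ neither the absorption step nor the global control of $\theta$ goes through, so the bootstrap cannot be closed uniformly in $\eps$.

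Two smaller remarks. First, your detour through an explicit formula for $p^\eps$ (integrating $\partial_y p^\eps=\eps^2 G^\eps$ and using $\int_0^1 u_0\,dy=0$) is unnecessary and relies on a compatibility condition that Theorem \ref{th:2} does not assume; in the scaled system the pressure simply drops out of the energy identity, since $\langle\Delta_q^h\nabla p_\varphi,\Delta_q^h(u_\varphi,v_\varphi)\rangle_{L^2}=0$ by the divergence-free condition and the Dirichlet boundary values. Second, the closing inequality is not of the form $\mathcal E\le C\mathcal E(0)+C\lambda^{-1/2}\mathcal E\mathcal D$: the quadratic terms are all bounded by $\tilde L^2_{t,\dot\tau}(\Bcal^{1})$-type norms and are absorbed by choosing $\lambda$ large (here $\lambda=2C^2$), while the smallness $c_1a$ of the data is what keeps both $\Vert u_\varphi\Vert_{\Bcal^{1/2}}$ small and $\tau(t)$ below $a/(2\lambda)$ in the continuation argument.
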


The main idea to prove the above two theorems is to control the new unknown $(u_\phi,b_\phi)$ defined by \eqref{eq:AnPhi}, where $u$ is the horizontal velocity and $b$ is the horizontal component of the fluid's magnetic field. $u_\phi $ is a weighted function of $u$ in the dual Fourier variable with an exponential function of $(a - \lambda \theta(t))|\xi|$ (also $b_\phi$ is a weighted of $b$ in the dual Fourier), which is equivalent to the analytically of the solution in the horizontal variable. By the classical Cauchy-Kowalewskaya theorem, one expects the radius of the analyticity of the solutions decay in time and so the exponent, which corresponds to the width of the analyticity strip, is allowed to vary with time. Using energy estimates on the equation satisfied by $(u_\phi,b_\phi) $ and the control of the quantity which describes " the loss of the analyticity radius ", we shall show that the analyticity strip persists globally in time. Consequently, our result is a global Cauchy-Kowalewskaya  type theorem.

The third result in concern the study of the convergence from the scaled anisotropic MHD system \eqref{eq:hydroPE} to the limit system  \eqref{eq:hydrolimit}, so in this theorem we proved that the convergence is globally in time. 
\begin{theorem} \label{th:3} [ Convergence to the MHD limit system ]
Let $a>0$, and $(u_0^\eps,v_0^\eps,b_0^\eps,c_0^\eps)$, satisfying \eqref{u0v0b0c0}. Let $u_0$ and $b_0$ satisfying $e^{a|D|_x} u_0 \in \mathcal{B}^\f12 \cap \mathcal{B}^\f52$, $e^{a|D|_x} \pa_y u_0 \in \mathcal{B}^\f32$, (the some thing for $b_0$) and there holds the  compatibility condition $ \int_0^1 (u_0,b_0) dy =0$ and 
\begin{align*}
\Vert e^{a|D_x|} u_0 \Vert_{\mathcal{B}^\f12} + \Vert e^{a|D_x|} b_0 \Vert_{\mathcal{B}^\f12} \leq \frac{c_2 a}{1+ \Vert e^{a|D_x|} (u_0,b_0) \Vert_{\mathcal{B}^\f32}},
\end{align*}
for some $c_2$ sufficiently small independent of $\eps$, then we have 
\begin{align} 
&\Vert (\Psi^1_\Theta,\eps \Psi^2_\Theta) \Vert_{\tilde{L}^\infty_t(\mathcal{B}^\f12)}+ \norm{(\Phi_{\Theta}^1,\eps \Phi_\Theta^2)(t)}_{L^\infty_t (\mathcal{B}^\f12)} \Vert \pa_y (\Psi^1_\Theta,\eps \Psi^2_\Theta) \Vert_{\tilde{L}^2_t(\mathcal{B}^\f12)} \notag \\ &  +\eps \Vert (\Psi^1_\Theta,\eps \Psi^2_\Theta) \Vert_{\tilde{L}^2_t(\mathcal{B}^\f32)}+ \Vert \pa_y (\Phi^1_\Theta,\eps \Phi^2_\Theta) \Vert_{\tilde{L}^2_t(\mathcal{B}^\f12)} +\eps \Vert (\Phi^1_\Theta,\eps \Phi^2_\Theta) \Vert_{\tilde{L}^2_t(\mathcal{B}^\f32)}  \\
&\leq C\left( \Vert e^{a|D_x|}(u_0^\eps-u_0, \eps(v_0^\eps  -v_0)) \Vert_{\mathcal{B}^\f12} + \Vert e^{a|D_x|}(b_0^\eps-b_0, \eps(c_0^\eps  -c_0)) \Vert_{\mathcal{B}^\f12} +M\eps \right). \notag
\end{align}
where 
\begin{align} \label{eq:psiphiqbis}
	\left\{
	\begin{aligned}
		(\Psi^{1,\eps},\Psi^{2,\eps},q^\eps) &= (u^\eps - u,v^\eps - v,p^\eps-p ), \\
		(\Phi^{1,\eps},\Phi^{2,\eps}) &= (b^\eps -b,c^\eps - c) .
	\end{aligned}
		\right.
\end{align}
and $v_0$ is determined from $u_0$ via $\pa_x u +\pa_y v =0 $ and $v_0|_{y=0} = v_0|_{y=1}= 0$, and $(\Psi_\Theta^1,\eps\Psi_\Theta^2)$, $(\Phi_\Theta^1,\eps\Phi_\Theta^2)$ will be given by \eqref{analytiqueff}.
\end{theorem}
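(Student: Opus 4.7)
The approach is to analyse the perturbation system satisfied by the differences $(\Psi^{1,\eps}, \Psi^{2,\eps}, q^\eps)$ and $(\Phi^{1,\eps}, \Phi^{2,\eps})$ defined in \eqref{eq:psiphiqbis}, with $q^\eps = p^\eps - p$. Subtracting \eqref{eq:hydrolimit} from \eqref{eq:hydroPE} and using the identity $u^\eps \pa_x u^\eps - u\pa_x u = u^\eps \pa_x \Psi^{1} + \Psi^{1}\pa_x u$ (and analogous splittings for all quadratic terms) produces linear transport--diffusion equations of the schematic form
\begin{align*}
&\pa_t \Psi^{1} + u^\eps\pa_x \Psi^{1} + v^\eps \pa_y \Psi^{1} + \Psi^{1}\pa_x u + \Psi^{2}\pa_y u - \eps^2\pa_x^2 \Psi^{1} - \pa_y^2 \Psi^{1} + \pa_x q^\eps \\
&\qquad = b^\eps\pa_x \Phi^{1} + c^\eps \pa_y \Phi^{1} + \Phi^{1}\pa_x b + \Phi^{2}\pa_y b + \eps^2\pa_x^2 u,
\end{align*}
an analogous equation for $\Phi^{1}$ with the roles of $(\Psi,u)$ and $(\Phi,b)$ symmetrically exchanged on the magnetic side, and a near-hydrostatic relation $\pa_y q^\eps = \eps^2 R(t,x,y)$ coming from the second momentum equation of \eqref{eq:hydroPE} (in which $\pa_y p^\eps$ is balanced only by $\eps^2$-sized terms) together with the limit identity $\pa_y p = 0$. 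The source terms split into (i) linear terms in $(\Psi^{i},\Phi^{i})$ whose coefficients are controlled by the a priori bounds \eqref{ub} and \eqref{ubvc}, and (ii) explicit $\eps^2$-forcings $\eps^2\pa_x^2 u$, $\eps^2\pa_x^2 b$ and the $\eps^2 R$ remainder in the pressure relation, which will ultimately produce the $M\eps$ contribution on the right-hand side of the target estimate.

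The strategy is then the same as in the proofs of Theorems \ref{th:1} and \ref{th:2}. Introduce an analytic weight $e^{\Phi(t,D_x)}$ with $\Phi(t,\xi) = (a - \lam \Theta(t))\abs{\xi}$, where $\Theta(t)$ is chosen to dominate both the loss-of-analyticity function $\theta$ from Theorem \ref{th:1} (for $u,b$) and the analogous function from Theorem \ref{th:2} (for $u^\eps, b^\eps$), so that all products appearing on the right-hand sides remain in the analytic class controlled by $\Phi$. Apply $\Delta_q^h e^{\Phi(t,D_x)}$ to each equation, take the $L^2$-inner product with the correspondingly weighted and localised difference, use integration by parts in $y$ together with the Dirichlet conditions to handle $-\pa_y^2$, integrate by parts in $x$ to control $-\eps^2\pa_x^2$, and collect from the time-differentiation of the weight the damping term $\dot\Theta(t)\Vert \abs{D_x}^\f12 \Psi^{1}_\Theta\Vert_{L^2}^2$. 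Summing over $q\in\ZZ$ with weights $2^{q/2}$ yields a coupled integral inequality for $\Vert(\Psi^{1}_\Theta,\Phi^{1}_\Theta)\Vert_{\tilde L^\infty_t(\mathcal{B}^\f12)}$, the parabolic gains $\Vert\pa_y(\Psi^{1}_\Theta,\Phi^{1}_\Theta)\Vert_{\tilde L^2_t(\mathcal{B}^\f12)}$ and $\eps\Vert\pa_x(\Psi^{1}_\Theta,\Phi^{1}_\Theta)\Vert_{\tilde L^2_t(\mathcal{B}^\f12)}$, and the analogous quantities for $\eps(\Psi^{2}_\Theta,\Phi^{2}_\Theta)$.

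Each nonlinear block is then estimated via Bony's decomposition \eqref{eq:Bony} exactly as in the earlier theorems. The dangerous boundary-layer terms $\Psi^{2}\pa_y u$, $\Phi^{2}\pa_y b$, $\Phi^{2}\pa_y u$, $\Psi^{2}\pa_y b$, which would naively lose one $\pa_y$-derivative, are treated by invoking the divergence-free identities $\pa_y\Psi^{2} = -\pa_x \Psi^{1}$ and $\pa_y \Phi^{2} = -\pa_x \Phi^{1}$ and the Dirichlet conditions at $y=0,1$ to trade the $\pa_y$ for a $\pa_x$ via integration by parts, then paying the extra horizontal derivative with the analytic gain $\dot\Theta(t)\abs{D_x}^\f12$. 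The pressure term $\pa_x q^\eps$ is eliminated by integrating the $\Psi^{1}$-equation in $y\in[0,1]$ and using that the compatibility condition $\int_0^1(u_0,b_0)\,dy=0$, together with the evolution and Dirichlet conditions, propagates $\int_0^1(\Psi^{1},\Phi^{1})\,dy=0$ --- precisely the mechanism used for Theorem \ref{th:1}. The $\eps^2$-forcings $\eps^2\pa_x^2 u$, $\eps^2\pa_x^2 b$ and the remainder $\eps^2 R$ are bounded in $\tilde L^2_t(\mathcal{B}^\f12)$ by propagating the higher analytic regularities $e^{a\abs{D_x}}u_0, e^{a\abs{D_x}}b_0 \in \mathcal{B}^\f52$ and $e^{a\abs{D_x}}\pa_y u_0, e^{a\abs{D_x}}\pa_y b_0 \in \mathcal{B}^\f32$ granted by the hypothesis, via an analogue of Theorem \ref{th:1} at higher regularity; this produces a quantity of order $M\eps$ with $M$ depending on these higher norms and independent of $\eps$.

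Closing is done by the same continuity/bootstrap argument on $\Theta(t)$ as in Theorems \ref{th:1}--\ref{th:2}: for $\lam$ large and $c_2$ small enough, all nonlinear contributions are absorbed into the parabolic gains and the analytic damping $\int_0^t \dot\Theta(s)\Vert \abs{D_x}^\f12(\Psi^{1}_\Theta,\Phi^{1}_\Theta)(s)\Vert_{\mathcal{B}^\f12}\,ds$, leaving only the initial-data difference and the $M\eps$ forcing on the right. The main obstacle, as for Theorem \ref{th:1}, will be the simultaneous treatment of the pressure $q^\eps$ and the loss-of-derivative boundary-layer nonlinearities; the new subtlety here is that $\pa_y q^\eps$ is only near-hydrostatic up to $O(\eps^2)$, so the elimination of the pressure must be done at the level of the horizontal average with the $\eps^2$-residue absorbed into the explicit source contribution, while still remaining uniform in $\eps$. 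Once this delicate balance is achieved, the bootstrap closes and yields the claimed inequality.
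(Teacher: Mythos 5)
Your overall scheme is the same as the paper's: form the difference system for $(\Psi^{1,\eps},\Psi^{2,\eps},q^\eps,\Phi^{1,\eps},\Phi^{2,\eps})$, weight it with an analytic multiplier whose loss-rate dominates the rates of both Theorem \ref{th:1} and Theorem \ref{th:2}, run dyadic energy estimates with Bony decomposition at the $\mathcal{B}^{\f12}$ level, feed in the higher-regularity propagation of the limit solution (this is exactly Proposition \ref{prop}) to turn the $\eps^2$-forcings $\eps^2\pa_x^2u$, $\eps^2\pa_x^2b$ and the whole $O(\eps^2)$ vertical remainders into the $M\eps$ term, and absorb the remaining terms into the analytic damping by taking the loss-rate coefficient large (the paper takes $\mu=CM$ with $\dot\eta$ built from $\pa_y u_\phi$ and $(\pa_yu^\eps_\varphi,\eps\pa_xu^\eps_\varphi)$). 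Your treatment of the terms $\Psi^{2}\pa_yu$, $\Phi^{2}\pa_yb$, etc.\ differs only cosmetically from the paper (which uses $\Psi^{2}=-\int_0^y\pa_x\Psi^{1}$ plus Bernstein rather than an integration by parts in $y$), and both pay the lost horizontal derivative with the weighted space $\tilde{L}^2_{t,\dot\eta(t)}(\mathcal{B}^1)$.

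There is, however, a genuine gap in your treatment of the pressure. You propose to eliminate $\pa_xq^\eps$ by integrating the $\Psi^{1}$-equation over $y\in[0,1]$ and propagating $\int_0^1\Psi^{1}\,dy=0$, ``precisely the mechanism used for Theorem \ref{th:1}.'' That mechanism relies on the pressure being $y$-independent, which is exactly what fails here: $\pa_yq^\eps=R^{2,\eps}$ is only $O(\eps^2)$, not zero, so knowing the horizontal average of the equation does not control the energy pairing $\psca{\Delta_q^h\pa_xq^\eps_\Theta,\Delta_q^h\Psi^{1}_\Theta}_{L^2}$; you acknowledge an ``$\eps^2$-residue'' but leave the ``delicate balance'' unresolved, and carrying it out would force you to decompose $q^\eps$ into its vertical average plus a corrector and estimate $\pa_x$ of that corrector (hence $\eps^2\pa_x$ of $\pa_tv$, $\pa_y^2v$, $u^\eps\pa_xv^\eps$, \dots), which you never set up. The missing observation, and the route the paper takes, is much simpler: keep the $\eps^2$-scaled vertical momentum equation inside the energy functional (i.e.\ work with the unknown $(\Psi^{1}_\Theta,\eps\Psi^{2}_\Theta)$, pairing the first equation with $\Psi^{1}_\Theta$ and the second with $\Psi^{2}_\Theta$); then the full gradient $(\pa_xq_\Theta,\pa_yq_\Theta)$ is tested against the divergence-free field $(\Psi^{1}_\Theta,\Psi^{2}_\Theta)$ which vanishes at $y=0,1$, so after one integration by parts
\begin{equation*}
\psca{\Delta_q^h\nabla q_\Theta,\Delta_q^h(\Psi^{1}_\Theta,\Psi^{2}_\Theta)}_{L^2}
=-\psca{\Delta_q^h q_\Theta,\Delta_q^h(\pa_x\Psi^{1}_\Theta+\pa_y\Psi^{2}_\Theta)}_{L^2}=0,
\end{equation*}
identically, uniformly in $\eps$, with no compatibility condition and no residue to absorb. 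Without this step (or a fully worked-out pressure-corrector estimate), your energy inequality does not close as written, since the pressure term is of the same order as the quantities you are trying to bound.
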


\section{Global well posedness of the limit system}
The goal of this paper is to prove the global well posedness of the limit system of the MHD equation, we remark that the local smooth solution of the limit system follow a standard parabolic regularisation method similar to the MHD system, First we remark that the Dirichlet boundary condition $$ (u,v)_{/y=0} = (u,v)_{/y=1} = (b,c)_{/y=0} =(b,c)_{/y=1} =0,$$ and the incompressible condition $\pa_x u +\pa_y v =0$ and $\pa_x b + \pa_y c =0$ imply that :
\begin{align}\label{vv}
    v(t,x,y) &= \int_0^y \pa_y v(t,x,s) ds = - \int_0^y \pa_x u(t,x,s)ds \\
    c(t,x,y) &= \int_0^y \pa_y c(t,x,s) ds = - \int_0^y \pa_x b(t,x,s)ds.
\end{align}
We want now to find the equation for the pressure. Due to the Dirichlet boundary condition $(u,v,b,c)_{/y=0} = (u,v,b,c)_{/y=1} =0$ we deduce from the incompressibility condition $\pa_xu+\pa_yv=0$ and $\pa_xb+\pa_y c=0$ that
\begin{align*}
	\pa_x\int_0^1u(t,x,y)\, dy = -\int_0^1 \dd_y v(t,x,y)\, dy = v(t,x,1) - v(t,x,0) = 0 \\
	\pa_x\int_0^1b(t,x,y)\, dy = -\int_0^1 \dd_y c(t,x,y)\, dy = c(t,x,1) - c(t,x,0) = 0.
\end{align*}
Due to the compatibility condition $\pa_x \int_0^1 (u_0,b_0) dy = 0$ and the fact that $u(t,x,y) \rightarrow 0$ and $b(t,x,y) \rightarrow 0$ as $|x| \rightarrow 0$, ensure that 
 $$ \int_0^1 u(t,x,y) dy = 0 \text{ \ \ and \ \ } \int_0^1 b(t,x,y) dy = 0. $$ 
 Then by integrating the equations $\p_tu+u\p_x u+v\p_yu-\p_y^2u+\p_xp=b\pa_xb + c\pa_yb$ and $\p_tb+u\p_x b+v\p_yb-\p_y^2b = b\pa_xu + c\pa_yu$, for $y\in[0,1]$ and using the fact that $\pa_y p=0$, we obtain 
 $$ \pa_x p = \pa_y u(t,x,1) - \pa_yu(t,x,0) - \f12 \pa_x \int_0^1 (u)^2(t,x,y) dy + \f12 \pa_x \int_0^1 (b)^2(t,x,y) dy. $$ 
 
 \begin{remark}
We consider the equation which represent the magnetic field $B=(b,c)$,
$$ \pa_tB + U\nabla B - B\nabla U - \pa_y^2B =0 ,$$  and taking the trace of this equation on the boundary we find that $\pa_y^2 B=0$ for a smooth solutions of $\eqref{eq:hydrolimit}$. Using that $B$ satisfies the divergence free condition $\pa_xb +\pa_yc =0$, we obtain $$ \pa_x(\pa_y b) +\pa_y^2 c=0. $$ While $\pa_y^2c = 0$ on the boundaries of the strip, then $\pa_x(\pa_yb)=0$ also on the boundary. This implies that that $(\pa_y b)(t,x,0) = m(t)$ and using the fact that $\p_yb(t,x,y) \rightarrow 0$ as $|x| \rightarrow 0$, then we deduce that $(\pa_y b)(t,x,0)=0$. Using a similar argument we obtain  $(\pa_y b)(t,x,1) =0$.
 \end{remark}
 So, we have obtained
 $$ \pa_yb(t,x,1)=\pa_yb(t,x,0) = 0,$$
 for smooth solutions of $\eqref{eq:hydrolimit}$.
 \begin{remark}
 Using the previous boundary condition, we obtain that the average $\int_0^1 b(t,x,y )dy=0$ for all $t\geq 0$. Indeed, we have 
 \begin{align*}
     &\p_t \int_0^1b(t,x,y) dy+\int_0^1 u\partial_{x} bdy +\int_0^1 v\partial_{y} b dy - \int_0^1 \pa_y^2 b(t,x,y) dy- \int_0^1 b\pa_x u dy- \int_0^1 c \pa_y udy\\
     &=\p_t\int_0^1 b(t,xy)dy  -\pa_yb(t,x,1)+\pa_yb(t,x,0)=0
      \end{align*}
 \end{remark}
% écrire l'équation satisfaite par la pression

Let $(u_\phi, v_\phi, b_\phi, c_\phi)$ be define as in \eqref{eq:AnPhi} and \eqref{eq:AnTheta}. By an direct calculations from the limit system \eqref{eq:hydrolimit} show that $(u_\phi, v_\phi, b_\phi, c_\phi)$ verify the following system

\begin{equation} \label{S1eq8}
	\left\{ \;
	\begin{aligned}
		&\displaystyle \p_tu_{\phi}+ \lambda \dot{\theta}(t)|D_x| u_{\phi}+(u\p_x u)_{\phi} + (v\p_yu)_{\phi}-\p_y^2u_{\phi} + \p_xp_{\phi}=(b\p_x b)_{\phi} + (c\p_yb)_{\phi} \\
		&\displaystyle\p_yp_{\phi}= 0\\
		&\p_t b_{\phi}+\lambda \dot{\theta}(t)|D_x| b_{\phi}+(u\pa_x b)_{\phi}+(v\pa_y b)_{\phi}-\pa_y^2 b_{\phi}=(b\p_x u)_{\phi} + (c\p_yu)_{\phi}\\
		&\displaystyle \p_xu_{\phi}+\p_yv_{\phi}=0,\\
		&\displaystyle \p_xb_{\phi}+\p_yc_{\phi}=0,\\
		&\displaystyle u_{\phi}|_{t=0}=u_0, \\
		&\displaystyle b_{\phi}|_{t=0}=b_0,
	\end{aligned}
	\right.
\end{equation}

where $|D_x|$ denotes the Fourier multiplier of symbol $|\xi|$. In what follows, we recall that we use ``$C$'' to denote a generic positive constant which can change from line to line.

Applying the dyadic operator $\DD^h_q$ to the system \eqref{S1eq8}, then taking the $L^2(\Scal)$ scalar product of the first and the third equations of the obtained system with $\Delta_q^h u_{\phi}$ and $\Delta_q^h b_{\phi}$ respectively, we get

\begin{multline*}
	  \psca{ \Delta_q^h \pa_t  u_{\phi}, \Delta_q^h u_{\phi} }_{L^2} + \lambda \dot{\theta}(t) \psca{|D_x| \Delta_q^h u_{\phi},\Delta_q^h u_{\phi}}_{L^2} - \psca{\Delta_q^h \pa_y^2 u_{\phi},\Delta_q^h u_\phi }_{L^2}+ \psca{\Delta_q^h \pa_x p_\phi, \Delta_q^h u_\phi}_{L^2}\\
	= -\psca{\Delta_q^h (u\p_x u)_{\phi}, \Delta_q^h u_{\phi})}_{L^2} - \psca{\Delta_q^h (v\p_y u)_{\phi}, \Delta_q^h u_{\phi}}_{L^2} + \psca{\Delta_q^h (b\p_x b + c\p_yb)_{\phi},\Delta_q^h u_\phi},  \\
\end{multline*}
and
\begin{multline*}
 \psca{ \Delta_q^h \pa_t  b_{\phi}, \Delta_q^h b_{\phi} }_{L^2} + \lambda \dot{\theta}(t) \psca{|D_x| \Delta_q^h b_{\phi},\Delta_q^h b_{\phi}}_{L^2} - \psca{ \Delta_q^h \pa_y^2 b_{\phi},\Delta_q^h b_\phi }_{L^2}  \\
	= -\psca{\Delta_q^h (u\p_x b)_{\phi}, \Delta_q^h b_{\phi}}_{L^2} - \psca{\Delta_q^h (v\p_y b)_{\phi}, \Delta_q^h b_{\phi}}_{L^2} + \psca{\Delta_q^h(b\p_x u + c\p_yu)_{\phi}, \Delta_q^h b_\phi}. \hspace{1cm}
\end{multline*}

Thanks to the Dirichlet boundary condition and due to the free divergence of $U$ (its mean that $\divv U = \pa_xu +\pa_y v =0$ ), we get by using the integration by part that 
\begin{align*}
    \psca{\Delta_q^h \pa_x p_\phi, \Delta_q^h u_\phi}_{L^2} &= - \psca{\Delta_q^h p_\phi, \Delta_q^h \pa_x u_\phi}_{L^2}  \\
    & = \psca{\Delta_q^h p_\phi, \Delta_q^h \pa_y v_\phi}_{L^2} \\
    &= - \psca{\Delta_q^h \pa_y p_\phi, \Delta_q^h v_\phi}_{L^2} =0. \ \ \  (\text{ because } \pa_y p_\phi =0) 
\end{align*}
we recall that we have by integrating by part that 
\begin{align*}
    \psca{\Delta_q^h \pa_y^2 u_{\phi},\Delta_q^h u_\phi }_{L^2} & = - \psca{\Delta_q^h \pa_y u_{\phi},\Delta_q^h \pa_y u_\phi }_{L^2} = - \norm{\Delta_q^h \pa_y u_\phi}_{L^2}^2 \\
    \psca{ \Delta_q^h \pa_y^2 b_{\phi},\Delta_q^h b_\phi }_{L^2} & = -\psca{ \Delta_q^h \pa_y b_{\phi},\Delta_q^h \pa_y b_\phi }_{L^2} = - \norm{\Delta_q^h \pa_y b_\phi}_{L^2}^2
\end{align*}
we replace in the obtained estimate we get  
\begin{multline} \label{S4eq5}
	\frac{1}{2} \frac{d}{dt} \Vert \Delta_q^h u_{\phi}(t) \Vert_{L^2}^2 + \lambda \dot{\theta}(t) \norm{|D_x|^{\frac12} \Delta_q^h u_{\phi}}_{L^2}^2 + \Vert \Delta_q^h \pa_y u_{\phi}(t) \Vert_{L^2}^2 
	= -\psca{\Delta_q^h (u\p_x u)_{\phi}, \Delta_q^h u_{\phi})}_{L^2} \\ - \psca{\Delta_q^h (v\p_y u)_{\phi}, \Delta_q^h u_{\phi}}_{L^2} +  \psca{\Delta_q^h (b\p_x b)_{\phi},\Delta_q^h u_\phi} + \psca{\Delta_q^h ( c\p_yb)_{\phi},\Delta_q^h u_\phi},
\end{multline}
and
\begin{multline} \label{S4eq6}
	\frac{1}{2} \frac{d}{dt} \Vert \Delta_q^h b_{\phi}(t) \Vert_{L^2}^2 + \lambda \dot{\theta}(t) \norm{|D_x|^{\frac12} \Delta_q^h b_{\phi}}_{L^2}^2 + \Vert \Delta_q^h \pa_y b_{\phi}(t) \Vert_{L^2}^2 
	= -\psca{\Delta_q^h (u\p_x b)_{\phi}, \Delta_q^h b_{\phi}}_{L^2} \\ - \psca{\Delta_q^h (v\p_y b)_{\phi}, \Delta_q^h b_{\phi}}_{L^2} + \psca{\Delta_q^h(b\p_x u)_{\phi}, \Delta_q^h b_\phi} +\psca{\Delta_q^h( c\p_yu)_{\phi}, \Delta_q^h b_\phi}. \hspace{1cm}
\end{multline}
Multiplying \eqref{S4eq5} and \eqref{S4eq6} with $e^{2\Rcal t}$ and then integrating with respect to the time variable, we have
\begin{multline} \label{S4eq5bis}
	\norm{e^{\Rcal t} \Delta_q^h u_{\phi}(t)}_{L^\infty_t (L^2)}^2 + \lambda \int_0^t \dot{\theta}(t') \norm{e^{\Rcal t'} |D_x|^{\frac12} \Delta_q^h u_{\phi}}_{L^2}^2 dt' + \norm{e^{\Rcal t} \Delta_q^h \pa_y u_{\phi}(t)}_{L^2_t(L^2)}^2 \\
	= \norm{\Delta_q^h u_{\phi}(0)}_{L^2}^2 + I_1 + I_2 + I_3 + I_4, \hspace{2cm}
\end{multline}
and
\begin{multline} \label{S4eq6bis}
	\norm{e^{\Rcal t} \Delta_q^h b_{\phi}(t)}_{L^\infty_t (L^2)}^2 + \lambda \int_0^t \dot{\theta}(t') \norm{e^{\Rcal t} |D_x|^{\frac12} \Delta_q^h b_{\phi}}_{L^2}^2 dt' + \norm{e^{\Rcal t} \Delta_q^h \pa_y^2 b_{\phi}(t)}_{L^2_t(L^2)}^2\\
	= \norm{\Delta_q^h b_{\phi}(0)}_{L^2}^2 + D_1 + D_2+ D_3+D_4. \hspace{2.5cm}
\end{multline}

Next, by using the Lemmas \ref{lem C+B+A}, \ref{lem:ApaxA} and \ref{lem:cbb} yield that
\begin{align*}
	\abs{I_1} &= \abs{\int_0^t \psca{e^{\Rcal t'}\Delta_q^h (u\p_x u)_{\phi}, e^{\Rcal t'}\Delta_q^h u_{\phi})} dt'} \leq C d_q^2 2^{-2qs} \Vert e^{\mathcal{R}t} u_{\phi} \Vert_{\tilde{L}^2_{t,\dot{\theta}(t)}(\mathcal{B}^{s+\frac{1}{2}})}^2\\
	\abs{I_2} &= \abs{\int_0^t \psca{e^{\Rcal t'}\Delta_q^h (v\p_y u)_{\phi}, e^{\Rcal t'}\Delta_q^h u_{\phi}} dt'} \leq C d_q^2 2^{-2qs} \Vert e^{\mathcal{R}t} u_{\phi} \Vert_{\tilde{L}^2_{t,\dot{\theta}(t)}(\mathcal{B}^{s+\frac{1}{2}})}^2\\
	\abs{I_3} &= \abs{\int_0^t \psca{e^{\Rcal t'}\Delta_q^h (b\p_x b)_{\phi}, e^{\Rcal t'}\Delta_q^h u_{\phi}} dt'} \leq C d_q^2 2^{-2qs} \norm{e^{\Rcal t} u_\phi}_{\tilde{L}^2_{t,\dot{\theta}(t)}(\mathcal{B}^{s+\f12})} \norm{e^{\Rcal t} b_\phi}_{\tilde{L}^2_{t,\dot{\theta}(t)}(\mathcal{B}^{s+\f12})}\\
	\abs{I_4} &= \abs{\int_0^t \psca{e^{\Rcal t'}\Delta_q^h (c\p_y b)_{\phi}, e^{\Rcal t'}\Delta_q^h u_{\phi}} dt'} \leq C d_q^2 2^{-2qs} \norm{e^{\Rcal t} u_\phi}_{\tilde{L}^2_{t,\dot{\theta}(t)}(\mathcal{B}^{s+\f12})} \norm{e^{\Rcal t} b_\phi}_{\tilde{L}^2_{t,\dot{\theta}(t)}(\mathcal{B}^{s+\f12})}
\end{align*}
and
\begin{align*}
	\abs{D_1} &= \abs{\int_0^t \psca{e^{\Rcal t'}\Delta_q^h (u\p_x b)_{\phi}, e^{\Rcal t'}\Delta_q^h b_{\phi}} dt'} \leq C d_q^2 2^{-2qs} \Vert e^{\Rcal t} b_{\phi} \Vert_{\tilde{L}^2_{t,\dot{\theta}(t)}(\mathcal{B}^{s+\frac{1}{2}})}^2\\
	\abs{D_2} &= \abs{\int_0^t \psca{e^{\Rcal t'}\Delta_q^h (v\p_y b)_{\phi}, e^{\Rcal t'}\Delta_q^h b_{\phi}} dt'} \leq C d_q^2 2^{-2qs} \norm{e^{\Rcal t} u_{\phi}}_{\tilde{L}^2_{t(,\dot{\theta}(t))}(\mathcal{B}^{s+\f12})} \norm{e^{\Rcal t} b_\phi}_{\tilde{L}^2_{t(,\dot{\theta}(t))}(\mathcal{B}^{s+\f12})}\\
	\abs{D_3} &= \abs{\int_0^t \psca{e^{\Rcal t'}\Delta_q^h (b\p_x u)_{\phi}, e^{\Rcal t'}\Delta_q^h b_{\phi}} dt'} \leq C d_q^2 2^{-2qs} \norm{e^{\Rcal t} u_\phi}_{\tilde{L}^2_{t,\dot{\theta}(t)}(\mathcal{B}^{s+\f12})} \norm{e^{\Rcal t} b_\phi}_{\tilde{L}^2_{t,\dot{\theta}(t)}(\mathcal{B}^{s+\f12})}\\
	\abs{D_4} &= \abs{\int_0^t \psca{e^{\Rcal t'}\Delta_q^h (c\p_y u)_{\phi}, e^{\Rcal t'}\Delta_q^h b_{\phi}} dt'} \leq C d_q^2 2^{-2qs} \norm{e^{\Rcal t} b_\phi}_{\tilde{L}^2_{t,\dot{\theta}(t)}(\mathcal{B}^{s+\f12})}^2 .
\end{align*}

Multiplying \eqref{S4eq5bis} and \eqref{S4eq6bis} by $2^{2qs}$ and summing with respect to $q \in \ZZ$, we obtain 
\begin{multline} \label{S4eq5ter}
	\norm{e^{\Rcal t} u_{\phi}}_{\tilde{L}^\infty_t(\Bcal^s)}^2 + \lambda \norm{e^{\Rcal t} u_{\phi}}_{\tilde{L}^2_{t,\dot{\theta}(t)} (\Bcal^{s + \frac12})}^2 + \norm{e^{\Rcal t} \pa_y u_{\phi}}_{\tilde{L}^2_t(\Bcal^s)}^2 \\
	\leq \norm{u_{\phi}(0)}_{\Bcal^s}^2 + C \Vert e^{\mathcal{R}t} u_{\phi} \Vert_{\tilde{L}^2_{t,\dot{\theta}(t)}(\mathcal{B}^{s+\frac{1}{2}})}^2 + C \norm{e^{\Rcal t} (u,b)_\phi}_{\tilde{L}^2_{t,\dot{\theta}(t)}(\mathcal{B}^{s+\frac{1}{2}})}^2 ,
\end{multline}
and
\begin{multline} \label{S4eq6ter}
	\norm{e^{\Rcal t} b_{\phi}}_{\tilde{L}^\infty_t(\Bcal^s)}^2 + \lambda \norm{e^{\Rcal t} b_{\phi}}_{\tilde{L}^2_{t,\dot{\theta}(t)} (\Bcal^{s + \frac12})}^2 + \norm{e^{\Rcal t} \pa_y b_{\phi}}_{\tilde{L}^2_t(\Bcal^s)}^2\\
	\leq \norm{b_{\phi}(0)}_{\Bcal^s}^2 + C \Vert e^{\Rcal t} b_{\phi} \Vert_{\tilde{L}^2_{t,\dot{\theta}(t)}(\mathcal{B}^{s+\frac{1}{2}})}^2 + C \norm{e^{\Rcal t} u_\phi}_{\tilde{L}^2_{t,\dot{\theta}(t)}(\mathcal{B}^{s+\f12})} \norm{e^{\Rcal t} b_\phi}_{\tilde{L}^2_{t,\dot{\theta}(t)}(\mathcal{B}^{s+\f12})}.
\end{multline}

Thus, choosing 
\begin{equation}
	\label{eq:C} C \geq \max\set{4,\frac{1}{2\Rcal}},
\end{equation}
and taking the sum of \eqref{S4eq5ter} and \eqref{S4eq6ter}, we have
\begin{multline*}
	\norm{e^{\Rcal t} \pare{u_{\phi},b_{\phi}}}_{\tilde{L}^\infty_t(\Bcal^s)}^2 + \lambda \norm{e^{\Rcal t} \pare{u_{\phi},b_{\phi}}}_{\tilde{L}^2_{t,\dot{\theta}(t)} (\Bcal^{s + \frac12})}^2 + \frac{1}{2} \norm{e^{\Rcal t} \pa_y u_{\phi}}_{\tilde{L}^2_t(\Bcal^s)}^2 + \norm{e^{\Rcal t} \pa_y b_{\phi}}_{\tilde{L}^2_t(\Bcal^s)}^2\\
	\leq 2C \norm{e^{a\abs{D_x}} (u_0,b_0)}_{\mathcal{B}^{s}}^2 + 2C^2 \norm{e^{\mathcal{R}t} \pare{u_{\phi},b_\phi}}_{\tilde{L}^2_{t,\dot{\theta}(t)}(\mathcal{B}^{s+\frac{1}{2}})}^2.
\end{multline*}

We get the square root of our result equation we obtain

\begin{multline}
	\label{eq:u+T}
	\norm{e^{\Rcal t} \pare{u_{\phi},b_{\phi}}}_{\tilde{L}^\infty_t(\Bcal^s)} + \sqrt{\lambda} \norm{e^{\Rcal t} \pare{u_{\phi},b_{\phi}}}_{\tilde{L}^2_{t,\dot{\theta}(t)} (\Bcal^{s + \frac12})} + \frac{1}{2} \norm{e^{\Rcal t} \pa_y u_{\phi}}_{\tilde{L}^2_t(\Bcal^s)} + \norm{e^{\Rcal t} \pa_y b_{\phi}}_{\tilde{L}^2_t(\Bcal^s)}\\
	\leq C \norm{e^{a\abs{D_x}} (u_0,b_0)}_{\mathcal{B}^{s}} + 2C \norm{e^{\mathcal{R}t} \pare{u_{\phi},b_\phi}}_{\tilde{L}^2_{t,\dot{\theta}(t)}(\mathcal{B}^{s+\frac{1}{2}})}.
\end{multline}

We set 
\begin{equation} \label{eq:Tstar} 
	T^\star \stackrel{\tiny def}{=} \sup\set{t>0\ : \ \norm{u_{\phi}}_{\mathcal{B}^{\frac{1}{2}}}+ \norm{b_{\phi}}_{\mathcal{B}^{\frac{1}{2}}}  \leq \frac{1}{2C^2} \ \mbox{ and } \ \theta(t) \leq  \frac{a}{\lambda}}.
\end{equation}
We choose initial data such that 
\begin{align*}
	C\left( \Vert e^{a|D_x|} u_0 \Vert_{\mathcal{B}^{\frac12}} + \Vert e^{a|D_x|} b_0 \Vert_{\mathcal{B}^{\frac12}} \right) < \min\set{\frac{1}{2C^2},\frac{a}{2\lambda}},
\end{align*}
then, combining with the fact that $\theta(0) = 0$, we deduce that $T^\star > 0$. We choose now $\sqrt{\lambda} = 2 C$. For any $0 < t < T^\star$, we deduce from \eqref{eq:u+T} that
\begin{align}
	\label{eq:u+T02}
	\norm{e^{\Rcal t} \pare{u_{\phi},b_{\phi}}}_{\tilde{L}^\infty_t(\Bcal^s)} + \norm{e^{\Rcal t} \pa_y (u_{\phi},b_\phi) }_{\tilde{L}^2_t(\Bcal^s)} \leq C \norm{e^{a\abs{D_x}} (u_0,b_0)}_{\mathcal{B}^{s}}.
\end{align}
We then deduce from \eqref{eq:u+T02}, using \eqref{eq:C}, that, for any $0 < t < T^\star$, 
\begin{equation*}
	\norm{u_{\phi}}_{\mathcal{B}^{\frac{1}{2}}} \leq \Vert e^{\Rcal t} \pare{u_{\phi},b_{\phi}}\Vert_{\tilde{L}^\infty_t(\Bcal^s)} \leq C \norm{e^{a\abs{D_x}} (u_0,b_0)}_{\mathcal{B}^{s}} \leq C\left( \Vert e^{a|D_x|} u_0 \Vert_{\mathcal{B}^{\frac12}} + \Vert e^{a|D_x|} b_0 \Vert_{\mathcal{B}^{\frac12}} \right) < \frac{1}{2C^2}.
\end{equation*} 
Now, we recall that we already defined $\dot{\theta}(t) = \Vert \pa_y (u_{\phi},b_\phi)(t) \Vert_{\mathcal{B}^{\frac{1}{2}}}$ with $\theta(0) = 0$. Then, for any $0 < t < T^\star$, Inequality \eqref{eq:u+T02} yields
\begin{align*}
	\theta (t) &= \int_0^t \Vert \pa_y (u_{\phi},b_\phi)(t) \Vert_{\mathcal{B}^\f12} dt' \\ &\leq \int_0^t  e^{-\mathcal{R}t'} \Vert e^{\mathcal{R}t'} \pa_y (u_{\phi},b_\phi)(t') \Vert_{\mathcal{B}^\f12} dt' \\
	&\leq \left( \int_0^t  e^{-2\mathcal{R}t'} dt' \right)^{\f12} \left( \int_0^t \Vert e^{\mathcal{R}t'} \pa_y (u_{\phi},b_\phi)(t') \Vert_{\mathcal{B}^\f12}^2  dt' \right)^{\f12} \\
	&\leq C \left( \Vert e^{a|D_x|} u_0 \Vert_{\mathcal{B}^\f12} +  \Vert e^{a|D_x|}  b_{0} \Vert_{\mathcal{B}^\f12}\right) \\ &< \frac{a}{2\lambda}.
\end{align*}
A continuity argument implies that $T^\star = +\infty$ and we have \eqref{eq:u+T02} is valid for any $t \in \RR_+$.
\begin{proposition}\label{prop}
Let $a >0$. If $e^{a|D_x|}u_0 \in \mathcal{B}^{\f52} $, $ e^{a|D_x|} \pa_y u_0 \in \mathcal{B}^{\f32}$, $e^{a|D_x|}b_0 \in \mathcal{B}^{\f32}$ and 
\begin{align} \label{1.11} 
		\Vert e^{a|D_x|} u_0 \Vert_{\mathcal{B}^\f12} \leq  \frac{c_1 a}{1+  \Vert e^{a|D_x|} u_0 \Vert_{\mathcal{B}^\f32} + \Vert e^{a|D_x|}  b_0\Vert_{\mathcal{B}^\f32}} 
	\end{align}
	for some $c_1$ sufficiently small, then there exists a positive constant $C$ so that for $\lambda = C^2(1+  \Vert e^{a|D_x|}  u_0\Vert_{\mathcal{B}^\f32} +  \Vert e^{a|D_x|}  b_0\Vert_{\mathcal{B}^\f32}),$ and $1\leq s \leq \f52,$ one has 
	
		\begin{align}
		\label{eq:MhdlimitU}
	\norm{e^{\Rcal t} \pare{u_{\phi},b_{\phi}}}_{\tilde{L}^\infty(\mathbb{R}^+,\Bcal^s)} +  \norm{e^{\Rcal t} \pa_y (u_{\phi},b_\phi) }_{\tilde{L}^2(\mathbb{R}^+,\Bcal^s)} \leq 2C \norm{e^{a\abs{D_x}} (u_0,b_0)}_{\mathcal{B}^{s}}.
	\end{align}
	
	\begin{align}
		\label{eq:hydrolimitdtU}
		\Vert e^{\mathcal{R}t} (\pa_tu)_\phi \Vert_{\tilde{L}^2_t(\mathcal{B}^\f32)} +\Vert e^{\mathcal{R}t} (\pa_tb)_\phi \Vert_{\tilde{L}^2_t(\mathcal{B}^\f32)} &+\Vert e^{\mathcal{R}t}  \pa_y (u_\phi,b_\phi) \Vert_{\tilde{L}^\infty_t(\mathcal{B}^\f32)} \\ &\lesssim C\Big( \Vert e^{a|D_x|}\pa_y (u_0,b_0) \Vert_{\mathcal{B}^\f32} + \Vert e^{a|D_x|} (u_0,b_0) \Vert_{\mathcal{B}^\f52}  \Big). \notag
	\end{align}
\end{proposition}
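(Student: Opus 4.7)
\emph{Strategy for the first estimate \eqref{eq:MhdlimitU}.} The plan is to re-run the dyadic energy-estimate procedure that produced \eqref{eq:u+T02} at $s=\frac12$, but at the higher regularity $s\in[1,\frac52]$. I would apply $\Delta_q^h$ to the analytic system \eqref{S1eq8}, take the $L^2(\mathcal{S})$ scalar product of the $u$- and $b$-equations with $\Delta_q^h u_\phi$ and $\Delta_q^h b_\phi$, multiply by $e^{2\mathcal{R}t}$, integrate in time, weight by $2^{2qs}$ and sum over $q\in\mathbb{Z}$. The pressure contribution again vanishes through $\partial_y p_\phi=0$ together with $\partial_x u_\phi+\partial_y v_\phi=0$. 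The eight nonlinear contributions are controlled by the product Lemmas \ref{lem C+B+A}, \ref{lem:ApaxA} and \ref{lem:cbb}, whose general-$s$ forms produce bounds of the shape $\|(u_\phi,b_\phi)\|_{\mathcal{B}^{1/2}}\|(u_\phi,b_\phi)\|_{\mathcal{B}^{s+1/2}}^2$. Since $\|(u_\phi,b_\phi)\|_{\mathcal{B}^{1/2}}$ is already small by \eqref{eq:u+T02}, the choice $\sqrt{\lambda}=C(1+\|e^{a|D_x|}(u_0,b_0)\|_{\mathcal{B}^{3/2}})^{1/2}$ absorbs every nonlinear term into the viscous $\sqrt{\lambda}$-term on the left; the $T^\star$-continuity argument used around \eqref{eq:Tstar}, combined with the smallness hypothesis \eqref{1.11}, then propagates the bound globally.

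\emph{Step for the $\partial_y$ part of \eqref{eq:hydrolimitdtU}.} I would differentiate the $u$- and $b$-equations of \eqref{S1eq8} with respect to $y$. Because $\partial_y p_\phi=0$, the pressure drops out, and the resulting system remains parabolic in $\partial_y u_\phi$ and $\partial_y b_\phi$. Performing the $\Delta_q^h$-energy estimate at the $\mathcal{B}^{3/2}$-level gives $\|\Delta_q^h\partial_y^2(u_\phi,b_\phi)\|_{L^2}^2$ plus the boundary contribution $[\partial_y^2(u_\phi,b_\phi)\,\partial_y(u_\phi,b_\phi)]_{y=0}^{y=1}$. The magnetic boundary term vanishes, as Section~3 shows $\partial_y b|_{y=0,1}=0$. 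For the velocity, evaluating the first equation of \eqref{S1eq8} on the walls (where $u=v=b=c=0$) yields $\partial_y^2 u_\phi|_{y=0,1}=\partial_x p_\phi$, which is controlled through the explicit representation of $\partial_x p$ derived at the start of Section~3 in terms of boundary traces of $\partial_y u$ and horizontal integrals of $u^2,b^2$; all of these are furnished by Step~1 at $s=\frac52$. The nonlinear pieces $\partial_y[(u\partial_x u)_\phi]$, $\partial_y[(v\partial_y u)_\phi]$ and their magnetic analogues are handled by the same Bony decomposition, placing one factor in $\mathcal{B}^{1/2}$ (small) and the other at the highest regularity available from Step~1.

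\emph{Step for the $\partial_t$ part of \eqref{eq:hydrolimitdtU}.} I would read $(\partial_t u)_\phi$ and $(\partial_t b)_\phi$ directly off \eqref{S1eq8} and bound each term of
\begin{equation*}
(\partial_t u)_\phi=-\lambda\dot\theta(t)|D_x|u_\phi+\partial_y^2 u_\phi-\partial_x p_\phi-(u\partial_x u)_\phi-(v\partial_y u)_\phi+(b\partial_x b)_\phi+(c\partial_y b)_\phi
\end{equation*}
(and its $b$-analogue) in $\tilde L^2_t(\mathcal{B}^{3/2})$. The transport and Lorentz terms are treated by the product lemmas together with the smallness of $\|(u_\phi,b_\phi)\|_{\mathcal{B}^{1/2}}$; the pressure term by the explicit formula recalled above; the viscous term $\partial_y^2 u_\phi$ by Step~2; and the weight $\dot\theta|D_x|u_\phi$ by the bound on $\|u_\phi\|_{\tilde L^2_{t,\dot\theta}(\mathcal{B}^{5/2})}$ from Step~1.

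\emph{Main obstacle.} The truly delicate point is the boundary-term analysis in Step~2, because $\partial_y u$ does not vanish on the walls; its control forces one to combine the trace of the equation with the explicit pressure identity and with the $s=\frac52$ bound of Step~1. A second, simultaneous difficulty is to select $\lambda$ and $c_1$ so that the smallness hypothesis \eqref{1.11}, the absorption of the new higher-regularity nonlinear terms, and the persistence of the analyticity radius $\theta(t)\le a/\lambda$ all remain mutually compatible throughout the bootstrap.
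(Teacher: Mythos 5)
Your Step 1 is essentially the paper's own argument for \eqref{eq:MhdlimitU}: redo the $\Delta_q^h$ energy estimate on \eqref{S1eq8} for general $s$, re-prove the paraproduct pieces of Lemmas \ref{lem C+B+A}, \ref{lem:ApaxA}, \ref{lem:cbb} that were restricted to $s\le 1$, and absorb the new contributions (which carry factors of the type $\Vert u_\phi\Vert_{\tilde L^\infty_t(\mathcal{B}^{3/2})}^{1/2}\Vert e^{\Rcal t}\pa_y u_\phi\Vert_{\tilde L^2_t(\mathcal{B}^{s})}$) by Young's inequality together with the choice $\lambda\ge C\bigl(1+\Vert u_\phi\Vert_{\tilde L^\infty_t(\mathcal{B}^{3/2})}+\Vert b_\phi\Vert_{\tilde L^\infty_t(\mathcal{B}^{3/2})}\bigr)$, which \eqref{1.11} makes self-consistent; that part is fine.

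For \eqref{eq:hydrolimitdtU}, however, your Steps 2 and 3 miss the device the paper actually uses and replace it by an argument with an unresolved gap. The paper never differentiates the system in $y$: it applies $\Delta_q^h$ to \eqref{S1eq8} and tests directly with $\Delta_q^h(\pa_t u)_\phi$ and $\Delta_q^h(\pa_t b)_\phi$. Because $u,b$ vanish on $y=0,1$ for all time, so do $(\pa_t u)_\phi,(\pa_t b)_\phi$, hence the integration by parts in $\psca{\Delta_q^h\pa_y^2u_\phi,\Delta_q^h(\pa_tu)_\phi}$ produces no boundary term and yields exactly $-\f12\frac{d}{dt}\Vert\Delta_q^h\pa_yu_\phi\Vert_{L^2}^2-\lambda\dot\theta(t)2^q\Vert\Delta_q^h\pa_yu_\phi\Vert_{L^2}^2$; thus the $\tilde L^2_t(\mathcal{B}^{3/2})$ bound for $(\pa_tu,\pa_tb)_\phi$ and the $\tilde L^\infty_t(\mathcal{B}^{3/2})$ bound for $\pa_y(u_\phi,b_\phi)$ fall out of a single identity. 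The pressure dies by the same divergence-free computation as before ($\psca{\Delta_q^h\pa_xp_\phi,\Delta_q^h(\pa_tu)_\phi}$ is converted into $\psca{\Delta_q^h\pa_yp_\phi,\Delta_q^h(\pa_tv)_\phi}=0$), so no explicit pressure formula, no boundary traces, and no norm of $\pa_y^2u_\phi$ are ever needed; the quadratic terms are simply estimated in $\tilde L^2_t(\mathcal{B}^{3/2})$ by product laws and closed with the $s=\f52$ case of Step 1 and the smallness of $\Vert(u_\phi,b_\phi)\Vert_{\mathcal{B}^{1/2}}$. Your route, by contrast, differentiates in $y$ and must control wall terms pairing $\pa_y^2u_\phi|_{y=0,1}=\pa_xp_\phi$ with the traces of $\pa_yu_\phi$, which do not vanish; controlling those traces requires $\pa_y^2u_\phi$ in $L^2$, i.e.\ precisely the quantity you are trying to estimate, and bounding $\pa_xp_\phi$ in $\mathcal{B}^{3/2}$ through the explicit formula reintroduces the same traces --- you flag this circle as the delicate point but do not close it. Moreover your Step 3 needs $\pa_y^2u_\phi$ in $\tilde L^2_t(\mathcal{B}^{3/2})$ to handle the viscous term, a quantity that neither the hypotheses nor \eqref{eq:MhdlimitU} provide and that the statement does not assert. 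As written, the proof of the second estimate is therefore incomplete; testing with $(\pa_tu)_\phi,(\pa_tb)_\phi$ instead of differentiating in $y$ removes all of these obstructions at once.
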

\begin{proof}
We start by giving proof of \eqref{eq:MhdlimitU}. In the same way, we have by applying the dyadic operator to the system \eqref{S1eq8} and  taking the $L^2$ inner product, then integrating with respect to the time variable, we have
\begin{multline} \label{S4eq5bisbis}
	\norm{e^{\Rcal t} \Delta_q^h u_{\phi}(t)}_{L^\infty_t (L^2)}^2 + \lambda \int_0^t \dot{\theta}(t') \norm{e^{\Rcal t'} |D_x|^{\frac12} \Delta_q^h u_{\phi}}_{L^2}^2 dt' + \norm{e^{\Rcal t} \Delta_q^h \pa_y u_{\phi}(t)}_{L^2_t(L^2)}^2 \\
	= \norm{\Delta_q^h u_{\phi}(0)}_{L^2}^2 + I_1 + I_2, \hspace{2cm}
\end{multline}
and
\begin{multline} \label{S4eq6bisbis}
	\norm{e^{\Rcal t} \Delta_q^h b_{\phi}(t)}_{L^\infty_t (L^2)}^2 + \lambda \int_0^t \dot{\theta}(t') \norm{e^{\Rcal t} |D_x|^{\frac12} \Delta_q^h b_{\phi}}_{L^2}^2 dt' + \norm{e^{\Rcal t} \Delta_q^h \pa_y^2 b_{\phi}(t)}_{L^2_t(L^2)}^2\\
	= \norm{\Delta_q^h b_{\phi}(0)}_{L^2}^2 + I_3+I_4. \hspace{2.5cm}
\end{multline}
 where 
 \begin{align*}
 \abs{I_1} &= \abs{\int_0^t \psca{e^{\Rcal t'}\Delta_q^h (u\p_x u+ v\pa_yu)_{\phi}, e^{\Rcal t'}\Delta_q^h u_{\phi})} dt'} \\
	\abs{I_2} &= \abs{\int_0^t \psca{e^{\Rcal t'}\Delta_q^h (b\p_x b+c\pa_yb)_{\phi}, e^{\Rcal t'}\Delta_q^h u_{\phi}} dt'} \\
	\abs{I_3} &= \abs{\int_0^t \psca{e^{\Rcal t'}\Delta_q^h (u\p_x b+ v\pa_yb)_{\phi}, e^{\Rcal t'}\Delta_q^h b_{\phi}} dt'} \\
	\abs{I_4} & = \abs{\int_0^t \psca{e^{\Rcal t'}\Delta_q^h (b\p_x u+ c\pa_yu)_{\phi}, e^{\Rcal t'}\Delta_q^h b_{\phi}} dt'}
 \end{align*}
 All the estimate getting in the proof of the theorem $\ref{th:1} $,  satisfied when $s\in]0,1]$, so we need now give the proof of all this estimate when our $s>0$. We first remark from the proof of the lemma \ref{lem C+B+A}, that for any $s>0$ we have 
 $$ \int_0^t \abs{ \psca{e^{\Rcal t'}\Delta_q^h (T^h_{u}\pa_xu +R^h(u,\p_x u))_{\phi}, e^{\Rcal t'}\Delta_q^h u_{\phi})} } dt' \leq C 2^{-2qs}d_q^2 \Vert e^{\mathcal{R}t} u_\phi \Vert_{\tilde{L}^2_{t,\dot{\theta}(t)}(\mathcal{B}^{s+\frac{1}{2}})}^2, $$
 then we only need to prove that 
 $$ \int_0^t \abs{ \psca{e^{\Rcal t'}\Delta_q^h (T^h_{\pa_xu}u)_{\phi}, e^{\Rcal t'}\Delta_q^h u_{\phi})} } dt' \leq C 2^{-2qs}d_q^2 \Vert e^{\mathcal{R}t} u_\phi \Vert_{\tilde{L}^2_{t,\dot{\theta}(t)}(\mathcal{B}^{s+\frac{1}{2}})}^2, \text{ \ \ for \ \ } s>0. $$
 Indeed we recall that 
 \begin{align} \label{eq:normu1}
	\Vert \Delta^h_q u_{\phi}(t') \Vert_{L^{\infty}} \lesssim 2^{\frac{q}{2}} \Vert \Delta_q^h u_{\phi}(t') \Vert_{L^2_h(L^{\infty}_v)} \lesssim 2^{\frac{q}{2}} \Vert \Delta_q^h \pa_y u_{\phi}(t') \Vert_{L^2} \lesssim d_q(u_\phi) \Vert  \pa_y u_{\phi}(t') \Vert_{\mathcal{B}^{\f12}},
\end{align}
then we infer,
\begin{align*}
 \int_0^t \abs{ \psca{e^{\Rcal t'}\Delta_q^h (T^h_{\pa_xu}u)_{\phi}, e^{\Rcal t'}\Delta_q^h u_{\phi})} } dt' &\lesssim \sum_{|q'-q|\leq 4} \int_0^t e^{2\Rcal t'} \Vert S^h_{q'-1} \pa_xu_\phi \Vert_{L^\infty} \Vert \Delta_{q'}^h u_\phi \Vert_{L^2} \Vert \Delta_{q}^h u_\phi \Vert_{L^2} dt'\\
 &\lesssim \sum_{|q'-q|\leq 4} 2^{q'} \int_0^t e^{2\Rcal t'} \Vert \pa_yu_\phi \Vert_{\mathcal{B}^\f12} \Vert \Delta_{q'}^h u_\phi \Vert_{L^2} \Vert \Delta_{q}^h u_\phi \Vert_{L^2} dt'\\
 &\lesssim C2^{-2qs} d_q^2 \Vert e^{\Rcal t}u_\phi \Vert_{\tilde{L}^2_{t,\dot{\theta}(t)}(\mathcal{B}^{s+\frac{1}{2}})}^2.
\end{align*}
While it follows from the proof of Lemma \ref{lem:ApaxA} that
 $$ \int_0^t \abs{ \psca{e^{\Rcal t'}\Delta_q^h (T^h_{\pa_yu}v +R^h(v,\p_y u))_{\phi}, e^{\Rcal t'}\Delta_q^h u_{\phi})} } dt' \leq C 2^{-2qs}d_q^2 \Vert e^{\mathcal{R}t} u_\phi \Vert_{\tilde{L}^2_{t,\dot{\theta}(t)}(\mathcal{B}^{s+\frac{1}{2}})}^2, $$
 In views at the proof of the lemme \ref{lem:ApaxA} (the estimate \eqref{eq:normK1}), we have 
 $$ \Vert \Delta_q^h v_\phi(t) \Vert_{L^\infty} \leq d_q 2^{\frac{q}{2}} \Vert u_\phi \Vert_{\mathcal{B}^\f32}^\f12 \Vert \pa_y u_\phi \Vert_{\mathcal{B}^\f12}^\f12 $$
 so we replace in the inequality of the term $T^h_{v} \pa_yu$, we obtain 
 \begin{align*}
  \int_0^t \abs{ \psca{e^{\Rcal t'}\Delta_q^h (T^h_{v}\pa_yu)_{\phi}, e^{\Rcal t'}\Delta_q^h u_{\phi})} } dt' &\lesssim \sum_{|q'-q|\leq 4} \int_0^t e^{2\Rcal t'} \Vert S^h_{q'-1} v_\phi \Vert_{L^\infty} \Vert \Delta_{q'}^h \pa_y u_\phi \Vert_{L^2} \Vert \Delta_{q}^h u_\phi \Vert_{L^2} dt'\\
 &\lesssim \sum_{|q'-q|\leq 4} 2^{\frac{q'}{2}} \int_0^t e^{2\Rcal t'} \Vert u_\phi \Vert_{\mathcal{B}^\f32}^\f12 \Vert \pa_yu_\phi \Vert_{\mathcal{B}^\f12}^\f12 \Vert \Delta_{q'}^h \pa_yu_\phi \Vert_{L^2} \Vert \Delta_{q}^h u_\phi \Vert_{L^2} dt'\\
 &\lesssim C2^{-2qs} d_q^2 \Vert u_\phi \Vert_{\tilde{L}^\infty_t(\mathcal{B}^\f32)}^\f12 \Vert e^{\Rcal t}\pa_y u_\phi \Vert_{\tilde{L}^2_t(\mathcal{B}^s)} \Vert e^{\Rcal t}u_\phi \Vert_{\tilde{L}^2_{t,\dot{\theta}(t)}(\mathcal{B}^{s+\frac{1}{2}})}.
 \end{align*}
 As a result, it comes out 
 \begin{align*}
   \int_0^t &\abs{\psca{e^{\Rcal t'}\Delta_q^h (u\p_x u+ v\pa_yu)_{\phi}, e^{\Rcal t'}\Delta_q^h u_{\phi})}} dt' \lesssim C 2^{-2qs}d_q^2 \Vert e^{\mathcal{R}t} u_\phi \Vert_{\tilde{L}^2_{t,\dot{\theta}(t)}(\mathcal{B}^{s+\frac{1}{2}})} \\
   & \times \left( \Vert e^{\mathcal{R}t} u_\phi \Vert_{\tilde{L}^2_{t,\dot{\theta}(t)}(\mathcal{B}^{s+\frac{1}{2}})} + \Vert u_\phi \Vert_{\tilde{L}^\infty_t(\mathcal{B}^\f32)}^\f12 \Vert e^{\Rcal t}\pa_y u_\phi \Vert_{\tilde{L}^2_t(\mathcal{B}^s)} \right).
 \end{align*}
    Now, we will get the estimate of the second term $I_2$. In the same way, we remark from the proof of the lemma \ref{lem C+B+A}, that we only need to prove 
    $$ \int_0^t \abs{ \psca{e^{\Rcal t'}\Delta_q^h (T^h_{\pa_xb}b)_{\phi}, e^{\Rcal t'}\Delta_q^h u_{\phi})} } dt' \leq C 2^{-2qs}d_q^2 \Vert e^{\mathcal{R}t} b_\phi \Vert_{\tilde{L}^2_{t,\dot{\theta}(t)}(\mathcal{B}^{s+\frac{1}{2}})}\Vert e^{\mathcal{R}t} b_\phi \Vert_{\tilde{L}^2_{t,\dot{\theta}(t)}(\mathcal{B}^{s+\frac{1}{2}})}, \text{ \ \ for \ \ } s>0. $$
    So, in views of \eqref{eq:normu1}, we infer 
    \begin{align*}
 \int_0^t \abs{ \psca{e^{\Rcal t'}\Delta_q^h (T^h_{\pa_xb}b)_{\phi}, e^{\Rcal t'}\Delta_q^h u_{\phi})} } dt' &\lesssim \sum_{|q'-q|\leq 4} \int_0^t e^{2\Rcal t'} \Vert S^h_{q'-1} \pa_xb_\phi \Vert_{L^\infty} \Vert \Delta_{q'}^h b_\phi \Vert_{L^2} \Vert \Delta_{q}^h u_\phi \Vert_{L^2} dt'\\
 &\lesssim \sum_{|q'-q|\leq 4} 2^{q'} \int_0^t e^{2\Rcal t'} \Vert \pa_yb_\phi \Vert_{\mathcal{B}^\f12} \Vert \Delta_{q'}^h b_\phi \Vert_{L^2} \Vert \Delta_{q}^h u_\phi \Vert_{L^2} dt'\\
 &\lesssim C2^{-2qs} d_q^2 \Vert e^{\Rcal t}b_\phi \Vert_{\tilde{L}^2_{t,\dot{\theta}(t)}(\mathcal{B}^{s+\frac{1}{2}})}  \Vert e^{\Rcal t}u_\phi \Vert_{\tilde{L}^2_{t,\dot{\theta}(t)}(\mathcal{B}^{s+\frac{1}{2}})}.
\end{align*}
 In views at the proof of the lemme \ref{lem:ApaxA} (the estimate \eqref{eq:normK1}), we have 
 $$ \Vert \Delta_q^h c_\phi(t) \Vert_{L^\infty} \leq d_q 2^{\frac{q}{2}} \Vert b_\phi \Vert_{\mathcal{B}^\f32}^\f12 \Vert \pa_y b_\phi \Vert_{\mathcal{B}^\f12}^\f12 $$
 so that there holds 
 \begin{align*}
  \int_0^t \abs{ \psca{e^{\Rcal t'}\Delta_q^h (T^h_{c}\pa_yb)_{\phi}, e^{\Rcal t'}\Delta_q^h u_{\phi})} } dt' &\lesssim \sum_{|q'-q|\leq 4} \int_0^t e^{2\Rcal t'} \Vert S^h_{q'-1} c_\phi \Vert_{L^\infty} \Vert \Delta_{q'}^h \pa_y b_\phi \Vert_{L^2} \Vert \Delta_{q}^h u_\phi \Vert_{L^2} dt'\\
 &\lesssim \sum_{|q'-q|\leq 4} 2^{\frac{q'}{2}} \int_0^t e^{2\Rcal t'} \Vert b_\phi \Vert_{\mathcal{B}^\f32}^\f12 \Vert \pa_yb_\phi \Vert_{\mathcal{B}^\f12}^\f12 \Vert \Delta_{q'}^h \pa_yb_\phi \Vert_{L^2} \Vert \Delta_{q}^h u_\phi \Vert_{L^2} dt'\\
 &\lesssim C2^{-2qs} d_q^2 \Vert b_\phi \Vert_{\tilde{L}^\infty_t(\mathcal{B}^\f32)}^\f12 \Vert e^{\Rcal t}\pa_y b_\phi \Vert_{\tilde{L}^2_t(\mathcal{B}^s)} \Vert e^{\Rcal t}u_\phi \Vert_{\tilde{L}^2_{t,\dot{\theta}(t)}(\mathcal{B}^{s+\frac{1}{2}})}.
 \end{align*}
 As a result, it comes out 
 \begin{align*}
   \int_0^t &\abs{\psca{e^{\Rcal t'}\Delta_q^h (b\p_x b+ c\pa_yb)_{\phi}, e^{\Rcal t'}\Delta_q^h u_{\phi})}} dt' \lesssim C 2^{-2qs}d_q^2 \Vert e^{\mathcal{R}t} u_\phi \Vert_{\tilde{L}^2_{t,\dot{\theta}(t)}(\mathcal{B}^{s+\frac{1}{2}})} \\
   & \times \left( \Vert e^{\mathcal{R}t} b_\phi \Vert_{\tilde{L}^2_{t,\dot{\theta}(t)}(\mathcal{B}^{s+\frac{1}{2}})} + \Vert b_\phi \Vert_{\tilde{L}^\infty_t(\mathcal{B}^\f32)}^\f12 \Vert e^{\Rcal t}\pa_y b_\phi \Vert_{\tilde{L}^2_t(\mathcal{B}^s)} \right).
   \end{align*}
   
   We do the same think to estimate $I_3$ and $I_4$, then we have 
   \begin{align*}
   I_3 + I_4 &\lesssim C 2^{-2qs}d_q^2 \Vert e^{\mathcal{R}t} u_\phi \Vert_{\tilde{L}^2_{t,\dot{\theta}(t)}(\mathcal{B}^{s+\frac{1}{2}})} \\
   &\times \left( \Vert e^{\mathcal{R}t} b_\phi \Vert_{\tilde{L}^2_{t,\dot{\theta}(t)}(\mathcal{B}^{s+\frac{1}{2}})} + \Vert b_\phi \Vert_{\tilde{L}^\infty_t(\mathcal{B}^\f32)}^\f12 \Vert e^{\Rcal t}\pa_y u_\phi \Vert_{\tilde{L}^2_t(\mathcal{B}^s)}+ \Vert u_\phi \Vert_{\tilde{L}^\infty_t(\mathcal{B}^\f32)}^\f12 \Vert e^{\Rcal t}\pa_y b_\phi \Vert_{\tilde{L}^2_t(\mathcal{B}^s)} \right)
   \end{align*}
   
   	So we replace all the results obtained in \ref{S4eq5bisbis} and \ref{S4eq6bisbis} and
   	Multiplying by $2^{2qs}$, then summing with respect to $q \in \ZZ$, we obtain 
\begin{align} \label{S4eq5terbis}
	\norm{e^{\Rcal t} u_{\phi}}_{\tilde{L}^\infty_t(\Bcal^s)}^2 &+ \lambda \norm{e^{\Rcal t} u_{\phi}}_{\tilde{L}^2_{t,\dot{\theta}(t)} (\Bcal^{s + \frac12})}^2 + \norm{e^{\Rcal t} \pa_y u_{\phi}}_{\tilde{L}^2_t(\Bcal^s)}^2
	\leq C\norm{u_{\phi}(0)}_{\Bcal^s}^2 + C\Vert e^{\mathcal{R}t} u_\phi \Vert_{\tilde{L}^2_{t,\dot{\theta}(t)}(\mathcal{B}^{s+\frac{1}{2}})} \\
   & \times \left( \Vert e^{\mathcal{R}t} b_\phi \Vert_{\tilde{L}^2_{t,\dot{\theta}(t)}(\mathcal{B}^{s+\frac{1}{2}})} + \Vert b_\phi \Vert_{\tilde{L}^\infty_t(\mathcal{B}^\f32)}^\f12 \Vert e^{\Rcal t}\pa_y b_\phi \Vert_{\tilde{L}^2_t(\mathcal{B}^s)}+ \Vert u_\phi \Vert_{\tilde{L}^\infty_t(\mathcal{B}^\f32)}^\f12 \Vert e^{\Rcal t}\pa_y u_\phi \Vert_{\tilde{L}^2_t(\mathcal{B}^s)} \right) \notag ,
\end{align}
and
\begin{align} \label{S4eq6terbis}
	\norm{e^{\Rcal t} b_{\phi}}_{\tilde{L}^\infty_t(\Bcal^s)}^2 &+ \lambda \norm{e^{\Rcal t} b_{\phi}}_{\tilde{L}^2_{t,\dot{\theta}(t)} (\Bcal^{s + \frac12})}^2 + \norm{e^{\Rcal t} \pa_y b_{\phi}}_{\tilde{L}^2_t(\Bcal^s)}^2
	\leq C\norm{b_{\phi}(0)}_{\Bcal^s}^2 + C\Vert e^{\mathcal{R}t} u_\phi \Vert_{\tilde{L}^2_{t,\dot{\theta}(t)}(\mathcal{B}^{s+\frac{1}{2}})} \\
   &\times \left( \Vert e^{\mathcal{R}t} b_\phi \Vert_{\tilde{L}^2_{t,\dot{\theta}(t)}(\mathcal{B}^{s+\frac{1}{2}})} + \Vert b_\phi \Vert_{\tilde{L}^\infty_t(\mathcal{B}^\f32)}^\f12 \Vert e^{\Rcal t}\pa_y u_\phi \Vert_{\tilde{L}^2_t(\mathcal{B}^s)}+ \Vert b_\phi \Vert_{\tilde{L}^\infty_t(\mathcal{B}^\f32)}^\f12 \Vert e^{\Rcal t}\pa_y u_\phi \Vert_{\tilde{L}^2_t(\mathcal{B}^s)} \right) \notag.
\end{align}

Thus, choosing 
\begin{equation}
	\label{eq:C} C \geq \max\set{4,\frac{1}{2\Rcal}},
\end{equation}
and taking the sum of \eqref{S4eq5terbis} and \eqref{S4eq6terbis}, we have
\begin{align*}
	\norm{e^{\Rcal t} \pare{u_{\phi},b_{\phi}}}_{\tilde{L}^\infty_t(\Bcal^s)}^2 &+ \lambda \norm{e^{\Rcal t} \pare{u_{\phi},b_{\phi}}}_{\tilde{L}^2_{t,\dot{\theta}(t)} (\Bcal^{s + \frac12})}^2 + \norm{e^{\Rcal t} \pa_y u_{\phi}}_{\tilde{L}^2_t(\Bcal^s)}^2
	\leq 2C \norm{e^{a\abs{D_x}} (u_0,b_0)}_{\mathcal{B}^{s}}^2 \\&+ C\Vert e^{\mathcal{R}t} u_\phi \Vert_{\tilde{L}^2_{t,\dot{\theta}(t)}(\mathcal{B}^{s+\frac{1}{2}})} 
  \times \Big( \Vert e^{\mathcal{R}t} b_\phi \Vert_{\tilde{L}^2_{t,\dot{\theta}(t)}(\mathcal{B}^{s+\frac{1}{2}})} \\ &+ \Vert b_\phi \Vert_{\tilde{L}^\infty_t(\mathcal{B}^\f32)}^\f12 \Vert e^{\Rcal t}\pa_y (b_\phi,u_\phi) \Vert_{\tilde{L}^2_t(\mathcal{B}^s)}+ \Vert u_\phi \Vert_{\tilde{L}^\infty_t(\mathcal{B}^\f32)}^\f12 \Vert e^{\Rcal t}\pa_y (u_\phi,b_\phi) \Vert_{\tilde{L}^2_t(\mathcal{B}^s)} \Big).
\end{align*}

Applying Young’s inequality yields
\begin{align*}
C\Vert b_\phi &\Vert_{\tilde{L}^\infty_t(\mathcal{B}^\f32)}^\f12 \Vert e^{\Rcal t}\pa_y (b_\phi,u_\phi) \Vert_{\tilde{L}^2_t(\mathcal{B}^s)}\Vert e^{\mathcal{R}t} u_\phi \Vert_{\tilde{L}^2_{t,\dot{\theta}(t)}(\mathcal{B}^{s+\frac{1}{2}})}\\ &\leq \Vert b_\phi \Vert_{\tilde{L}^\infty_t(\mathcal{B}^\f32)} \Vert e^{\mathcal{R}t} u_\phi \Vert_{\tilde{L}^2_{t,\dot{\theta}(t)}(\mathcal{B}^{s+\frac{1}{2}})}^2 + \frac{1}{2}  \Vert e^{\Rcal t}\pa_y (b_\phi,u_\phi) \Vert_{\tilde{L}^2_t(\mathcal{B}^s)}^2 
\end{align*}
and
\begin{align*}
C\Vert u_\phi &\Vert_{\tilde{L}^\infty_t(\mathcal{B}^\f32)}^\f12 \Vert e^{\Rcal t}\pa_y (u_\phi,b_\phi) \Vert_{\tilde{L}^2_t(\mathcal{B}^s)}\Vert e^{\mathcal{R}t} u_\phi \Vert_{\tilde{L}^2_{t,\dot{\theta}(t)}(\mathcal{B}^{s+\frac{1}{2}})}\\ &\leq \Vert u_\phi \Vert_{\tilde{L}^\infty_t(\mathcal{B}^\f32)} \Vert e^{\mathcal{R}t} u_\phi \Vert_{\tilde{L}^2_{t,\dot{\theta}(t)}(\mathcal{B}^{s+\frac{1}{2}})}^2 + \frac{1}{2}  \Vert e^{\Rcal t}\pa_y (b_\phi,u_\phi) \Vert_{\tilde{L}^2_t(\mathcal{B}^s)}^2.
\end{align*}
Therefore if we take
\begin{equation} \label{eq:lambda2}
\lambda \geq C\left(1+ \Vert u_\phi \Vert_{\tilde{L}^\infty_t(\mathcal{B}^\f32)} + \Vert b_\phi \Vert_{\tilde{L}^\infty_t(\mathcal{B}^\f32)}\right),
\end{equation}
then we obtain
		\begin{align}
		\label{eq:Mhdestimate}
	\norm{e^{\Rcal t} \pare{u_{\phi},b_{\phi}}}_{\tilde{L}^\infty_t(\Bcal^s)} +  \norm{e^{\Rcal t} \pa_y (u_{\phi},b_\phi) }_{\tilde{L}^2_t(\Bcal^s)} \leq 2C \norm{e^{a\abs{D_x}} (u_0,b_0)}_{\mathcal{B}^{s}}.
	\end{align}
	
which in particular implies that under the condition \eqref{eq:lambda2}, there holds
\begin{align*}
\Vert u_\phi \Vert_{\tilde{L}^\infty_t(\mathcal{B}^\f32)} + \Vert b_\phi \Vert_{\tilde{L}^\infty_t(\mathcal{B}^\f32)} \leq C \norm{e^{a\abs{D_x}} (u_0,b_0)}_{\mathcal{B}^{\f32}}
\end{align*} 
then by taking $\lambda = C(1+ \norm{e^{a\abs{D_x}} (u_0,b_0)}_{\mathcal{B}^{\f32}})$, \eqref{eq:lambda2} verified. So under the condition \eqref{1.11}, we have that \eqref{eq:lambda2} hold, and then \eqref{eq:Mhdestimate} is valid for any $t>0$. Thos complete the proof of the proposition.

Now we still have to prove the second estimate \eqref{eq:hydrolimitdtU}.. For that, we apply the dyadic operator $\Delta_q^h$ to \eqref{S1eq8} and taking the $L^2$ inner product of the resulting equation with $\Delta_q^h (\pa_tu)_\phi$ and $\Delta_q^h(\pa_tb)_\phi$. That yields 
\begin{multline*}
	  \psca{ \Delta_q^h (\pa_t u)_{\phi}, \Delta_q^h (\pa_tu)_\phi}_{L^2}  - \psca{\Delta_q^h \pa_y^2 u_{\phi},\Delta_q^h (\pa_tu)_\phi }_{L^2}+ \psca{\Delta_q^h \pa_x p_\phi, \Delta_q^h (\pa_tu)_\phi}_{L^2}\\
	= -\psca{\Delta_q^h (u\p_x u)_{\phi}, \Delta_q^h (\pa_tu)_\phi}_{L^2} - \psca{\Delta_q^h (v\p_y u)_{\phi}, \Delta_q^h (\pa_tu)_\phi}_{L^2} + \psca{\Delta_q^h (b\p_x b + c\p_yb)_{\phi},\Delta_q^h (\pa_tu)_\phi},  \\
\end{multline*}
and
\begin{multline*}
 \psca{ \Delta_q^h (\pa_t  b)_{\phi}, \Delta_q^h(\pa_tb)_\phi }_{L^2} - \psca{ \Delta_q^h \pa_y^2 b_{\phi},\Delta_q^h(\pa_tb)_\phi }_{L^2}  \\
	= -\psca{\Delta_q^h (u\p_x b)_{\phi}, \Delta_q^h(\pa_tb)_\phi}_{L^2} - \psca{\Delta_q^h (v\p_y b)_{\phi}, \Delta_q^h(\pa_tb)_\phi}_{L^2} + \psca{\Delta_q^h(b\p_x u + c\p_yu)_{\phi}, \Delta_q^h(\pa_tb)_\phi}. \hspace{1cm}
\end{multline*}
The fact that $(\pa_tu)_\phi = \pa_t u_\phi + \lambda \dot{\theta}(t) |D_x|u_\phi$ implies 
\begin{align*}
    \psca{ \Delta_q^h \pa_y^2 u_\phi,\Delta_q^h (\pa_tu)_\phi }_{L^2} &= -\pare{\f12 \frac{d}{dt} \Vert \Delta_q^h \pa_y u_\phi \Vert_{L^2}^2 + \lambda \dot{\theta}(t) 2^q \Vert \Delta_q^h \pa_y u_\phi \Vert_{L^2}^2}\\
    \psca{ \Delta_q^h \pa_y^2 b_\phi,\Delta_q^h (\pa_tb)_\phi }_{L^2} &= -\pare{\f12 \frac{d}{dt} \Vert \Delta_q^h \pa_yb_\phi \Vert_{L^2}^2 + \lambda \dot{\theta}(t) 2^q \Vert \Delta_q^h \pa_y b_\phi \Vert_{L^2}^2}
\end{align*}
from which, we deduce that 
\begin{align*}
\Vert \Delta_q^h (\pa_tu)_\phi \Vert_{L^2}^2 + \f12 \frac{d}{dt}\Vert \Delta_q^h \pa_y u_\phi \Vert_{L^2}^2 \leq  I_1 + I_2 + I_3,
\end{align*}
and
\begin{align*}
    \Vert \Delta_q^h (\pa_tb)_\phi \Vert_{L^2}^2 + \f12 \frac{d}{dt}\Vert \Delta_q^h \pa_y b_\phi \Vert_{L^2}^2 \leq  D_1 + D_2,
\end{align*}
where
\begin{align*}
I_1 &= \abs{\psca{ \Delta_q^h (u\pa_xu +b\pa_xb)_\phi,\Delta_q^h (\pa_tu)_\phi }_{L^2}}\\
I_2 &=  \abs{\psca{  \Delta_q^h (v\pa_yu+ c\pa_yb)_\phi,\Delta_q^h (\pa_tu)_\phi }_{L^2}}\\
I_3 &= \abs{\psca{  \Delta_q^h \pa_x p_\phi , \Delta_q^h (\pa_tu)_\phi }_{L^2}}.\\
D_1 &= \abs{\psca{ \Delta_q^h (u\pa_xb +b\pa_xu)_\phi,\Delta_q^h (\pa_tb)_\phi }_{L^2}}\\
D_2 &= \abs{\psca{ \Delta_q^h (v\pa_yb +c\pa_yu)_\phi,\Delta_q^h (\pa_tb)_\phi }_{L^2}}\\
\end{align*}

Since $\dd_x u + \dd_y v = 0$, using \eqref{vv} and integrations by parts, we find 
\begin{align*}
I_3 = \abs{\psca{  \Delta_q^h \pa_x p_\phi , \Delta_q^h (\pa_tu)_\phi }_{L^2}} &= \abs{\psca{  \Delta_q^h \pa_x p_\phi , \Delta_q^h (\pa_tu_\phi + \lambda \dot{\theta}(t) |D_x|u_\phi) }_{L^2}}\\
& = \abs{\psca{  \Delta_q^h p_\phi , \Delta_q^h (\pa_t \pa_x u_\phi + \lambda \dot{\theta}(t) |D_x| \pa_xu_\phi) }_{L^2}} \\
& =  \abs{\psca{  \Delta_q^h p_\phi , \Delta_q^h (\pa_t \pa_y v_\phi + \lambda \dot{\theta}(t) |D_x| \pa_y v_\phi) }_{L^2}} \\
& =  \abs{\psca{  \Delta_q^h \pa_y p_\phi , \Delta_q^h (\pa_t v)_\phi }_{L^2}} = 0 \ (\text{ \ because \ } \pa_y = 0)
\end{align*}

For $I_1$, $I_2$, $D_1$ and $D_2$ we have 
\begin{align*}
	I_1 = \abs{\psca{ \Delta_q^h (u\pa_xu+b\pa_xb)_\phi,\Delta_q^h (\pa_tu)_\phi }_{L^2}} &\leq \abs{\psca{  \Delta_q^h (u\pa_xu)_\phi,\Delta_q^h (\pa_tu)_\phi }_{L^2}} +\abs{\psca{  \Delta_q^h (b\pa_xb)_\phi,\Delta_q^h (\pa_tu)_\phi }_{L^2}} \\ &\leq \Vert \Delta_q^h (u\pa_xu)_\phi \Vert_{L^2}^2+ \Vert \Delta_q^h (b\pa_xb)_\phi \Vert_{L^2}^2  + \frac{1}{10} \Vert \Delta_q^h (\pa_tu)_\phi \Vert_{L^2}^2,
\end{align*}
and 
\begin{align*}
    I_2 &= \abs{\psca{  \Delta_q^h (v\pa_yu+c\pa_yb)_\phi,\Delta_q^h (\pa_tu)_\phi }_{L^2}} \leq \Vert \Delta_q^h (v\pa_yu)_\phi \Vert_{L^2}^2+\Vert \Delta_q^h (c\pa_yb)_\phi \Vert_{L^2}^2  + \frac{1}{10} \Vert \Delta_q^h (\pa_tu)_\phi \Vert_{L^2}^2,
\end{align*}
and 
\begin{align*}
    D_1 = \abs{\psca{ \Delta_q^h (u\pa_xb +b\pa_xu)_\phi,\Delta_q^h (\pa_tb)_\phi }_{L^2}} \leq \Vert \Delta_q^h (u\pa_xb)_\phi \Vert_{L^2}^2+ \Vert \Delta_q^h (b\pa_xu)_\phi \Vert_{L^2}^2  + \frac{1}{10} \Vert \Delta_q^h (\pa_tb)_\phi \Vert_{L^2}^2
\end{align*}
and
\begin{align*}
    D_2 = \abs{\psca{ \Delta_q^h (v\pa_yb +c\pa_yu)_\phi,\Delta_q^h (\pa_tb)_\phi }_{L^2}} \leq \Vert \Delta_q^h (v\pa_yb)_\phi \Vert_{L^2}^2+ \Vert \Delta_q^h (c\pa_yu)_\phi \Vert_{L^2}^2  + \frac{1}{10} \Vert \Delta_q^h (\pa_tb)_\phi \Vert_{L^2}^2.
\end{align*}
Then, we deduce that
\begin{align*}
	&\Vert \Delta_q^h (\pa_tu)_\phi \Vert_{L^2}^2 + \frac{d}{dt}\Vert \Delta_q^h \pa_y (u_\phi,b_\phi) \Vert_{L^2}^2 + \Vert \Delta_q^h (\pa_tb)_\phi \Vert_{L^2}^2 \leq C\Big(\Vert \Delta_q^h (u\pa_xu)_\phi \Vert_{L^2}^2  + \Vert \Delta_q^h (v\pa_yu)_\phi \Vert_{L^2}^2 \\ & +\Vert \Delta_q^h (b\pa_xb)_\phi \Vert_{L^2}^2 + \Vert \Delta_q^h (c\pa_yb)_\phi \Vert_{L^2}^2 + \Vert \Delta_q^h (u\pa_xb)_\phi \Vert_{L^2}^2 + \Vert \Delta_q^h (b\pa_xu)_\phi \Vert_{L^2}^2 +\Vert \Delta_q^h (c\pa_yu)_\phi \Vert_{L^2}^2+ \Vert \Delta_q^h (b\pa_xu)_\phi \Vert_{L^2}^2.
\end{align*}
Multiplying the result by $e^{2\mathcal{R}t}$ and integrating over $[0,t]$, we get 
\begin{multline*}
	\Vert e^{\mathcal{R}t}\Delta_q^h (\pa_tu)_\phi \Vert_{L^2_t(L^2)}^2 +\Vert e^{\mathcal{R}t}\Delta_q^h (\pa_tb)_\phi \Vert_{L^2_t(L^2)}^2+  \Vert e^{\mathcal{R}t} \Delta_q^h \pa_y (u_\phi,b_\phi) \Vert_{L^\infty_t(L^2)}^2
	\leq C \Big(\Vert \Delta_q^h \pa_ye^{a|D_x|} (u_0,b_0) \Vert_{L^2}^2 \\  + \Vert e^{\mathcal{R}t}\Delta_q^h (u\pa_xu)_\phi \Vert_{L^2_t(L^2)}^2+\Vert e^{\mathcal{R}t}\Delta_q^h (v\pa_yu)_\phi \Vert_{L^2_t(L^2)}^2 +  \Vert e^{\mathcal{R}t}\Delta_q^h (b\pa_xb)_\phi \Vert_{L^2_t(L^2)}^2 + \Vert e^{\mathcal{R}t}\Delta_q^h (c\pa_yb)_\phi \Vert_{L^2_t(L^2)}^2 \\ +\Vert e^{\mathcal{R}t}\Delta_q^h (u\pa_xb)_\phi \Vert_{L^2_t(L^2)}^2 +  \Vert e^{\mathcal{R}t}\Delta_q^h (v\pa_yb)_\phi \Vert_{L^2_t(L^2)}^2 +\Vert e^{\mathcal{R}t}\Delta_q^h (b\pa_xu)_\phi \Vert_{L^2_t(L^2)}^2 +  \Vert e^{\mathcal{R}t}\Delta_q^h (c\pa_yu)_\phi \Vert_{L^2_t(L^2)}^2\Big).
\end{multline*}
Multiplying the above inequality by $2^{3q}$, then taking the square root of the resulting estimate, and finally summing up the obtained equations with respect to $q \in \mathbb{Z},$ we obtain
\begin{multline} \label{4.14}
	\Vert e^{\mathcal{R}t} (\pa_tu)_\phi \Vert_{\tilde{L}^2_t(\mathcal{B}^\f32)} +\Vert e^{\mathcal{R}t} (\pa_tb)_\phi \Vert_{\tilde{L}^2_t(\mathcal{B}^\f32)}+  \Vert e^{\mathcal{R}t} \pa_y (u_\phi,b_\phi) \Vert_{\tilde{L}^\infty_t(\mathcal{B}^\f32)}
	\leq C \Big(\Vert \pa_ye^{a|D_x|} (u_0,b_0) \Vert_{\mathcal{B}^\f32} \\  + \Vert e^{\mathcal{R}t} (u\pa_xu)_\phi \Vert_{\tilde{L}^2_t(\mathcal{B}^\f32)}+\Vert e^{\mathcal{R}t} (v\pa_yu)_\phi \Vert_{\tilde{L}^2_t(\mathcal{B}^\f32)} +  \Vert e^{\mathcal{R}t} (b\pa_xb)_\phi \Vert_{\tilde{L}^2_t(\mathcal{B}^\f32)} + \Vert e^{\mathcal{R}t} (c\pa_yb)_\phi \Vert_{\tilde{L}^2_t(\mathcal{B}^\f32)} \\ +\Vert e^{\mathcal{R}t} (u\pa_xb)_\phi \Vert_{\tilde{L}^2_t(\mathcal{B}^\f32)} +  \Vert e^{\mathcal{R}t} (v\pa_yb)_\phi \Vert_{\tilde{L}^2_t(\mathcal{B}^\f32)} +\Vert e^{\mathcal{R}t}\ (b\pa_xu)_\phi \Vert_{\tilde{L}^2_t(\mathcal{B}^\f32)} +  \Vert e^{\mathcal{R}t} (c\pa_yu)_\phi \Vert_{\tilde{L}^2_t(\mathcal{B}^\f32)}. \Big).
\end{multline}
Next, it follows from the law of product in anisotropic Besov spaces and Poincaré inequality that 
\begin{align*}
	\Vert e^{\mathcal{R}t} (u\pa_xu)_\phi \Vert_{\tilde{L}^2_t(\mathcal{B}^\f32)} &\leq  \Vert  u_\phi \Vert_{\tilde{L}^\infty(\mathcal{B}^\f12)} \Vert e^{\mathcal{R}t} \pa_y u_\phi \Vert_{\tilde{L}^2_t(\mathcal{B}^\f52)} ; \\
	\Vert e^{\mathcal{R}t} (b\pa_xb)_\phi \Vert_{\tilde{L}^2_t(\mathcal{B}^\f32)} &\leq \Vert  b_\phi \Vert_{\tilde{L}^\infty(\mathcal{B}^\f12)} \Vert e^{\mathcal{R}t} \pa_y b_\phi \Vert_{\tilde{L}^2_t(\mathcal{B}^\f52)} \\
	\Vert e^{\mathcal{R}t} (v\pa_yu)_\phi \Vert_{\tilde{L}^2_t(\mathcal{B}^\f32)} &\leq  \Vert  u_\phi \Vert_{\tilde{L}^\infty(\mathcal{B}^\f12)} \Vert e^{\mathcal{R}t} \pa_y u_\phi \Vert_{\tilde{L}^2_t(\mathcal{B}^\f52)} + \Vert  u_\phi \Vert_{\tilde{L}^\infty(\mathcal{B}^\f52)} \Vert e^{\mathcal{R}t} \pa_y u_\phi \Vert_{\tilde{L}^2_t(\mathcal{B}^\f12)} \\
	\Vert e^{\mathcal{R}t} (c\pa_yb)_\phi \Vert_{\tilde{L}^2_t(\mathcal{B}^\f32)} &\leq  \Vert  b_\phi \Vert_{\tilde{L}^\infty(\mathcal{B}^\f12)} \Vert e^{\mathcal{R}t} \pa_y b_\phi \Vert_{\tilde{L}^2_t(\mathcal{B}^\f52)} + \Vert  b_\phi \Vert_{\tilde{L}^\infty(\mathcal{B}^\f52)} \Vert e^{\mathcal{R}t} \pa_y b_\phi \Vert_{\tilde{L}^2_t(\mathcal{B}^\f12)}.
\end{align*}
and 
\begin{align*}
    \Vert e^{\mathcal{R}t} (u\pa_xb)_\phi \Vert_{\tilde{L}^2_t(\mathcal{B}^\f32)} &+ \Vert e^{\mathcal{R}t} (b\pa_xu)_\phi \Vert_{\tilde{L}^2_t(\mathcal{B}^\f32)} \\ &\leq \Vert  u_\phi \Vert_{\tilde{L}^\infty(\mathcal{B}^\f12)} \Vert e^{\mathcal{R}t} \pa_y b_\phi \Vert_{\tilde{L}^2_t(\mathcal{B}^\f52)} + \Vert  b_\phi \Vert_{\tilde{L}^\infty(\mathcal{B}^\f12)} \Vert e^{\mathcal{R}t} \pa_y u_\phi \Vert_{\tilde{L}^2_t(\mathcal{B}^\f52)}
\end{align*}
and 
\begin{align*}
    \Vert e^{\mathcal{R}t} &(v\pa_yb)_\phi \Vert_{\tilde{L}^2_t(\mathcal{B}^\f32)} + \Vert e^{\mathcal{R}t} (c\pa_yu)_\phi \Vert_{\tilde{L}^2_t(\mathcal{B}^\f32)} \leq\Vert  u_\phi \Vert_{\tilde{L}^\infty(\mathcal{B}^\f12)} \Vert e^{\mathcal{R}t} \pa_y b_\phi \Vert_{\tilde{L}^2_t(\mathcal{B}^\f52)} \\ & + \Vert  u_\phi \Vert_{\tilde{L}^\infty(\mathcal{B}^\f52)} \Vert e^{\mathcal{R}t} \pa_y b_\phi \Vert_{\tilde{L}^2_t(\mathcal{B}^\f12)}+ \Vert  b_\phi \Vert_{\tilde{L}^\infty(\mathcal{B}^\f12)} \Vert e^{\mathcal{R}t} \pa_y u_\phi \Vert_{\tilde{L}^2_t(\mathcal{B}^\f52)} + \Vert  b_\phi \Vert_{\tilde{L}^\infty(\mathcal{B}^\f52)} \Vert e^{\mathcal{R}t} \pa_y u_\phi \Vert_{\tilde{L}^2_t(\mathcal{B}^\f12)}
\end{align*}
Inserting the above estimates into \eqref{4.14} and then using the smallness condition $\norm{u_{\phi}}_{\mathcal{B}^{\frac{1}{2}}} +\norm{b_{\phi}}_{\mathcal{B}^{\frac{1}{2}}}\leq \frac{1}{2C^2}$, we finally obtain 
\begin{align*}
 \Vert e^{\mathcal{R}t} (\pa_tu)_\phi \Vert_{\tilde{L}^2_t(\mathcal{B}^\f32)} &+ \Vert e^{\mathcal{R}t} (\pa_tb)_\phi \Vert_{\tilde{L}^2_t(\mathcal{B}^\f32)} + \Vert e^{\mathcal{R}t}  \pa_y (u_\phi,b_\phi) \Vert_{\tilde{L}^\infty_t(\mathcal{B}^\f32)} \\ &\leq C\Big( \Vert e^{a|D_x|}\pa_y (u_0,b_0) \Vert_{\mathcal{B}^\f32} + \Vert e^{a|D_x|} u_0 \Vert_{\mathcal{B}^\f52} +  \Vert e^{a|D_x|} b_0 \Vert_{\mathcal{B}^\f52} \Big).
 \end{align*}
 this complete the proof of the proposition \ref{prop}.
\end{proof}
\begin{lemma}\label{lem C+B+A}
Let A,B and C be a smooth function on $[0,T] \times \mathbb{R}^2_+$, and $s\in ]0,1]$, $T>0$ and $\phi$ be defined as in \eqref{eq:AnPhi}, with $\dot{\theta} (t) = \Vert \pa_y A_{\phi}(t) \Vert_{\mathcal{B}^{\frac{1}{2}}} $. There exist $C \geq 1$ such that, for any $t > 0$, $\phi(t,\xi) > 0$ and for any $B,C \in \tilde{L}^2_{t,\dot{\theta}(t)}(\mathcal{B}^{s+\frac{1}{2}})$, we have
	\begin{align}
	\sum_{q\in\ZZ} 2^{2qs} \int_0^t \abs{\psca{e^{\Rcal t'} \Delta_q^h (A\p_x B)_{\phi}, e^{\Rcal t'} \Delta_q^h C_{\phi}}_{L^2}} dt' \leq C \Vert e^{\Rcal t} B_{\phi} \Vert_{\tilde{L}^2_{t,\dot{\theta}(t)}(\mathcal{B}^{s+\frac{1}{2}})}\Vert e^{\Rcal t} C_{\phi} \Vert_{\tilde{L}^2_{t,\dot{\theta}(t)}(\mathcal{B}^{s+\frac{1}{2}})}.
	\end{align} 
\end{lemma}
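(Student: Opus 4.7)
The plan is to decompose $A \partial_x B$ via Bony's horizontal paraproduct
\[
A \partial_x B = T^h_A \partial_x B + T^h_{\partial_x B} A + R^h(A, \partial_x B),
\]
and estimate each piece separately. To handle the exponential weight I exploit that $\phi(t,\xi) = (a - \lambda\theta(t))|\xi|$ is subadditive in $\xi$ whenever $a - \lambda\theta(t) \geq 0$, which gives the pointwise bound $|\widehat{(fg)_\phi}(\xi,y)| \leq (|\widehat{f_\phi}| \ast_\xi |\widehat{g_\phi}|)(\xi,y)$. By Young's inequality applied horizontally, this reduces every $L^2$ estimate of $(fg)_\phi$ to the corresponding estimate of the pointwise product $f_\phi g_\phi$, so I can work as in the non-analytic case.

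The single tool from which the factor $\dot\theta(t')$ is extracted is the pointwise bound
\[
\|\Delta_{q'}^h A_\phi\|_{L^\infty_{x,y}} \leq 2^{q'/2}\,\|\Delta_{q'}^h \partial_y A_\phi\|_{L^2_{x,y}} = d_{q'}(t')\,\dot\theta(t'), \qquad \sum_{q'} d_{q'}(t') = 1,
\]
obtained by combining the horizontal Bernstein inequality with the vertical Poincaré-type inequality $\|f(x,\cdot)\|_{L^\infty_v} \leq \|\partial_y f(x,\cdot)\|_{L^2_v}$ (which uses the Dirichlet condition on $A$). Summing over low frequencies yields $\|S^h_{q'-1} A_\phi\|_{L^\infty} \leq \dot\theta(t')$.

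For $T^h_A \partial_x B$ I apply the Hölder split $L^\infty\cdot L^2$ with $|q-q'|\leq 4$, pull out $2^{q'}$ from $\partial_x$ via horizontal Bernstein, integrate in time against $\|e^{\mathcal{R}t'}\Delta_q^h C_\phi\|_{L^2}$ and apply Cauchy--Schwarz in the measure $\dot\theta(t')\,dt'$; this produces the quantities $B_{q'}(t)^2 := \int_0^t \dot\theta(t')\|e^{\mathcal{R}t'}\Delta_{q'}^h B_\phi\|_{L^2}^2\,dt'$ and the analogs $C_q(t)$. After multiplying by $2^{2qs}$ and summing in $q$, the combinatorial sum collapses by an $\ell^1$-convolution argument because $|q'-q|\leq 4$, giving the required bound without restriction on $s$. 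For $T^h_{\partial_x B} A$ the gain of $\dot\theta$ has to come from the outer (high-frequency) slot: I use $\|\Delta_{q'}^h A_\phi\|_{L^\infty} \leq \dot\theta(t')$ together with $\|S^h_{q'-1}(\partial_x B)_\phi\|_{L^2} \leq \sum_{\tilde q \leq q'-2} 2^{\tilde q}\|\Delta_{\tilde q}^h B_\phi\|_{L^2}$. Writing $2^{\tilde q(s+1/2)}\|\Delta_{\tilde q}^h B_\phi\|_{L^2}$ as a normalized $\ell^1$-coefficient $b_{\tilde q}$ times $\|e^{\mathcal{R}t}B_\phi\|_{\tilde{L}^2_{t,\dot\theta(t)}(\mathcal{B}^{s+1/2})}$, the inner sum becomes $\sum_{\tilde q \leq q+2} 2^{\tilde q(1-s)} b_{\tilde q}$; since $1-s \geq 0$ this is controlled by $2^{(q+2)(1-s)}$, and it is precisely here that the upper constraint $s \leq 1$ is used sharply.

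The remainder $R^h(A, \partial_x B) = \sum_{q''} \tilde\Delta_{q''}^h A\cdot\Delta_{q''}^h\partial_x B$ contributes only indices $q''\geq q-N_0$; I apply Bernstein $\|\Delta_q^h f\|_{L^2}\leq 2^{q/2}\|f\|_{L^1_h L^2_v}$ and the Hölder split $L^2_h L^\infty_v \cdot L^\infty_h L^2_v$ to obtain
\[
\|\Delta_q^h(\tilde\Delta_{q''}^h A\cdot\Delta_{q''}^h\partial_x B)_\phi\|_{L^2} \lesssim 2^{q/2}\,\|\tilde\Delta_{q''}^h\partial_y A_\phi\|_{L^2}\cdot 2^{q''}\,\|\Delta_{q''}^h B_\phi\|_{L^2},
\]
and after the usual Cauchy--Schwarz and normalization the combinatorial factor becomes $\sum_{q''\geq q-N_0} 2^{(q-q'')s}$, which converges precisely because $s>0$. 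Summing the three pieces closes the lemma. The main obstacle is the middle paraproduct $T^h_{\partial_x B}A$: because $\dot\theta$ can only be released from the outer, high-frequency factor, the Hölder split is asymmetric and the estimate becomes borderline at $s=1$, which together with the convergence requirement for the remainder accounts for the restriction $s\in(0,1]$.
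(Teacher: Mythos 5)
Your overall route is the same as the paper's: Bony decomposition of $A\pa_x B$ in the horizontal variable, extraction of the weight $\dot{\theta}(t')=\Vert \pa_y A_\phi\Vert_{\mathcal{B}^{1/2}}$ through the Bernstein--Poincar\'e bound on $A_\phi$, Cauchy--Schwarz with respect to the measure $\dot{\theta}(t')\,dt'$, and normalized dyadic coefficients. Your treatment of $T^h_A\pa_x B$ and of the remainder $R^h(A,\pa_x B)$ matches the paper and is correct (including the use of $s>0$ for the remainder sum); making the passage from $(fg)_\phi$ to $f_\phi g_\phi$ explicit via subadditivity of $\phi$ is a harmless addition the paper leaves implicit.

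There is, however, a concrete gap in the middle paraproduct $T^h_{\pa_x B}A$, which is exactly the delicate term. With your isotropic H\"older split ($\Delta^h_{q'}A_\phi$ in $L^\infty_{x,y}$, $S^h_{q'-1}(\pa_x B)_\phi$ in $L^2$), your own normalization $\Vert\Delta^h_{\tilde q}B_\phi\Vert_{L^2}\simeq 2^{-\tilde q(s+1/2)}b_{\tilde q}\Vert e^{\Rcal t}B_\phi\Vert_{\tilde L^2_{t,\dot\theta}(\mathcal{B}^{s+1/2})}$ turns $2^{\tilde q}\Vert\Delta^h_{\tilde q}B_\phi\Vert_{L^2}$ into $2^{\tilde q(1/2-s)}b_{\tilde q}$, not the $2^{\tilde q(1-s)}b_{\tilde q}$ you claim. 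The low-frequency sum $\sum_{\tilde q\le q'-2}2^{\tilde q(1/2-s)}b_{\tilde q}$ is bounded by $2^{q'(1/2-s)}$ only for $s\le 1/2$; for $1/2<s\le 1$ the weight $2^{\tilde q(1/2-s)}$ blows up as $\tilde q\to-\infty$ and the sum can be infinite even with $b\in\ell^1$ (e.g.\ $b_{\tilde q}\sim \tilde q^{-2}$), so as written your argument does not cover the claimed range $s\in\,]0,1]$. The paper closes this by an anisotropic split: it measures $S^h_{q'-1}\pa_x B_\phi$ in $L^\infty_h(L^2_v)$, which via horizontal Bernstein yields $\sum_{\tilde q\le q'-2}2^{3\tilde q/2}\Vert\Delta^h_{\tilde q}B_\phi\Vert_{L^2}$ (weight $2^{\tilde q(1-s)}$ after normalization, summable up to $s=1$), and measures $\Delta^h_{q'}A_\phi$ in $L^2_h(L^\infty_v)$, which costs only $2^{-q'/2}d_{q'}\dot\theta(t')$ instead of $d_{q'}\dot\theta(t')$; the net factor $2^{-q'/2}\cdot 2^{q'(1-s)}=2^{q'(1/2-s)}$ then recombines with $2^{-q(s+1/2)}$ to give $2^{-2qs}$. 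Replacing your $L^\infty_{x,y}\times L^2$ split by this $L^\infty_h(L^2_v)\times L^2_h(L^\infty_v)$ split (i.e.\ recovering the missing half-power $2^{\tilde q/2}$ on the $B$ side and paying $2^{-q'/2}$ on the $A$ side) repairs the proof for the full range $s\in\,]0,1]$.
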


\begin{proof}
We define $I$ the integral given by 

$$ I(t) = \int_0^t \abs{\psca{e^{\Rcal t'} \Delta_q^h (A\p_x B)_{\phi}, e^{\Rcal t'} \Delta_q^h C_{\phi}}_{L^2}} dt' .$$

As in \cite{BCD2011book}, using Bony's homogeneous decomposition into para-products $A\pa_x B$ in the horizontal variable and remainders as in Definition of a tempered distribution, we can write

$$ A\pa_x B = T^{h}_A \pa_{x} B + T^h_{\pa_x B} A + R^{h}(A,\pa_x B) $$
where,
\begin{align*}
	T_A \pa_xB = \sum_{q\in\ZZ} S_{q-1}^h A \DD_q^h \pa_x B \quad\mbox{ and }\quad R^h(A,\pa_x B) = \sum_{\abs{q'-q} \leq 1} \DD_q^h A \DD_{q'}^h \pa_x B.
\end{align*}

We have the following bound of $I$

\begin{align*}
	\int_0^t \abs{\psca{e^{\Rcal t'} \Delta_q^h (A\p_x B)_{\phi}, e^{\Rcal t'} \Delta_q^h C_{\phi}}_{L^2}} dt' \leq I_{1,q} + I_{2,q} + I_{3,q}, 
\end{align*}
where
\begin{align*}
	I_{1,q} &= \int_0^t \abs{\psca{ e^{\Rcal t'} \Delta_q^h ( T^h_{A}\pa_x B)_{\phi}, e^{\Rcal t'} \Delta_q^h C_{\phi}}_{L^2}} dt'\\
	I_{2,q} &= \int_0^t \abs{\psca{ e^{\Rcal t'} \Delta_q^h ( T^h_{\pa_xB} A )_{\phi}, e^{\Rcal t'} \Delta_q^h C_{\phi}}_{L^2}} dt'\\
	I_{3,q} &= \int_0^t \abs{\psca{ e^{\Rcal t'} \Delta_q^h (R^h(A,\pa_xB))_{\phi}, e^{\Rcal t'} \Delta_q^h C_{\phi}}_{L^2}} dt'.
\end{align*}

We start by getting the estimate of the first term $I_{1,q}$, for that we need to use the support properties given in  [\cite{B1981}, Proposition 2.10] and the definition of $T^{h}_{A} \pa_x B$, we infer 

\begin{equation} \label{eq:normBCE}
	I_{1,q} \leq \sum_{|q-q'| \leq 4} \int_0^t e^{2\Rcal t'} \Vert S^h_{q'-1} A_{\phi} (t') \Vert_{L^{\infty}}\Vert \Delta_{q'}^h \partial_xB_{\phi} (t') \Vert_{L^2} \Vert \Delta_q^h C_{\phi} (t') \Vert_{L^2} dt'.
\end{equation}

By the poincarrés inequality on the interval $\lbrace 0<y<1 \rbrace$, we have the inclusion $\dot{H}^1_y \hookrightarrow L^\infty_y$ and 
\begin{align} \label{eq:normB12}
	\Vert \Delta^h_q A_{\phi}(t') \Vert_{L^{\infty}} \lesssim 2^{\frac{q}{2}} \Vert \Delta_q^h A_{\phi}(t') \Vert_{L^2_h(L^{\infty}_v)} \lesssim 2^{\frac{q}{2}} \Vert \Delta_q^h \pa_y A_{\phi}(t') \Vert_{L^2} \lesssim d_q(A_\phi) \Vert  \pa_y A_{\phi}(t') \Vert_{\mathcal{B}^{\f12}},
\end{align}
where $\lbrace d_q(A_{\phi}) \rbrace$ is a square-summable sequence with $\sum d_q(u_\phi)^2 = 1$. Then,
\begin{align*}
	\Vert S^h_{q'-1} A_{\phi} (t') \Vert_{L^{\infty}} \lesssim  \Vert  \pa_y A_{\phi}(t') \Vert_{\mathcal{B}^{\f12}},
\end{align*}
then, we replace this result in our estimate \eqref{eq:normBCE}, and combining with H\"older inequality, imply that 

\begin{align*}
	I_{1,q} &\lesssim \sum_{|q-q'| \leq 4}  \int_0^t \Vert \pa_y A_{\phi}(t') \Vert_{\mathcal{B}^{\f12}} e^{\Rcal t'} \Vert \Delta_{q'}^h \pa_x B_{\phi}(t') \Vert_{L^2} e^{\Rcal t'} \Vert \Delta_q^h C_{\phi}(t') \Vert_{L^2} dt'\\
	&\lesssim \sum_{|q-q'| \leq 4} 2^{q'} \int_0^t \Vert \pa_y A_{\phi}(t') \Vert_{\mathcal{B}^{\f12}}^\f12 e^{\Rcal t'} \Vert \Delta_{q'}^h B_{\phi}(t') \Vert_{L^2} \Vert \pa_y A_{\phi}(t') \Vert_{\mathcal{B}^{\f12}}^\f12e^{\Rcal t'} \Vert \Delta_q^h C_{\phi}(t') \Vert_{L^2} dt'\\
	&\lesssim \sum_{|q-q'| \leq 4} 2^{q'} \left(\int_0^t \Vert \pa_y A_{\phi}(t') \Vert_{\mathcal{B}^{\f12}} e^{2\Rcal t'} \Vert \Delta_{q'}^h B_{\phi} \Vert_{L^2}^2 dt'\right)^{\f12} \left(\int_0^t \Vert  \pa_y A_{\phi}(t') \Vert_{\mathcal{B}^{\f12}} e^{2\Rcal t'} \Vert \Delta_q^h C_{\phi} \Vert_{L^2}^2 dt'\right)^{\f12}.
\end{align*}
we note that $\dot{\theta}(t) \simeq \Vert  \pa_y A_{\phi}(t') \Vert_{\mathcal{B}^{\f12}}$, using the definition \eqref{def:CLweight}, we have 

\begin{align*}
	\left(\int_0^t \Vert \pa_y A_{\phi}(t') \Vert_{\mathcal{B}^{\f12}} e^{2\Rcal t'} \Vert \Delta_q^h C_{\phi} \Vert_{L^2}^2 dt'\right)^{\frac{1}{2}} &\lesssim \left(\int_0^t \dot{\theta}(t') e^{2\Rcal t'} \Vert \Delta_q^h C_{\phi} \Vert_{L^2}^2 dt'\right)^{\frac{1}{2}} \\ &\lesssim 2^{-q(s+\frac{1}{2})} d_q(C_\phi) \Vert e^{\Rcal t} C_{\phi} \Vert_{\tilde{L}^{2}_{t,\dot{\theta}}(\mathcal{B}^{s+\frac{1}{2}})}.
\end{align*}
Then,
\begin{align} \label{eq:I_1,q}
	I_{1,q} \lesssim 2^{-2qs} d_q^2 \Vert e^{\Rcal t} B_{\phi} \Vert_{\tilde{L}^{2}_{t,\dot{\theta}}(\mathcal{B}^{s+\frac{1}{2}})} \Vert e^{\Rcal t} C_{\phi} \Vert_{\tilde{L}^{2}_{t,\dot{\theta}}(\mathcal{B}^{s+\frac{1}{2}})},
\end{align}
where 
$$ d_q^2 = d_q(C_\phi) \left( \sum_{|q-q'| \leq 4} d_{q'}(B_\phi) 2^{(q-q')(s-\f12)} \right) $$
if we multiply \eqref{eq:I_1,q} by $2^{2qs}$ and summing with respect to $q\in \mathbb{Z}$, we get 
\begin{align} \label{eq:I1q}
	\sum_{q\in\ZZ} 2^{2qs} I_{1,q} \lesssim C\Vert e^{\Rcal t} B_{\phi} \Vert_{\tilde{L}^{2}_{t,\dot{\theta}}(\mathcal{B}^{s+\frac{1}{2}})} \Vert e^{\Rcal t} C_{\phi} \Vert_{\tilde{L}^{2}_{t,\dot{\theta}}(\mathcal{B}^{s+\frac{1}{2}})}.
\end{align}

\medskip

Now we move to get the estimate of the second terme, by using the support properties given in [\cite{B1981}, Proposition 2.10] and the definition of $T^h_{\pa_x B} A$, we can estimate $I_{2,q}$ in a similar way as we did for $I_{1,q}$. 

\begin{align*}
I_{2,q} (t) &\leq \int_0^t \abs{\psca{ e^{\Rcal t'} \Delta_q^h ( T^h_{\pa_xB} A )_{\phi}, e^{\Rcal t'} \Delta_q^h C_{\phi}}_{L^2}} dt' \\
&\leq \sum_{|q-q'|\leq 4} \int_0^t e^{2\Rcal t'}\Vert S^h_{q'-1} \pa_x B_\phi \Vert_{L^\infty_h(L^2_v)} \Vert \Delta_{q'}^h A_\phi \Vert_{L^2_h(L^\infty_v)} \Vert \Delta_q^h C_\phi \Vert_{L^2} dt'.
\end{align*}

As in \eqref{eq:normB12}, we can write
\begin{equation*} 
	\Vert \Delta_{q'}^h A_{\phi} \Vert_{L^2_h(L^{\infty}_v)} \lesssim \Vert \Delta_{q'}^h \pa_y A_{\phi} \Vert_{L^2} \lesssim 2^{-\frac{q'}{2}} d_{q'}(A_\phi) \Vert  \pa_y A_{\phi}(t) \Vert_{\mathcal{B}^{\f12}}.
\end{equation*}

Since $ 0<s\leq 1 $, we have 
\begin{align*}
	 I_{2,q} &\leq  \sum_{|q-q'|\leq 4} \int_0^t e^{2\Rcal t'}\Vert S^h_{q'-1} \pa_x B_\phi \Vert_{L^\infty_h(L^2_v)} \Vert \Delta_{q'}^h A_\phi \Vert_{L^2_h(L^\infty_v)} \Vert \Delta_q^h C_\phi \Vert_{L^2} dt'\\
	&\leq  \sum_{|q-q'|\leq 4} \int_0^t 2^{-\frac{q'}{2}} d_{q'}(A_\phi) e^{\Rcal t'}\Vert S^h_{q'-1} \pa_x B_\phi \Vert_{L^\infty_h(L^2_v)} \Vert  \pa_y A_{\phi}(t)\Vert_{\mathcal{B}^{\f12}}e^{\Rcal t'}\Vert \Delta_q^h C_\phi \Vert_{L^2} dt'\\
	&\leq  \sum_{|q-q'|\leq 4} \int_0^t 2^{-\frac{q'}{2}} d_{q'}(A_\phi) e^{\Rcal t'} \sum_{l\leq q'-2} 2^l 2^{\frac{l}{2}} \Vert \Delta_l^h B_\phi \Vert_{L^2} \Vert  \pa_y A_{\phi}(t) \Vert_{\mathcal{B}^{\f12}}e^{\Rcal t'}\Vert \Delta_q^h C_\phi \Vert_{L^2} dt'\\
	&\leq  \sum_{|q-q'|\leq 4} \int_0^t 2^{-\frac{q'}{2}} d_{q'}(A_\phi) \sum_{l\leq q'-2} 2^{\frac{3l}{2}} 2^{-l(s+\f12)} d_l(B_{\phi}) \Vert e^{\Rcal t'} B_\phi \Vert_{B^{s+\f12}} \Vert  \pa_y A_{\phi}(t) \Vert_{\mathcal{B}^{\f12}}\Vert e^{\Rcal t'}\Delta_q^h C_\phi \Vert_{L^2} dt'\\
	&\leq \sum_{|q-q'|\leq 4} \int_0^t 2^{-\frac{q'}{2}} d_{q'}(A_\phi)2^{q'(1-s)}  \Vert e^{\Rcal t'} B_\phi \Vert_{B^{s+\f12}} \Vert  \pa_y A_{\phi}(t) \Vert_{\mathcal{B}^{\f12}}\Vert e^{\Rcal t'}\Delta_q^h C_\phi \Vert_{L^2} dt'\\
	&\leq \sum_{|q-q'|\leq 4} \int_0^t 2^{-\frac{q'}{2}} d_{q'}(A_\phi) d_{q}(C_\phi)2^{q'(1-s)}2^{-q(s+\f12)}  \Vert e^{\Rcal t'} B_\phi \Vert_{B^{s+\f12}} \Vert  \pa_y A_{\phi}(t) \Vert_{\mathcal{B}^{\f12}}\Vert e^{\Rcal t'} C_\phi \Vert_{B^{s+\f12}} dt'\\
	&\leq d_q^2 2^{-2qs}  \Vert e^{\Rcal t} B_\phi \Vert_{\tilde{L}^2_{t,\dot{\theta}(t)}(B^{s+\f12})}  \Vert e^{\Rcal t} C_\phi \Vert_{\tilde{L}^2_{t,\dot{\theta}(t)}(B^{s+\f12})}
\end{align*}
where
\begin{align*}
	d_q^2 = d_q(C_\phi) \left(\sum_{|q-q'|\leq 4}d_{q'} 2^{(q -q')(s-\frac{1}{2})}\right)
\end{align*}
is a summable sequence of positive constants. Summing with respect to $q \in \ZZ$, and using Fubini's theorem, we get
\begin{align} \label{eq:I2q}
	\sum_{q\in\ZZ} 2^{2qs} I_{2,q} \lesssim C\Vert e^{\Rcal t} B_{\phi} \Vert_{\tilde{L}^2_{t,\dot{\theta}}(\mathcal{B}^{s+\frac{1}{2}})} \Vert e^{\Rcal t} B_{\phi} \Vert_{\tilde{L}^2_{t,\dot{\theta}}(\mathcal{B}^{s+\frac{1}{2}})},
\end{align}
where we recall that $\dot{\theta} (t) \simeq \Vert \pa_y A_{\phi} \Vert_{\mathcal{B}^{\frac{1}{2}}}$.

To end this proof, it remains to estimate $I_{3,q}$(is the rest term). Using the support properties given in [\cite{B1981}, Proposition 2.10], the definition of $R^h(A,\pa_x B)$ and Bernstein lemma \ref{le:Bernstein}, we can write
\begin{align*}
	I_{3,q} &= \int_0^t \abs{\psca{ e^{\Rcal t'} \Delta_q^h (R^h(A,\pa_xB))_{\phi}, e^{\Rcal t'} \Delta_q^h C_{\phi}}_{L^2}} dt'\\ &\leq 2^{\frac{q}{2}} \sum_{q'\geq q-3}\int_0^t e^{2\Rcal t'} \Vert \DD^h_{q'} A_{\phi}\Vert_{L^{2}_h(L^{\infty}_v)} \Vert \Delta_{q'}^h \pa_x B_{\phi}  \Vert_{L^{2}} \Vert \Delta_{q}^h  C_{\phi} \Vert_{L^{2}}dt' \\
	&\leq 2^{\frac{q}{2}} \sum_{q'\geq q-3}\int_0^t e^{2\Rcal t'}   2^{q'(1-\frac{1}{2})}  \Vert \pa_y A_{\phi}\Vert_{\mathcal{B}^{\frac{1}{2}}} \Vert \Delta_{q'}^h B_{\phi}  \Vert_{L^{2}} \Vert \Delta_{q}^h C_{\phi} \Vert_{L^{2}} dt'  \\
	&\leq 2^{\frac{q}{2}} \sum_{q'\geq q-3} \int_0^t e^{2\Rcal t'} 2^{\frac{q'}{2}}  \Vert \pa_y A_{\phi}\Vert_{\mathcal{B}^{\frac{1}{2}}} \Vert \Delta_{q'}^h B_{\phi}  \Vert_{L^{2}} \Vert \Delta_{q}^h C_{\phi} \Vert_{L^{2}}dt'.
\end{align*}
Since $ 0<s\leq 1 $, we have
\begin{align*}
	 I_{3,q} &\leq 2^{\frac{q}{2}} \sum_{q'\geq q-3}\int_0^t  2^{\frac{q'}{2}}  \Vert \pa_y A_{\phi}\Vert_{\mathcal{B}^{\frac{1}{2}}} \Vert \Delta_{q'}^h  e^{\Rcal t'} B_{\phi}  \Vert_{L^{2}} \Vert \Delta_{q}^h e^{\Rcal t'} C_{\phi} \Vert_{L^{2}}dt' \\
	&\leq  2^{\frac{q}{2}} \sum_{q'\geq q-3}\int_0^t 2^{\frac{q'}{2}}  d_{q'}(B_\phi) 2^{-q'(s+\frac{1}{2})} \Vert e^{\Rcal t'} B_{\phi} \Vert_{\mathcal{B}^{s+\frac{1}{2}}}  \Vert \pa_y A_{\phi} \Vert_{\mathcal{B}^{\frac{1}{2}}}   d_{q}(C_\phi) 2^{-q(s+\frac{1}{2})} \Vert e^{\Rcal t'} C_{\phi} \Vert_{\mathcal{B}^{s+\frac{1}{2}}}dt'  \\
	&\leq d_q(C_\phi) 2^{-2qs} \int_0^t \Vert e^{\Rcal t} B_{\phi} \Vert_{\mathcal{B}^{s+\frac{1}{2}}} \Vert \pa_y u_{\phi} \Vert_{\mathcal{B}^{\frac{1}{2}}} \Vert e^{\Rcal t} C_{\phi} \Vert_{\mathcal{B}^{s+\frac{1}{2}}} \left(  \sum_{q'\geq q-3} d_{q'}(w_\phi) 2^{(q-q')s} \right)dt' \\
	&\leq d_q^2  2^{-2qs} \Vert e^{\Rcal t} B_\phi \Vert_{\tilde{L}^2_{t,\dot{\theta}(t)}(B^{s+\f12})}  \Vert e^{\Rcal t} C_\phi \Vert_{\tilde{L}^2_{t,\dot{\theta}(t)}(B^{s+\f12})},
\end{align*}
where
\begin{align*}
d_q^2 = d_q(C_\phi) \left(  \sum_{q'\geq k-3} d_{q'}(B_\phi) 2^{(q-q')s} \right)
\end{align*}
is a summable sequence of positive constants. Summing with respect to $q \in \ZZ$, and using Fubini's theorem, we finally obtain
\begin{align} \label{eq:I3q}
	\sum_{q\in\ZZ} 2^{2qs} I_{3,q} \lesssim \Vert e^{\Rcal t} B_\phi \Vert_{\tilde{L}^2_{t,\dot{\theta}(t)}(B^{s+\f12})}  \Vert e^{\Rcal t} C_\phi \Vert_{\tilde{L}^2_{t,\dot{\theta}(t)}(B^{s+\f12})}.
\end{align}
Lemma \ref{lem C+B+A} is then proved by summing Estimates \eqref{eq:I1q}, \eqref{eq:I2q} and \eqref{eq:I3q}. \hfill 
\end{proof}

\begin{lemma} \label{lem:ApaxA}
Let A,B and C be a smooth function on $[0,T] \times \mathbb{R}^2_+$, and $s\in ]0,1]$, $T>0$ and $\phi$ be defined as in \eqref{eq:AnPhi}, with $\dot{\theta} (t) = \Vert \pa_y A_{\phi}(t) \Vert_{\mathcal{B}^{\frac{1}{2}}} $ and $B(t,x,y) = -\int_0^y \pa_x A(t,x,y')dy'$. There exist $C \geq 1$ such that, for any $t > 0$, $\phi(t,\xi) > 0$ and for any $B,C \in \tilde{L}^2_{t,\dot{\theta}(t)}(\mathcal{B}^{s+\frac{1}{2}})$, we have
		\begin{align}
	\sum_{q\in\ZZ} 2^{2qs} \int_0^t \abs{\psca{e^{\Rcal t'} \Delta_q^h (B\p_y A)_{\phi}, e^{\Rcal t'} \Delta_q^h C_{\phi}}_{L^2}} dt' \leq C \Vert e^{\Rcal t} A_{\phi} \Vert_{\tilde{L}^2_{t,\dot{\theta}(t)}(\mathcal{B}^{s+\frac{1}{2}})}\Vert e^{\Rcal t} C_{\phi} \Vert_{\tilde{L}^2_{t,\dot{\theta}(t)}(\mathcal{B}^{s+\frac{1}{2}})}.
	\end{align}
\end{lemma}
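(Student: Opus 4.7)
My plan is to mirror the three-step strategy of Lemma \ref{lem C+B+A} and apply Bony's horizontal paraproduct decomposition
$$B\pa_y A = T^h_B\pa_y A + T^h_{\pa_y A}B + R^h(B,\pa_y A),$$
splitting the quantity to be estimated into three pieces $J_{1,q}$, $J_{2,q}$, $J_{3,q}$ indexed exactly as $I_{1,q}$, $I_{2,q}$, $I_{3,q}$ were in the previous lemma. In Lemma \ref{lem C+B+A} the low-frequency coefficient was $A$, whose $L^\infty$ bound $\Vert\Delta^h_{q'}A_\phi\Vert_{L^\infty}\lesssim d_{q'}\Vert\pa_y A_\phi\Vert_{\mathcal{B}^{\frac12}}$ came from Poincar\'e in $y$ plus Bernstein in $x$. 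Here the coefficient in $J_{1,q}$ is $B$, which is not independent but recovered from $A$ through $\pa_y B = -\pa_x A$ with $B|_{y=0}=0$, so a different bound is required.

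The first step is to establish the anisotropic substitute
$$\Vert\Delta^h_q B_\phi\Vert_{L^\infty_v L^2_h}\leq \Vert\Delta^h_q\pa_x A_\phi\Vert_{L^2},$$
which follows immediately from $B(\cdot,y) = -\int_0^y \pa_x A_\phi(\cdot,y')\,dy'$ and Cauchy--Schwarz in $y'$. This is the geometric heart of the argument: it converts every occurrence of $B$ back into a horizontal-derivative Besov norm of $A$, at the cost of one power of $2^{q}$ through Bernstein in the horizontal variable.

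With this estimate in hand, the three pieces are treated in parallel with the proof of Lemma \ref{lem C+B+A}. For $J_{1,q}$, I would invoke the mixed-norm H\"older inequality $\Vert fg\Vert_{L^2}\leq \Vert f\Vert_{L^\infty_v L^2_h}\Vert g\Vert_{L^2_v L^\infty_h}$, use the anisotropic bound above on $f = S^h_{q'-1}B_\phi$, and apply horizontal Bernstein to $g=\Delta^h_{q'}\pa_y A_\phi$ so that $\Vert g\Vert_{L^2_v L^\infty_h}\lesssim 2^{q'/2}\Vert\Delta^h_{q'}\pa_y A_\phi\Vert_{L^2}$. A $\dot{\theta}(t)$-weighted Cauchy--Schwarz in time, exactly as in the derivation of \eqref{eq:I_1,q}, then yields the target $d_q^2\, 2^{-2qs}\Vert e^{\mathcal{R}t}A_\phi\Vert_{\tilde L^2_{t,\dot{\theta}(t)}(\mathcal{B}^{s+\frac12})}\Vert e^{\mathcal{R}t}C_\phi\Vert_{\tilde L^2_{t,\dot{\theta}(t)}(\mathcal{B}^{s+\frac12})}$ contribution. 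The remainder $J_{3,q}$ is analogous, with the summation over $q'\geq q-3$ in place of $|q-q'|\leq 4$. For $J_{2,q}$ the roles reverse: $\pa_y A$ now plays the low-frequency coefficient role, so the $L^\infty$ bound already used in Lemma \ref{lem C+B+A} applies directly; meanwhile $B$ appears as the high-frequency factor, controlled via $\Vert\Delta^h_{q'}B_\phi\Vert_{L^2_h L^\infty_v}$ by another combination of Bernstein in $x$ and the Poincar\'e trade $\pa_y B=-\pa_x A$.

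The main obstacle I anticipate is the careful bookkeeping of the $2^{q-q'}$ exponents produced by the Bernstein step when converting $B$ into $\pa_x A$, combined with the paraproduct support constraint $|q-q'|\leq 4$ (or the tail $q'\geq q-3$ for $R^h$), which must collapse into a genuinely square-summable slow sequence in $q$. As in Lemma \ref{lem C+B+A}, this bookkeeping is exactly what forces the restriction $0<s\leq 1$ stated in the lemma. Pushing beyond $s=1$, as is needed in the proof of Proposition \ref{prop} for the $\mathcal{B}^{\frac32}$ and $\mathcal{B}^{\frac52}$ estimates, is precisely where the sharper interpolation inequality $\Vert\Delta^h_q B_\phi\Vert_{L^\infty}\lesssim d_q\, 2^{q/2}\Vert A_\phi\Vert_{\mathcal{B}^{\frac32}}^{\frac12}\Vert\pa_y A_\phi\Vert_{\mathcal{B}^{\frac12}}^{\frac12}$ must replace the cruder bound used here.
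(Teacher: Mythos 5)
Your skeleton (Bony's horizontal decomposition of $B\pa_y A$, converting $B$ back into $\pa_x A$ through $B=-\int_0^y\pa_x A\,dy'$, then a $\dot{\theta}$-weighted Cauchy--Schwarz in time) is the same as the paper's, and your treatment of the middle term $T^h_{\pa_y A}B$ coincides with the paper's $K_{2,q}$. The gap is in the leading paraproduct $T^h_{B}\pa_y A$. With your H\"older split $\Vert fg\Vert_{L^2}\le\Vert f\Vert_{L^\infty_v L^2_h}\Vert g\Vert_{L^2_v L^\infty_h}$, the horizontal Bernstein factor $2^{q'/2}$ is paid on the high-frequency block $g=\Delta_{q'}^h\pa_y A_\phi$, so that this factor is converted entirely into $d_{q'}\Vert\pa_y A_\phi\Vert_{\mathcal{B}^{\frac12}}\simeq d_{q'}\dot{\theta}(t')$ with no gain left, and the $A$-norm must then be extracted from $f=S^h_{q'-1}B_\phi$: after the weighted Cauchy--Schwarz, $\Vert S^h_{q'-1}\pa_x A_\phi\Vert_{L^2}$ contributes the factor $\sum_{l\le q'-2}d_l\,2^{l(\frac12-s)}\,\Vert e^{\Rcal t}A_\phi\Vert_{\tilde{L}^2_{t,\dot{\theta}(t)}(\mathcal{B}^{s+\frac12})}$. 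For $s<\frac12$ this sum is $\lesssim 2^{q'(\frac12-s)}$ and your bookkeeping closes; but for $s\ge\frac12$ it produces no decay in $q'$ at all, and the assembled bound reads $2^{2qs}J_{1,q}\lesssim 2^{q(s-\frac12)}d_q\bigl(\sum_{|q-q'|\le4}d_{q'}\bigr)\Vert e^{\Rcal t}A_\phi\Vert_{\tilde{L}^2_{t,\dot{\theta}(t)}(\mathcal{B}^{s+\frac12})}\Vert e^{\Rcal t}C_\phi\Vert_{\tilde{L}^2_{t,\dot{\theta}(t)}(\mathcal{B}^{s+\frac12})}$, which is not summable in $q$ for $s>\frac12$ and fails marginally at $s=\frac12$. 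So the proposal does not reach the claimed range $s\in\,]0,1]$, and in particular misses $s=\frac12$, the very index at which the lemma is applied in the proof of Theorem \ref{th:1}.

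The repair is to allocate Bernstein the other way, as in the paper's estimate of $K_{1,q}$: keep $\Delta_{q'}^h\pa_y A_\phi$ in plain $L^2$, which yields $d_{q'}2^{-q'/2}\Vert\pa_y A_\phi\Vert_{\mathcal{B}^{\frac12}}$ and so retains the gain $2^{-q'/2}$, and estimate $S^h_{q'-1}B_\phi$ in full $L^\infty$, paying the horizontal Bernstein at the low frequencies, $\Vert\Delta_l^h B_\phi\Vert_{L^\infty}\lesssim 2^{3l/2}\Vert\Delta_l^h A_\phi\Vert_{L^2}$ for $l\le q'-2$. The $A$-factor then comes out as $2^{-q'/2}\sum_{l\le q'-2}d_l 2^{l(1-s)}\lesssim 2^{q'(\frac12-s)}$, valid for all $s\le1$, and $2^{2qs}K_{1,q}$ is controlled by $d_q\sum_{|q-q'|\le4}d_{q'}2^{(q-q')(s-\frac12)}$ times the two weighted norms, which is summable. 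Since $l\le q'-2$, paying $2^{l/2}$ instead of $2^{q'/2}$ is exactly the factor $2^{(q'-l)/2}$ your split gives away, and it is what moves the threshold from $s<\frac12$ to $s\le1$. Be careful with the remainder as well: treating $R^h(B,\pa_y A)$ ``analogously'' to your $J_{1,q}$ leads, after the same bookkeeping, to a factor $2^{-(q'-q)(s-\frac12)}$ summed over $q'\ge q-3$, which converges only for $s>\frac12$; the paper instead pays the Bernstein $2^{q/2}$ on the output block, keeps $\pa_y A_\phi$ in $L^2$ and uses $\Vert\Delta_{q'}^h B_\phi\Vert_{L^\infty_v(L^2_h)}\lesssim 2^{q'}\Vert\Delta_{q'}^h A_\phi\Vert_{L^2}$, producing $2^{(q-q')s}$, harmless for every $s>0$. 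Your closing remark on the sharper interpolation bound needed beyond $s=1$ is correct and matches what is done in Proposition \ref{prop}.
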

\begin{proof}
We define the function $K(t)$ by the following formula
$$ K(t) = \int_0^t \abs{\psca{e^{\Rcal t'} \Delta_q^h (B\p_y A)_{\phi}, e^{\Rcal t'} \Delta_q^h C_{\phi}}_{L^2}} dt'. $$

As in \cite{BCD2011book}, using Bony's homogeneous decomposition into para-products $A\pa_x B$ in the horizontal variable and remainders as in Definition of a tempered distribution, we can write

$$ B\pa_y A = T^{h}_B \pa_{y} A + T^h_{\pa_y A} B + R^{h}(B,\pa_y A) $$
where,
\begin{align*}
	T_B \pa_yA = \sum_{q\in\ZZ} S_{q-1}^h B \DD_q^h \pa_y A \quad\mbox{ and }\quad R^h(B,\pa_y A) = \sum_{\abs{q'-q} \leq 1} \DD_q^h B \DD_{q'}^h \pa_y A.
\end{align*}

We have the following bound of $K$

\begin{align*}
	K(t) = \int_0^t \abs{\psca{e^{\Rcal t'} \Delta_q^h (B\p_y A)_{\phi}, e^{\Rcal t'} \Delta_q^h C_{\phi}}_{L^2}} dt' \leq K_{1,q} + K_{2,q} + K_{3,q}, 
\end{align*}
where
\begin{align*}
	K_{1,q} &= \int_0^t \abs{\psca{ e^{\Rcal t'} \Delta_q^h ( T^{h}_B \pa_{y} A)_{\phi}, e^{\Rcal t'} \Delta_q^h C_{\phi}}_{L^2}} dt'\\
	K_{2,q} &= \int_0^t \abs{\psca{ e^{\Rcal t'} \Delta_q^h ( T^h_{\pa_y A} B )_{\phi}, e^{\Rcal t'} \Delta_q^h C_{\phi}}_{L^2}} dt'\\
	K_{3,q} &= \int_0^t \abs{\psca{ e^{\Rcal t'} \Delta_q^h (R^{h}(B,\pa_y A))_{\phi}, e^{\Rcal t'} \Delta_q^h C_{\phi}}_{L^2}} dt'.
\end{align*}

We start by getting the estimate of the first term $K_{1,q}$, for that we need to use the support properties given in  [\cite{B1981}, Proposition 2.10] and the definition of $T^{h}_{B} \pa_y A$, we infer 

\begin{align*} 
	K_{1,q} &\leq \sum_{|q-q'| \leq 4} \int_0^t e^{2\Rcal t'} \Vert S^h_{q'-1} B_{\phi} (t') \Vert_{L^{\infty}}\Vert \Delta_{q'}^h \partial_yA_{\phi} (t') \Vert_{L^2} \Vert \Delta_q^h C_{\phi} (t') \Vert_{L^2} dt' \\
	&\leq \int_0^t \sum_{|q-q'| \leq 4}  e^{2\Rcal t'} \Vert S^h_{q'-1} B_{\phi} (t') \Vert_{L^{\infty}} d_{q'}(A_\phi) 2^{\frac{-q'}{2}} \Vert \pa_y A_{\phi}(t') \Vert_{\mathcal{B}^{\f12}} \Vert \Delta_q^h C_{\phi} (t') \Vert_{L^2} dt',
\end{align*}

then we have that 

\begin{equation} \label{eq:normK}
K_{1,q} \leq \int_0^t \sum_{|q-q'| \leq 4}d_{q'}(A_\phi) 2^{\frac{-q'}{2}} e^{2\Rcal t'} \Vert S^h_{q'-1} B_{\phi} (t') \Vert_{L^{\infty}}  \Vert \pa_y A_{\phi}(t') \Vert_{\mathcal{B}^{\f12}} \Vert \Delta_q^h C_{\phi} (t') \Vert_{L^2} dt'.
\end{equation}

By the poincarré's inequality on the interval $\lbrace 0<y<1 \rbrace$, we have the inclusion $\dot{H}^1_y \hookrightarrow L^\infty_y$ and 
\begin{align*} 
	\Vert \Delta^h_q B_{\phi}(t') \Vert_{L^{\infty}} &\lesssim \int_0^1 \Vert \Delta_q^h \pa_x A_{\phi}(t',x,y') \Vert_{L^{\infty}_{h}} dy'\lesssim 2^\frac{q}{2} \int_0^1 \Vert \Delta_q^h \pa_x A_{\phi}(t',x,y') \Vert_{L^2_{h}} dy' \\ &\lesssim 2^{\frac{q}{2}} 2^{q}  \int_0^1 \Vert \Delta_q^h A_{\phi}(t',x,y') \Vert_{L^2_{h}} dy' \lesssim 2^{\frac{3q}{2}}\Vert \Delta_q^h  A_{\phi}(t') \Vert_{L^2} 
\end{align*}
 Then,
\begin{align}\label{eq:normK1}
	\Vert S^h_{q'-1} B_{\phi} (t') \Vert_{L^{\infty}} \lesssim 2^{\frac{3q}{2}} \Vert \Delta_q^h A_{\phi}(t') \Vert_{L^2},
\end{align}
then, we replace this result in our estimate \eqref{eq:normK}, and combining with H\"older inequality, imply that 

\begin{align*}
	K_{1,q} &\lesssim \int_0^t \sum_{|q-q'| \leq 4}d_{q'}(A_\phi) 2^{\frac{-q'}{2}} e^{2\Rcal t'} \Vert S^h_{q'-1} B_{\phi} (t') \Vert_{L^{\infty}}  \Vert \pa_y A_{\phi}(t') \Vert_{\mathcal{B}^{\f12}} \Vert \Delta_q^h C_{\phi} (t') \Vert_{L^2} dt'\\
	&\lesssim \int_0^t \sum_{|q-q'| \leq 4}d_{q'}(A_\phi) 2^{\frac{-q'}{2}} \sum_{l\leq q'-2} 2^{\frac{3l}{2}} e^{\Rcal t'} \Vert \Delta_l^h A_{\phi}(t') \Vert_{L^2}  \Vert \pa_y A_{\phi}(t') \Vert_{\mathcal{B}^{\f12}}e^{\Rcal t'} \Vert \Delta_q^h C_{\phi} (t') \Vert_{L^2} dt' \\
	&\lesssim \int_0^t \sum_{|q-q'| \leq 4}d_{q'}(A_\phi) 2^{\frac{-q'}{2}} \sum_{l\leq q'-2} d_l 2^{\frac{3l}{2}}2^{-l(s+\f12)} \Vert e^{\Rcal t'} A_{\phi}(t') \Vert_{\mathcal{B}^{s+\f12}}  \Vert \pa_y A_{\phi}(t') \Vert_{\mathcal{B}^{\f12}} \Vert e^{\Rcal t'}\Delta_q^h C_{\phi} (t') \Vert_{L^2} dt' \\
	&\lesssim \int_0^t \sum_{|q-q'| \leq 4}d_{q'}(A_\phi) 2^{\frac{-q'}{2}} 2^{q'(1-s)}\Vert e^{\Rcal t'} A_{\phi}(t') \Vert_{\mathcal{B}^{s+\f12}}  \Vert \pa_y A_{\phi}(t') \Vert_{\mathcal{B}^{\f12}} \Vert e^{\Rcal t'}\Delta_q^h C_{\phi} (t') \Vert_{L^2} dt' \\
	&\lesssim \sum_{|q-q'| \leq 4} d_{q'}(A_\phi) 2^{\frac{-q'}{2}} 2^{q'(1-s)} \left(\int_0^t \Vert \pa_y A_{\phi}(t') \Vert_{\mathcal{B}^{\f12}}  \Vert e^{\Rcal t'} A_{\phi} \Vert_{\mathcal{B}^{s+\f12}}^2 dt'\right)^{\f12} \\ &\times \left(\int_0^t \Vert  \pa_y A_{\phi}(t') \Vert_{\mathcal{B}^{\f12}} e^{2\Rcal t'} \Vert \Delta_q^h C_{\phi} \Vert_{L^2}^2 dt'\right)^{\f12}.
\end{align*}
we note that $\dot{\theta}(t) \simeq \Vert  \pa_y A_{\phi}(t') \Vert_{\mathcal{B}^{\f12}}$, using the definition \eqref{def:CLweight}, we have 

\begin{align*}
	\left(\int_0^t \Vert \pa_y A_{\phi}(t') \Vert_{\mathcal{B}^{\f12}} e^{2\Rcal t'} \Vert \Delta_q^h C_{\phi} \Vert_{L^2}^2 dt'\right)^{\frac{1}{2}} &\lesssim \left(\int_0^t \dot{\theta}(t') e^{2\Rcal t'} \Vert \Delta_q^h C_{\phi} \Vert_{L^2}^2 dt'\right)^{\frac{1}{2}} \\ &\lesssim 2^{-q(s+\frac{1}{2})} d_q(C_\phi) \Vert e^{\Rcal t} C_{\phi} \Vert_{\tilde{L}^{2}_{t,\dot{\theta}}(\mathcal{B}^{s+\frac{1}{2}})}.
\end{align*}
Then,
\begin{align} \label{eq:K_1,q}
	I_{1,q} \lesssim 2^{-2qs} d_q^2 \Vert e^{\Rcal t} A_{\phi} \Vert_{\tilde{L}^{2}_{t,\dot{\theta}}(\mathcal{B}^{s+\frac{1}{2}})} \Vert e^{\Rcal t} C_{\phi} \Vert_{\tilde{L}^{2}_{t,\dot{\theta}}(\mathcal{B}^{s+\frac{1}{2}})},
\end{align}
where 
$$ d_q^2 = d_q(C_\phi) \left( \sum_{|q-q'| \leq 4} d_{q'}(A_\phi) 2^{(q-q')(s-\f12)} \right) $$
if we multiply $\eqref{eq:K_1,q}$ by $2^{2qs}$ and summing with respect to $q\in \mathbb{Z}$, we get 
\begin{align} \label{eq:K1q}
	\sum_{q\in\ZZ} 2^{2qs} K_{1,q} \lesssim C\Vert e^{\Rcal t} A_{\phi} \Vert_{\tilde{L}^{2}_{t,\dot{\theta}}(\mathcal{B}^{s+\frac{1}{2}})} \Vert e^{\Rcal t} C_{\phi} \Vert_{\tilde{L}^{2}_{t,\dot{\theta}}(\mathcal{B}^{s+\frac{1}{2}})}.
\end{align}

\medskip

Now we move to get the estimate of the second terme, by using the support properties given in [\cite{B1981}, Proposition 2.10] and the definition of $T^h_{\pa_y A} B$, we can estimate $K_{2,q}$ in a similar way as we did for $K_{1,q}$. 

\begin{align*}
K_{2,q} (t) &\leq \int_0^t \abs{\psca{ e^{\Rcal t'} \Delta_q^h ( T^h_{\pa_yA} B )_{\phi}, e^{\Rcal t'} \Delta_q^h C_{\phi}}_{L^2}} dt' \\
&\leq \sum_{|q-q'|\leq 4} \int_0^t e^{2\Rcal t'}\Vert S^h_{q'-1} \pa_y A_\phi \Vert_{L^\infty_h(L^2_v)} \Vert \Delta_{q'}^h B_\phi \Vert_{L^2_h(L^\infty_v)} \Vert \Delta_q^h C_\phi \Vert_{L^2} dt'.
\end{align*}

As in \eqref{eq:normK1}, we can write
\begin{equation*} 
	\Vert \Delta_{q'}^h B_{\phi} \Vert_{L^2_h(L^{\infty}_v)} \lesssim  2^{q'} \int_0^y\Vert \Delta_{q'}^h A_{\phi}(t,x,y') \Vert_{L_h^2}dy' \lesssim 2^{q'} \Vert  \Delta_{q'}^h A_{\phi}(t) \Vert_{L^2}.
\end{equation*}

Since $ 0<s\leq 1 $, we have 
\begin{align*}
	 K_{2,q} &\leq  \sum_{|q-q'|\leq 4} \int_0^t e^{2\Rcal t'}\Vert S^h_{q'-1} \pa_y A_\phi \Vert_{L^\infty_h(L^2_v)} \Vert \Delta_{q'}^h B_\phi \Vert_{L^2_h(L^\infty_v)} \Vert \Delta_q^h C_\phi \Vert_{L^2} dt'\\
	&\leq  \sum_{|q-q'|\leq 4} \int_0^t 2^{q'} e^{\Rcal t'}\Vert S^h_{q'-1} \pa_y A_\phi \Vert_{L^\infty_h(L^2_v)} \Vert  \Delta_{q'}^h A_{\phi}(t) \Vert_{L^2}e^{\Rcal t'}\Vert \Delta_q^h C_\phi \Vert_{L^2} dt'\\
	&\leq  \sum_{|q-q'|\leq 4} \int_0^t 2^{q'} e^{\Rcal t'} \Vert \pa_y A_\phi \Vert_{\mathcal{B}^\f12}\Vert  \Delta_{q'}^h A_{\phi}(t) \Vert_{L^2}e^{\Rcal t'}\Vert \Delta_q^h C_\phi \Vert_{L^2} dt'\\
	&\leq \sum_{|q-q'|\leq 4} \int_0^t 2^{q'} \Vert \pa_y A_\phi \Vert_{\mathcal{B}^\f12}\Vert e^{\Rcal t'} \Delta_{q'}^h A_{\phi}(t) \Vert_{L^2}\Vert e^{\Rcal t'}\Delta_q^h C_\phi \Vert_{L^2} dt'\\
	&\lesssim \sum_{|q-q'| \leq 4} 2^{q'} \left(\int_0^t \Vert \pa_y A_{\phi}(t') \Vert_{\mathcal{B}^{\f12}}  \Vert e^{\Rcal t'}\Delta_{q'}^h A_{\phi} \Vert_{L^2}^2 dt'\right)^{\f12} \times \left(\int_0^t \Vert  \pa_y A_{\phi}(t') \Vert_{\mathcal{B}^{\f12}} \Vert e^{\Rcal t'} \Delta_q^h C_{\phi} \Vert_{L^2}^2 dt'\right)^{\f12}.
\end{align*}

we note that $\dot{\theta}(t) \simeq \Vert  \pa_y A_{\phi}(t') \Vert_{\mathcal{B}^{\f12}}$, using the definition \eqref{def:CLweight}, we have 
\begin{align*}
	\left(\int_0^t \Vert \pa_y A_{\phi}(t') \Vert_{\mathcal{B}^{\f12}} e^{2\Rcal t'} \Vert \Delta_q^h C_{\phi} \Vert_{L^2}^2 dt'\right)^{\frac{1}{2}} &\lesssim \left(\int_0^t \dot{\theta}(t') e^{2\Rcal t'} \Vert \Delta_q^h C_{\phi} \Vert_{L^2}^2 dt'\right)^{\frac{1}{2}} \\ &\lesssim 2^{-q(s+\frac{1}{2})} d_q(C_\phi) \Vert e^{\Rcal t} C_{\phi} \Vert_{\tilde{L}^{2}_{t,\dot{\theta}}(\mathcal{B}^{s+\frac{1}{2}})}.
\end{align*}
Then,
\begin{align} \label{eq:K_2,q}
	K_{2,q} \lesssim 2^{-2qs} d_q^2 \Vert e^{\Rcal t} A_{\phi} \Vert_{\tilde{L}^{2}_{t,\dot{\theta}}(\mathcal{B}^{s+\frac{1}{2}})} \Vert e^{\Rcal t} C_{\phi} \Vert_{\tilde{L}^{2}_{t,\dot{\theta}}(\mathcal{B}^{s+\frac{1}{2}})},
\end{align}

where
\begin{align*}
	d_q^2 = d_q(C_\phi) \left(\sum_{|q-q'|\leq 4}d_{q'} 2^{(q -q')(s-\frac{1}{2})}\right)
\end{align*}
is a summable sequence of positive constants. Summing with respect to $q \in \ZZ$, and using Fubini's theorem, we get
\begin{align} \label{eq:K2q}
	\sum_{q\in\ZZ} 2^{2qs} K_{2,q} \lesssim C\Vert e^{\Rcal t} A_{\phi} \Vert_{\tilde{L}^2_{t,\dot{\theta}}(\mathcal{B}^{s+\frac{1}{2}})} \Vert e^{\Rcal t} C_{\phi} \Vert_{\tilde{L}^2_{t,\dot{\theta}}(\mathcal{B}^{s+\frac{1}{2}})},
\end{align}
where we recall that $\dot{\theta} (t) \simeq \Vert \pa_y A_{\phi} \Vert_{\mathcal{B}^{\frac{1}{2}}}$.

To end this proof, it remains to estimate $K_{3,q}$(is the rest term). Using the support properties given in [\cite{B1981}, Proposition 2.10], the definition of $R^h(B,\pa_y A)$ and Bernstein lemma \ref{le:Bernstein}, we can write
\begin{align*}
	K_{3,q} &= \int_0^t \abs{\psca{ e^{\Rcal t'} \Delta_q^h (R^h(B,\pa_yA))_{\phi}, e^{\Rcal t'} \Delta_q^h C_{\phi}}_{L^2}} dt'\\ &\leq 2^{\frac{q}{2}} \sum_{q'\geq q-3}\int_0^t e^{2\Rcal t'} \Vert \DD^h_{q'} B_{\phi}\Vert_{L^{2}_h(L^{\infty}_v)} \Vert \Delta_{q'}^h \pa_y A_{\phi}  \Vert_{L^{2}} \Vert \Delta_{q}^h  C_{\phi} \Vert_{L^{2}}dt'.
\end{align*}
Similar calculations as in \eqref{eq:normK1} imply
\begin{align*}
	\Vert \Delta_q^h B_{\phi}(t) \Vert_{L^{\infty}_v(L^2_h)} \leq \int_{0}^{1} \Vert \Delta_q^h \pa_x A_{\phi}(t,.,y') \Vert_{L_{h}^2} dy' \lesssim  2^{q} \int_{0}^{1} \Vert \Delta_q^h  A_{\phi}(t,.,y') \Vert_{L_h^2} dy' \lesssim  2^{q} \Vert \Delta_q^h  A_{\phi}(t) \Vert_{L^2}
\end{align*}

Since $ 0<s\leq 1 $, we have
\begin{align*}
	 K_{3,q} &\leq 2^{\frac{q}{2}} \sum_{q'\geq q-3}\int_0^t   2^{q} \Vert e^{\Rcal t'}\Delta_{q'}^h  A_{\phi}(t') \Vert_{L^2}  \Vert \Delta_{q'}^h  \pa_y A_{\phi}  \Vert_{L^{2}} \Vert \Delta_{q}^h e^{\Rcal t'} C_{\phi} \Vert_{L^{2}}dt' \\
	&\leq  2^{\frac{q}{2}} \sum_{q'\geq q-3}\int_0^t 2^{\frac{q'}{2}} \Vert e^{\Rcal t'}\Delta_{q'}^h  A_{\phi}(t') \Vert_{L^2}  \Vert  \pa_y A_{\phi}  \Vert_{\mathcal{B}^\f12} \Vert \Delta_{q}^h e^{\Rcal t'} C_{\phi} \Vert_{L^{2}}dt' \\
&\lesssim 2^{\frac{q}{2}} \sum_{q'\geq q-3} 2^{\frac{q'}{2}} \left(\int_0^t \Vert \pa_y A_{\phi}(t') \Vert_{\mathcal{B}^{\f12}}  \Vert e^{\Rcal t'}\Delta_{q'}^h A_{\phi} \Vert_{L^2}^2 dt'\right)^{\f12} \times \left(\int_0^t \Vert  \pa_y A_{\phi}(t') \Vert_{\mathcal{B}^{\f12}} \Vert e^{\Rcal t'} \Delta_q^h C_{\phi} \Vert_{L^2}^2 dt'\right)^{\f12}.
\end{align*}

we note that $\dot{\theta}(t) \simeq \Vert  \pa_y A_{\phi}(t') \Vert_{\mathcal{B}^{\f12}}$, using the definition \eqref{def:CLweight}, we have 
\begin{align*}
	\left(\int_0^t \Vert \pa_y A_{\phi}(t') \Vert_{\mathcal{B}^{\f12}} e^{2\Rcal t'} \Vert \Delta_q^h C_{\phi} \Vert_{L^2}^2 dt'\right)^{\frac{1}{2}} &\lesssim \left(\int_0^t \dot{\theta}(t') e^{2\Rcal t'} \Vert \Delta_q^h C_{\phi} \Vert_{L^2}^2 dt'\right)^{\frac{1}{2}} \\ &\lesssim 2^{-q(s+\frac{1}{2})} d_q(C_\phi) \Vert e^{\Rcal t} C_{\phi} \Vert_{\tilde{L}^{2}_{t,\dot{\theta}}(\mathcal{B}^{s+\frac{1}{2}})}.
\end{align*}
Then,
\begin{align} \label{eq:K_3,q}
	K_{3,q} \lesssim 2^{-2qs} d_q^2 \Vert e^{\Rcal t} A_{\phi} \Vert_{\tilde{L}^{2}_{t,\dot{\theta}}(\mathcal{B}^{s+\frac{1}{2}})} \Vert e^{\Rcal t} C_{\phi} \Vert_{\tilde{L}^{2}_{t,\dot{\theta}}(\mathcal{B}^{s+\frac{1}{2}})},
\end{align}

where
\begin{align*}
	d_q^2 = d_q(C_\phi) \left( \sum_{q'\geq q-3}d_{q'}(A_\phi) \, 2^{(q-q')s} \right)
\end{align*}
is a summable sequence of positive constants. Summing with respect to $q \in \ZZ$, and using Fubini's theorem, we get
\begin{align} \label{eq:K3q}
	\sum_{q\in\ZZ} 2^{2qs} K_{3,q} \lesssim C\Vert e^{\Rcal t} B_{\phi} \Vert_{\tilde{L}^2_{t,\dot{\theta}}(\mathcal{B}^{s+\frac{1}{2}})} \Vert e^{\Rcal t} B_{\phi} \Vert_{\tilde{L}^2_{t,\dot{\theta}}(\mathcal{B}^{s+\frac{1}{2}})},
\end{align}
Lemma \ref{lem C+B+A} is then proved by summing Estimates \eqref{eq:K1q}, \eqref{eq:K2q} and \eqref{eq:K3q}. \hfill 
\end{proof}

\begin{lemma} \label{lem:cbb}
	For any $s\in ]0,1]$ and $t\leq T^{\ast}$, there exist $C \geq 1$ such that,
	\begin{align}
	\label{eq:bub}
	\sum_{q\in\ZZ} 2^{2qs} \int_0^t \abs{\psca{e^{\mathcal{R}t'} \Delta_q^h (b\p_x u)_{\phi}, e^{\mathcal{R}t'} \Delta_q^h b_{\phi}}_{L^2}} dt' \leq C \Vert e^{\mathcal{R}t} u_{\phi} \Vert_{\tilde{L}^2_{t,\dot{\theta}(t)}(\mathcal{B}^{s+\frac{1}{2}})} \Vert e^{\mathcal{R}t} b_{\phi} \Vert_{\tilde{L}^2_{t,\dot{\theta}(t)}(\mathcal{B}^{s+\frac{1}{2}})}.
	\end{align}
	and
	\begin{align}
	\label{eq:cub}
	\sum_{q\in\ZZ} 2^{2qs} \int_0^t \abs{\psca{ e^{\Rcal t'} \Delta_q^h (c\p_y u)_{\phi}, e^{\Rcal t'}\Delta_q^h b_{\phi}}_{L^2} dt'}  \leq C \norm{e^{\Rcal t} b_\phi}_{\tilde{L}^2_{t,\dot{\theta}(t)}(\mathcal{B}^{s+\frac{1}{2}})}^2.
	\end{align}
\end{lemma}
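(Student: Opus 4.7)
The plan is to treat the two inequalities separately, following the templates of Lemmas~\ref{lem C+B+A} and~\ref{lem:ApaxA}. The overall strategy, common to both, is to apply Bony's horizontal paraproduct decomposition, bound the three resulting pieces via Bernstein and the Poincar\'e-type embedding $\dot H^1_y\hookrightarrow L^\infty_y$ (valid on the strip because of the Dirichlet condition), and finally use Cauchy--Schwarz in time to absorb one $\partial_y$-factor into the analyticity weight $\dot\theta(t)=\|\partial_y(u_\phi,b_\phi)\|_{\mathcal{B}^{1/2}}$. The key observation making this work in the MHD setting is that $\dot\theta(t)$ dominates both $\|\partial_y u_\phi\|_{\mathcal{B}^{1/2}}$ and $\|\partial_y b_\phi\|_{\mathcal{B}^{1/2}}$.

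For \eqref{eq:bub}, I would simply invoke Lemma~\ref{lem C+B+A} with the substitution $A\leftarrow b$, $B\leftarrow u$, $C\leftarrow b$. The only step in the proof of that lemma where the identity of $A$ matters is the Poincar\'e bound $\|S^h_{q'-1}A_\phi\|_{L^\infty}\lesssim \|\partial_y A_\phi\|_{\mathcal{B}^{1/2}}$; with $A=b$ this produces the factor $\|\partial_y b_\phi\|_{\mathcal{B}^{1/2}}\leq\dot\theta(t)$, so the paraproduct and remainder estimates carry over verbatim and yield the desired bound $C\,\|e^{\mathcal{R}t}u_\phi\|_{\tilde L^2_{t,\dot\theta(t)}(\mathcal{B}^{s+1/2})}\|e^{\mathcal{R}t}b_\phi\|_{\tilde L^2_{t,\dot\theta(t)}(\mathcal{B}^{s+1/2})}$.

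For \eqref{eq:cub} I would adapt the proof of Lemma~\ref{lem:ApaxA}: the relation $c=-\int_0^y\partial_x b$ is precisely its hypothesis with $A=b$ and $B=c$. Split
\[
c\,\partial_y u \;=\; T^h_c\,\partial_y u + T^h_{\partial_y u}c + R^h(c,\partial_y u),
\]
and estimate each piece by translating the factors of $c$ into factors of $b$ through the primitive representation, yielding the building blocks
\[
\|\Delta_q^h c_\phi\|_{L^\infty}\lesssim 2^{3q/2}\|\Delta_q^h b_\phi\|_{L^2},\qquad \|\Delta_q^h c_\phi\|_{L^2_h(L^\infty_v)}\lesssim 2^q\|\Delta_q^h b_\phi\|_{L^2},
\]
exactly as in Lemma~\ref{lem:ApaxA}. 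The remaining factor $\partial_y u$ is bounded purely by its horizontal Besov norm, $\|\Delta_{q'}^h\partial_y u_\phi\|_{L^2}=d_{q'}\,2^{-q'/2}\|\partial_y u_\phi\|_{\mathcal{B}^{1/2}}$, and Cauchy--Schwarz in time splits $\|\partial_y u_\phi\|_{\mathcal{B}^{1/2}}$ between the two factors in the scalar product. Since $\|\partial_y u_\phi\|_{\mathcal{B}^{1/2}}\leq\dot\theta(t)$, this factor is absorbed entirely into the time weight and produces no independent $\|u_\phi\|$-norm on the right-hand side. Summation in $q$ with the standard convolution by the summable sequence $d_q^2=d_q(b_\phi)\bigl(\sum_{q'}d_{q'}(b_\phi)\,2^{(q-q')(s-1/2)}\bigr)$ then delivers the announced bound $C\,\|e^{\mathcal{R}t}b_\phi\|_{\tilde L^2_{t,\dot\theta(t)}(\mathcal{B}^{s+1/2})}^{2}$.

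The main obstacle is conceptually subtle: unlike in Lemma~\ref{lem:ApaxA}, where the vertical derivative fell on the same variable $A$ that was primitivated, here $\partial_y$ acts on $u$ while the primitive is of $b$, so one must verify that no $\|u_\phi\|$-type norm leaks into the right-hand side of \eqref{eq:cub}. This is precisely why the analyticity weight must be chosen as $\dot\theta(t)=\|\partial_y(u_\phi,b_\phi)\|_{\mathcal{B}^{1/2}}$, and not only $\|\partial_y b_\phi\|_{\mathcal{B}^{1/2}}$: all residual $u$-dependence is then hidden inside $\dot\theta$, and the Bernstein exponent arithmetic closes for $0<s\leq 1$ exactly as in the previous two lemmas.
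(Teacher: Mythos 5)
Your proposal is correct and follows essentially the same route as the paper: Bony's horizontal decomposition, the Poincar\'e/Bernstein bounds $\|\Delta_q^h c_\phi\|_{L^\infty}\lesssim 2^{3q/2}\|\Delta_q^h b_\phi\|_{L^2}$ and $\|\Delta_q^h c_\phi\|_{L^2_h(L^\infty_v)}\lesssim 2^q\|\Delta_q^h b_\phi\|_{L^2}$ coming from $c=-\int_0^y\partial_x b$, and absorption of $\|\partial_y b_\phi\|_{\mathcal{B}^{1/2}}$ (resp. $\|\partial_y u_\phi\|_{\mathcal{B}^{1/2}}$) into the weight $\dot\theta$ by Cauchy--Schwarz in time. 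The only cosmetic difference is that for \eqref{eq:bub} you invoke Lemma \ref{lem C+B+A} with $(A,B,C)=(b,u,b)$ while the paper rewrites the three paraproduct estimates explicitly; your justification that the substitution is legitimate because $\dot\theta$ dominates $\|\partial_y b_\phi\|_{\mathcal{B}^{1/2}}$ is exactly the point used there.
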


\begin{proof}
At first, we will prove Estimate \eqref{eq:bub} of Lemma \ref{lem:cbb}. We define the time-dependent function $L(t)$ 
$$ L(t) = \int_0^t \abs{\psca{e^{\mathcal{R}t'} \Delta_q^h (b\p_x u)_{\phi}, e^{\mathcal{R}t'} \Delta_q^h b_{\phi}}_{L^2}} dt'.$$

Bony's decomposition for the horizontal variable into the para-products $b\pa_x u$ implies
\begin{align}
	\label{eq:B123q} \int_0^t \abs{\psca{e^{\mathcal{R}t'} \Delta_q^h (b\p_x u)_{\phi}, e^{\mathcal{R}t'} \Delta_q^h b_{\phi}}_{L^2}} dt' \leq L_{1,q} + L_{2,q} + L_{3,q},
\end{align}
with
\begin{align*} 
	L_{1,q} &= \int_0^t \abs{\psca{ e^{\mathcal{R}t'} \Delta_q^h ( T^h_{b}\pa_x u)_{\phi}, e^{\mathcal{R}t'} \Delta_q^h b_{\phi}}_{L^2}} dt'\\
	L_{2,q} &= \int_0^t \abs{\psca{ e^{\mathcal{R}t'} \Delta_q^h ( T^h_{\pa_xu} b )_{\phi}, e^{\mathcal{R}t'} \Delta_q^h b_{\phi}}_{L^2}} dt'\\
	L_{3,q} &= \int_0^t \abs{\psca{ e^{\mathcal{R}t'} \Delta_q^h (R^h(b,\pa_xu))_{\phi}, e^{\mathcal{R}t'} \Delta_q^h b_{\phi}}_{L^2}} dt'.
\end{align*}
We start by getting the estimate of the first term $L_{1,q}$, Using the support properties given in [\cite{B1981}, Proposition 2.10] and the definition of $T^{h}_{b} \pa_x u$, we infer 
\begin{equation} \label{eq:normbub}
	L_{1,q} \leq \sum_{|q-q'| \leq 4} \int_0^t e^{2\Rcal t'} \Vert S^h_{q'-1} b_{\phi} (t') \Vert_{L^{\infty}}\Vert \Delta_{q'}^h \partial_x u_{\phi} (t') \Vert_{L^2} \Vert \Delta_q^h b_{\phi} (t') \Vert_{L^2} dt'.
\end{equation}

By the poincarré's inequality on the interval $\lbrace 0<y<1 \rbrace$, we have the inclusion $\dot{H}^1_y \hookrightarrow L^\infty_y$ and 
\begin{align} \label{eq:normb12}
	\Vert \Delta^h_q b_{\phi}(t') \Vert_{L^{\infty}} \lesssim 2^{\frac{q}{2}} \Vert \Delta_q^h b_{\phi}(t') \Vert_{L^2_h(L^{\infty}_v)} \lesssim 2^{\frac{q}{2}} \Vert \Delta_q^h \pa_y b_{\phi}(t') \Vert_{L^2} \lesssim d_q(b_\phi) \Vert  \pa_y b_{\phi}(t') \Vert_{\mathcal{B}^{\f12}},
\end{align}
where $\lbrace d_q(b_{\phi}) \rbrace$ is a square-summable sequence with $\sum d_q(u_\phi)^2 = 1$. Then,
\begin{align*}
	\Vert S^h_{q'-1} b_{\phi} (t') \Vert_{L^{\infty}} \lesssim  \Vert  \pa_y b_{\phi}(t') \Vert_{\mathcal{B}^{\f12}},
\end{align*}
then, we replace this result in our estimate \eqref{eq:normbub}, and combining with H\"older inequality, imply that 

\begin{align*}
	L_{1,q} &\lesssim \sum_{|q-q'| \leq 4}  \int_0^t \Vert \pa_y b_{\phi}(t') \Vert_{\mathcal{B}^{\f12}} e^{\Rcal t'} \Vert \Delta_{q'}^h \pa_x u_{\phi}(t') \Vert_{L^2} e^{\Rcal t'} \Vert \Delta_q^h b_{\phi}(t') \Vert_{L^2} dt'\\
	&\lesssim \sum_{|q-q'| \leq 4} 2^{q'} \int_0^t \Vert \pa_y b_{\phi}(t') \Vert_{\mathcal{B}^{\f12}}^\f12 e^{\Rcal t'} \Vert \Delta_{q'}^h u_{\phi}(t') \Vert_{L^2} \Vert \pa_y b_{\phi}(t') \Vert_{\mathcal{B}^{\f12}}^\f12e^{\Rcal t'} \Vert \Delta_q^h b_{\phi}(t') \Vert_{L^2} dt'\\
	&\lesssim \sum_{|q-q'| \leq 4} 2^{q'} \left(\int_0^t \Vert \pa_y b_{\phi}(t') \Vert_{\mathcal{B}^{\f12}} e^{2\Rcal t'} \Vert \Delta_{q'}^h u_{\phi} \Vert_{L^2}^2 dt'\right)^{\f12} \left(\int_0^t \Vert  \pa_y b_{\phi}(t') \Vert_{\mathcal{B}^{\f12}} e^{2\Rcal t'} \Vert \Delta_q^h b_{\phi} \Vert_{L^2}^2 dt'\right)^{\f12}.
\end{align*}
we note that $\dot{\theta}(t) \simeq \Vert  \pa_y b_{\phi}(t') \Vert_{\mathcal{B}^{\f12}}$, using the definition \eqref{def:CLweight}, we have 

\begin{align*}
	\left(\int_0^t \Vert \pa_y b_{\phi}(t') \Vert_{\mathcal{B}^{\f12}} e^{2\Rcal t'} \Vert \Delta_{q'}^h u_{\phi} \Vert_{L^2}^2 dt'\right)^{\frac{1}{2}} &\lesssim \left(\int_0^t \dot{\theta}(t') e^{2\Rcal t'} \Vert \Delta_{q'}^h u_{\phi} \Vert_{L^2}^2 dt'\right)^{\frac{1}{2}} \\ &\lesssim 2^{-q'(s+\frac{1}{2})} d_{q'}(u_\phi) \Vert e^{\Rcal t} u_{\phi} \Vert_{\tilde{L}^{2}_{t,\dot{\theta}}(\mathcal{B}^{s+\frac{1}{2}})}.
\end{align*}
Then,
\begin{align} \label{eq:L_1,q}
	L_{1,q} \lesssim 2^{-2qs} d_q^2 \Vert e^{\Rcal t} b_{\phi} \Vert_{\tilde{L}^{2}_{t,\dot{\theta}}(\mathcal{B}^{s+\frac{1}{2}})} \Vert e^{\Rcal t} u_{\phi} \Vert_{\tilde{L}^{2}_{t,\dot{\theta}}(\mathcal{B}^{s+\frac{1}{2}})},
\end{align}
where 
$$ d_q^2 = d_q(b_\phi) \left( \sum_{|q-q'| \leq 4} d_{q'}(u_\phi) 2^{(q-q')(s-\f12)} \right) $$
if we multiply \eqref{eq:L_1,q} by $2^{2qs}$ and summing with respect to $q\in \mathbb{Z}$, we get 
\begin{align} \label{eq:L1q}
	\sum_{q\in\ZZ} 2^{2qs} L_{1,q} \lesssim C\Vert e^{\Rcal t} b_{\phi} \Vert_{\tilde{L}^{2}_{t,\dot{\theta}}(\mathcal{B}^{s+\frac{1}{2}})} \Vert e^{\Rcal t} u_{\phi} \Vert_{\tilde{L}^{2}_{t,\dot{\theta}}(\mathcal{B}^{s+\frac{1}{2}})}.
\end{align}

\medskip

Using the support properties given in [\cite{B1981}, Proposition 2.10] and the definition of $T^h_{b}\pa_xu$, we can estimate $L_{2,q}$ in a similar way as we did for $L_{1,q}$. As in \eqref{eq:normb12}, we can write
\begin{equation*} 
	\Vert \Delta_{q'}^h b_{\phi} \Vert_{L^2_h(L^{\infty}_v)} \lesssim \Vert \Delta_{q'}^h \pa_y b_{\phi} \Vert_{L^2} \lesssim 2^{-\frac{q'}{2}} d_{q'}(b_\phi) \Vert  \pa_y b_{\phi} \Vert_{\mathcal{B}^{\f12}}.
\end{equation*}
Then,
\begin{align*}
	I_q = \abs{\psca{\Delta_q^h ( T^h_{\pa_x u}b)_{\phi}, \Delta_q^h b_{\phi}}_{L^2}} &\leq \sum_{|q-q'|\leq 4} \Vert S_{q'-1}^h \pa_x u_{\phi} \Vert_{L^{\infty}_h(L^2_v)} \Vert \Delta_{q'}^h b_{\phi} \Vert_{L^{2}_h(L^{\infty}_v)} \Vert \Delta_{q}^h b_{\phi} \Vert_{L^{2}}\\
	&\leq  \sum_{|q-q'|\leq 4} 2^{-\frac{q'}{2}} d_{q'}(b_\phi) \Vert S_{q'-1}^h \pa_x u_{\phi} \Vert_{L^{\infty}_h(L^2_v)} \Vert \pa_y b_{\phi} \Vert_{\mathcal{B}^{\frac{1}{2}}} \Vert \Delta_{q}^h b_{\phi} \Vert_{L^{2}}.
\end{align*}
Since $ 0<s\leq 1 $, we have 
\begin{align*}
	e^{2\Rcal t} I_q &\leq  \sum_{|q-q'|\leq 4} e^{2\Rcal t} 2^{\frac{-q'}{2}} d_{q'}(b_\phi)  \Vert S_{q'-1}^h \pa_x u_{\phi} \Vert_{L^{\infty}_h(L^2_v)} \Vert \pa_y b_{\phi} \Vert_{\mathcal{B}^{\frac{1}{2}}} \Vert \Delta_{q}^h b_{\phi} \Vert_{L^{2}}\\
	&\leq \sum_{|q-q'|\leq 4} e^{2\Rcal t} 2^{\frac{-q'}{2}} d_{q'}(b_\phi) \sum_{l \lesssim q'-2}2^{\frac{3l}{2}}  \Vert \Delta_l^h u_{\phi} \Vert_{L^{2}} \Vert \pa_y b_{\phi} \Vert_{\mathcal{B}^{\frac{1}{2}}} \Vert \Delta_{q}^h b_{\phi} \Vert_{L^{2}}\\
	&\leq \sum_{|q-q'|\leq 4} 2^{\frac{-q'}{2}} d_{q'}(b_\phi) \sum_{l \lesssim q'-2}2^{l(1-s)} d_l(u_\phi) \Vert e^{\Rcal t} u_{\phi} \Vert_{\mathcal{B}^{s+\frac{1}{2}}} \Vert \pa_y b_{\phi} \Vert_{\mathcal{B}^{\frac{1}{2}}}  \Vert \Delta_{q}^h e^{\Rcal t} b_{\phi} \Vert_{L^{2}}\\
	&\leq \sum_{|q-q'|\leq 4} 2^{\frac{-q'}{2}} d_{q'}(b_\phi) 2^{q'(1-s)} \Vert e^{\Rcal t} u_{\phi} \Vert_{\mathcal{B}^{s+\frac{1}{2}}} \Vert \pa_y b_{\phi} \Vert_{\mathcal{B}^{\frac{1}{2}}}  \Vert \Delta_{q}^h e^{\Rcal t}  b_{\phi} \Vert_{L^{2}} \\
	&\leq  \sum_{|q-q'|\leq 4} 2^{\frac{-q'}{2}} d_{q'}(b_\phi) 2^{q'(1-s)} 2^{q(s +\frac{1}{2})} 2^{-q(s+\frac{1}{2})} \Vert e^{\Rcal t} u_{\phi} \Vert_{\mathcal{B}^{s+\frac{1}{2}}} \Vert \pa_y b_{\phi} \Vert_{\mathcal{B}^{\frac{1}{2}}}  \Vert \Delta_{q}^h e^{\Rcal t} b_{\phi} \Vert_{L^{2}}\\
	&\leq d_q^2 2^{-2qs} \Vert e^{\Rcal t} u_{\phi} \Vert_{\mathcal{B}^{s+\frac{1}{2}}} \Vert \pa_y b_{\phi} \Vert_{\mathcal{B}^{\frac{1}{2}}}\Vert e^{\Rcal t} b_{\phi} \Vert_{\mathcal{B}^{s+\frac{1}{2}}}.
\end{align*}
where
\begin{align*}
	d_q^2 = d_q(b_\phi) \left(\sum_{|q-q'|\leq 4}d_{q'} 2^{(q -q')(s-\frac{1}{2})}\right)
\end{align*}
is a summable sequence of positive constants. Summing with respect to $q \in \ZZ$, integrating over $[0,t]$ and using Fubini's theorem, we get
\begin{align} \label{eq:L2q}
	\sum_{q\in\ZZ} 2^{2qs} L_{2,q} = \int_0^t \pare{\sum_{q\in\ZZ} 2^{2qs} e^{2\Rcal t'} I_q} dt' \lesssim \Vert e^{\Rcal t} b_{\phi} \Vert_{\tilde{L}^2_{t,\dot{\theta}}(\mathcal{B}^{s+\frac{1}{2}})}  \Vert e^{\Rcal t} u_{\phi} \Vert_{\tilde{L}^2_{t,\dot{\theta}}(\mathcal{B}^{s+\frac{1}{2}})},
\end{align}
where we recall that $\dot{\theta} (t) = \Vert \pa_y b_{\phi} \Vert_{\mathcal{B}^{\frac{1}{2}}}$.

To end this proof, it remains to estimate $L_{3,q}$. Using the support properties given in [\cite{B1981}, Proposition 2.10], the definition of $R^h(b,\pa_x u)$ and Bernstein lemma \ref{le:Bernstein}, we can write
\begin{align*}
	J_q = \abs{\psca{\Delta_q^h (R^h(b,\pa_x u))_{\phi}, \Delta_q^h b_{\phi}}_{L^2}} &\leq 2^{\frac{q}{2}} \sum_{q'\geq k-3} \Vert \DD^h_{q'} b_{\phi}\Vert_{L^{2}_h(L^{\infty}_v)} \Vert \Delta_{q'}^h \pa_x u_{\phi}  \Vert_{L^{2}} \Vert \Delta_{q}^h  b_{\phi} \Vert_{L^{2}} \\
	&\leq 2^{\frac{q}{2}} \sum_{q'\geq k-3}  2^{q'(1-\frac{1}{2})}  \Vert \pa_y b_{\phi}\Vert_{\mathcal{B}^{\frac{1}{2}}} \Vert \Delta_{q'}^h u_{\phi}  \Vert_{L^{2}} \Vert \Delta_{q}^h b_{\phi} \Vert_{L^{2}}  \\
	&\leq 2^{\frac{q}{2}} \sum_{q'\geq k-3}  2^{\frac{q'}{2}}  \Vert \pa_y b_{\phi}\Vert_{\mathcal{B}^{\frac{1}{2}}} \Vert \Delta_{q'}^h u_{\phi}  \Vert_{L^{2}} \Vert \Delta_{q}^h b_{\phi} \Vert_{L^{2}}.
\end{align*}
Since $ 0<s\leq 1 $, we have
\begin{align*}
	e^{2\Rcal t} J_q &\leq 2^{\frac{q}{2}} \sum_{q'\geq k-3}  2^{\frac{q'}{2}}  \Vert \pa_y b_{\phi}\Vert_{\mathcal{B}^{\frac{1}{2}}} \Vert \Delta_{q'}^h  e^{\Rcal t} u_{\phi}  \Vert_{L^{2}} \Vert \Delta_{q}^h e^{\Rcal t} b_{\phi} \Vert_{L^{2}} \\
	&\leq  2^{\frac{q}{2}} \sum_{q'\geq k-3} 2^{\frac{q'}{2}}  d_{q'}(u_\phi) 2^{-q'(s+\frac{1}{2})} \Vert e^{\Rcal t} u_{\phi} \Vert_{\mathcal{B}^{s+\frac{1}{2}}}  \Vert \pa_y b_{\phi} \Vert_{\mathcal{B}^{\frac{1}{2}}}   d_{q}(b_\phi) 2^{-q(s+\frac{1}{2})} \Vert e^{\Rcal t} b_{\phi} \Vert_{\mathcal{B}^{s+\frac{1}{2}}}  \\
	&\leq d_q(b_\phi) 2^{-2qs} \Vert e^{\Rcal t} u_{\phi} \Vert_{\mathcal{B}^{s+\frac{1}{2}}} \Vert \pa_y b_{\phi} \Vert_{\mathcal{B}^{\frac{1}{2}}}\Vert e^{\Rcal t} b_{\phi} \Vert_{\mathcal{B}^{s+\frac{1}{2}}} \left(  \sum_{q'\geq k-3} d_{q'}(u_\phi) 2^{(q-q')s} \right) \\
	&\leq d_q^2  2^{-2qs} \Vert e^{\Rcal t} u_{\phi} \Vert_{\mathcal{B}^{s+\frac{1}{2}}} \Vert \pa_y b_{\phi} \Vert_{\mathcal{B}^{\frac{1}{2}}}\Vert e^{\Rcal t} b_{\phi} \Vert_{\mathcal{B}^{s+\frac{1}{2}}},
\end{align*}
where
\begin{align*}
d_q^2 = d_q(b_\phi) \left(  \sum_{q'\geq k-3} d_{q'}(u_\phi) 2^{(q-q')s} \right)
\end{align*}
is a summable sequence of positive constants. Summing with respect to $q \in \ZZ$, integrating over $[0,t]$ and using Fubini's theorem, we finally obtain
\begin{align} \label{eq:L3q}
	\sum_{q\in\ZZ} 2^{2qs} L_{3,q} = \int_0^t \pare{\sum_{q\in\ZZ} 2^{2qs} e^{2\Rcal t'} J_q} dt' \lesssim \Vert e^{\Rcal t} u_{\phi} \Vert_{\tilde{L}^2_{t,\dot{\theta}}(\mathcal{B}^{s+\frac{1}{2}})}\Vert e^{\Rcal t} b_{\phi} \Vert_{\tilde{L}^2_{t,\dot{\theta}}(\mathcal{B}^{s+\frac{1}{2}})}.
\end{align}
 then by summing the estimates \eqref{eq:L1q}, \eqref{eq:L2q} and \eqref{eq:L3q}, we achieved the proof of the \eqref{eq:bub} . \hfill 
 
 We will now prove Estimate \eqref{eq:cub}. Using Bony's decomposition for the horizontal variable, we have
\begin{align} \label{eq:C123q}
	\int_0^t \abs{\psca{e^{\mathcal{R}t'} \Delta_q^h (c\p_y u)_{\phi}, e^{\mathcal{R}t'} \Delta_q^h b_{\phi}}_{L^2}} dt' \leq C_{1,q} + C_{2,q} + C_{3,q},
\end{align}
where
\begin{align*}
	C_{1,q} &= \int_0^t \abs{\psca{e^{\mathcal{R}t'} \Delta_q^h ( T^h_{c}\pa_y u)_{\phi}, e^{\mathcal{R}t'} \Delta_q^h b_{\phi}}_{L^2}} dt'\\
	C_{2,q} &= \int_0^t \abs{\psca{e^{\mathcal{R}t'} \Delta_q^h ( T^h_{\pa_yu} c )_{\phi}, e^{\mathcal{R}t'} \Delta_q^h b_{\phi}}_{L^2}} dt'\\ 
	C_{3,q} &= \int_0^t \abs{\psca{e^{\mathcal{R}t'} \Delta_q^h (R^h(c,\pa_yu))_{\phi}, e^{\mathcal{R}t'} \Delta_q^h b_{\phi}}_{L^2}} dt'.
\end{align*}

Identity \eqref{vv} and Bernstein lemma imply
\begin{align}
	\label{eq:cphi}
	\Vert \Delta_q^h c_{\phi}(t) \Vert_{L^{\infty}} \leq \int_{0}^{1} \Vert \Delta_q^h \pa_x b_{\phi}(t,.,y') \Vert_{L_{h}^{\infty}} dy' \lesssim 2^{\frac{3q}{2}} \int_{0}^{1} \Vert \Delta_q^h  b_{\phi}(t,.,y') \Vert_{L_h^2} dy' \lesssim 2^{\frac{3q}{2}} \Vert \Delta_q^h  b_{\phi}(t) \Vert_{L^2},
\end{align}

then we get for $s \leq 1$
\begin{align*}
	e^{2\mathcal{R}t} M_q &= \abs{\psca{ e^{\mathcal{R}t} \Delta_q^h ( T^h_{c}\pa_y u)_{\phi}, e^{\mathcal{R}t} \Delta_q^h b_{\phi}}_{L^2}}\\ & \lesssim \sum_{|q'-q|\leq 4} e^{2\mathcal{R}t} \Vert S_{q'-1}^h c_{\phi} \Vert_{L^{\infty}} \Vert \Delta_{q'}^h \pa_y u_{\phi} \Vert_{L^{2}} \Vert \Delta_q^h b_{\phi} \Vert_{L^{2}}\\ 
	& \lesssim \sum_{|q'-q|\leq 4} e^{2\mathcal{R}t} \Vert S_{q'-1}^h c_{\phi} \Vert_{L^{\infty}} d_{q'}(u_\phi) 2^{-\frac{q'}{2}} \Vert \pa_y u_{\phi} \Vert_{\mathcal{B}^\f12} \Vert \Delta_q^h b_{\phi} \Vert_{L^{2}} \\ 
	&\lesssim \sum_{|q'-q|\leq 4} d_{q'}(u_\phi) 2^{-\frac{q'}{2}} \sum_{l\leq q'-2} 2^{\frac{3l}{2}} \Vert \Delta_l^h e^{\mathcal{R}t} b_{\phi} \Vert_{L^2}\Vert \pa_y u_{\phi} \Vert_{\mathcal{B}^\f12} \Vert \Delta_q^h e^{\mathcal{R}t}b_{\phi} \Vert_{L^{2}} \\ 
	&\lesssim \sum_{|q'-q|\leq 4} d_{q'}(u_\phi) 2^{-\frac{q'}{2}} \sum_{l\leq q'-2} d_l 2^{\frac{3l}{2}} 2^{-l(s+\f12)}\Vert e^{\mathcal{R}t} b_{\phi} \Vert_{\mathcal{B}^{s+\f12}} \Vert \pa_y u_{\phi} \Vert_{\mathcal{B}^\f12} \Vert \Delta_q^h e^{\mathcal{R}t}b_{\phi} \Vert_{L^{2}} \\ 
	&\lesssim \sum_{|q'-q|\leq 4} d_{q'}(u_\phi) 2^{-\frac{q'}{2}} 2^{q'(1-s)}\Vert e^{\mathcal{R}t} b_{\phi} \Vert_{\mathcal{B}^{s+\f12}} \Vert \pa_y u_{\phi} \Vert_{\mathcal{B}^\f12} \Vert \Delta_q^h e^{\mathcal{R}t}b_{\phi} \Vert_{L^{2}} \\
	&\lesssim \sum_{|q'-q|\leq 4} d_{q'}(u_\phi) 2^{-\frac{q'}{2}} 2^{q'(1-s)}\Vert e^{\mathcal{R}t} b_{\phi} \Vert_{\mathcal{B}^{s+\f12}} \Vert \pa_y u_{\phi} \Vert_{\mathcal{B}^\f12} d_q(b_\phi)2^{-q(s+\f12)} \Vert e^{\mathcal{R}t}b_{\phi} \Vert_{\mathcal{B}^{s+\f12}}\\
	&\lesssim d_q^2 2^{-2qs}\Vert \pa_y u_{\phi} \Vert_{\mathcal{B}^\f12} \Vert e^{\mathcal{R}t}b_{\phi} \Vert_{\mathcal{B}^{s+\f12}}^2
\end{align*}
where
\begin{align*}
	d_q^2 = d_q(b_\phi) \left(\sum_{|q'-q|\leq 4} d_{q'}(u_\phi) 2^{(q-q')(s-1)} \right),
\end{align*}
is a summable sequence of positive constants. Taking the sum with respect to $q \in \ZZ$, integrating it over $[0,t]$ and using Fubini's theorem, we arrive to
\begin{align}
	\label{eq:C1q}
	\sum_{q\in\ZZ} 2^{2qs} C_{1,q} = \int_0^t \pare{\sum_{q\in\ZZ} 2^{2qs} e^{2\Rcal t'} M_q} dt' \lesssim C \Vert e^{\Rcal t} b_{\phi} \Vert_{\tilde{L}^2_{t,\dot{\theta}}(\mathcal{B}^{s+\frac{1}{2}})}^2.
\end{align}

\medskip

Now we move to get the estimate of the second terme, by using the support properties given in [\cite{B1981}, Proposition 2.10] and the definition of $T^h_{\pa_y u} c$, we can estimate $C_{2,q}$ in a similar way as we did for $C_{1,q}$. 

\begin{align*}
C_{2,q} (t) &\leq \int_0^t \abs{\psca{ e^{\Rcal t'} \Delta_q^h ( T^h_{\pa_yu} c )_{\phi}, e^{\Rcal t'} \Delta_q^h b_{\phi}}_{L^2}} dt' \\
&\leq \sum_{|q-q'|\leq 4} \int_0^t e^{2\Rcal t'}\Vert S^h_{q'-1} \pa_y u_\phi \Vert_{L^\infty_h(L^2_v)} \Vert \Delta_{q'}^h c_\phi \Vert_{L^2_h(L^\infty_v)} \Vert \Delta_q^h b_\phi \Vert_{L^2} dt'.
\end{align*}

As in \eqref{eq:cphi}, we can write
\begin{equation*} 
	\Vert \Delta_{q'}^h c_{\phi} \Vert_{L^2_h(L^{\infty}_v)} \lesssim  2^{q'} \int_0^y\Vert \Delta_{q'}^h b_{\phi}(t,x,y') \Vert_{L_h^2}dy' \lesssim 2^{q'} \Vert  \Delta_{q'}^h b_{\phi}(t) \Vert_{L^2}.
\end{equation*}

Since $ 0<s\leq 1 $, we have 
\begin{align*}
	 C_{2,q} &\leq  \sum_{|q-q'|\leq 4} \int_0^t e^{2\Rcal t'}\Vert S^h_{q'-1} \pa_y u_\phi \Vert_{L^\infty_h(L^2_v)} \Vert \Delta_{q'}^h c_\phi \Vert_{L^2_h(L^\infty_v)} \Vert \Delta_q^h b_\phi \Vert_{L^2} dt'\\
	&\leq  \sum_{|q-q'|\leq 4} \int_0^t 2^{q'} e^{\Rcal t'}\Vert S^h_{q'-1} \pa_y u_\phi \Vert_{L^\infty_h(L^2_v)} \Vert  \Delta_{q'}^h b_{\phi}(t) \Vert_{L^2}e^{\Rcal t'}\Vert \Delta_q^h b_\phi \Vert_{L^2} dt'\\
	&\leq  \sum_{|q-q'|\leq 4} \int_0^t 2^{q'} e^{\Rcal t'} \Vert \pa_y u_\phi \Vert_{\mathcal{B}^\f12}\Vert  \Delta_{q'}^h b_{\phi}(t) \Vert_{L^2}e^{\Rcal t'}\Vert \Delta_q^h b_\phi \Vert_{L^2} dt'\\
	&\leq \sum_{|q-q'|\leq 4} \int_0^t 2^{q'} \Vert \pa_y u_\phi \Vert_{\mathcal{B}^\f12}\Vert e^{\Rcal t'} \Delta_{q'}^h b_{\phi}(t) \Vert_{L^2}\Vert e^{\Rcal t'}\Delta_q^h b_\phi \Vert_{L^2} dt'\\
	&\lesssim \sum_{|q-q'| \leq 4} 2^{q'} \left(\int_0^t \Vert \pa_y u_{\phi}(t') \Vert_{\mathcal{B}^{\f12}}  \Vert e^{\Rcal t'}\Delta_{q'}^h b_{\phi} \Vert_{L^2}^2 dt'\right)^{\f12} \times \left(\int_0^t \Vert  \pa_y u_{\phi}(t') \Vert_{\mathcal{B}^{\f12}} \Vert e^{\Rcal t'} \Delta_q^h b_{\phi} \Vert_{L^2}^2 dt'\right)^{\f12}.
\end{align*}

we note that $\dot{\theta}(t) \simeq \Vert  \pa_y u_{\phi}(t') \Vert_{\mathcal{B}^{\f12}}$, using the definition \eqref{def:CLweight}, we have 
\begin{align*}
	\left(\int_0^t \Vert \pa_y u_{\phi}(t') \Vert_{\mathcal{B}^{\f12}} e^{2\Rcal t'} \Vert \Delta_q^h b_{\phi} \Vert_{L^2}^2 dt'\right)^{\frac{1}{2}} &\lesssim \left(\int_0^t \dot{\theta}(t') e^{2\Rcal t'} \Vert \Delta_q^h b_{\phi} \Vert_{L^2}^2 dt'\right)^{\frac{1}{2}} \\ &\lesssim 2^{-q(s+\frac{1}{2})} d_q(b_\phi) \Vert e^{\Rcal t} b_{\phi} \Vert_{\tilde{L}^{2}_{t,\dot{\theta}}(\mathcal{B}^{s+\frac{1}{2}})}.
\end{align*}
Then,
\begin{align} \label{eq:C_2,q}
	C_{2,q} \lesssim 2^{-2qs} d_q^2 \Vert e^{\Rcal t} b_{\phi} \Vert_{\tilde{L}^{2}_{t,\dot{\theta}}(\mathcal{B}^{s+\frac{1}{2}})}^2 ,
\end{align}

where
\begin{align*}
	d_q^2 = d_q(b_\phi) \left(\sum_{|q-q'|\leq 4}d_{q'} 2^{(q -q')(s-\frac{1}{2})}\right)
\end{align*}
is a summable sequence of positive constants. Summing with respect to $q \in \ZZ$, and using Fubini's theorem, we get
\begin{align} \label{eq:C2q}
	\sum_{q\in\ZZ} 2^{2qs} C_{2,q} \lesssim C\Vert e^{\Rcal t} b_{\phi} \Vert_{\tilde{L}^2_{t,\dot{\theta}}(\mathcal{B}^{s+\frac{1}{2}})}^2,
\end{align}
where we recall that $\dot{\theta} (t) \simeq \Vert \pa_y u_{\phi} \Vert_{\mathcal{B}^{\frac{1}{2}}}$.

To end this proof, it remains to estimate $C_{3,q}$(is the rest term). Using the support properties given in [\cite{B1981}, Proposition 2.10], the definition of $R^h(B,\pa_y A)$ and Bernstein lemma \ref{le:Bernstein}, we can write
\begin{align*}
	C_{3,q} &= \int_0^t \abs{\psca{ e^{\Rcal t'} \Delta_q^h (R^h(c,\pa_yu))_{\phi}, e^{\Rcal t'} \Delta_q^h b_{\phi}}_{L^2}} dt'\\ &\leq 2^{\frac{q}{2}} \sum_{q'\geq q-3}\int_0^t e^{2\Rcal t'} \Vert \DD^h_{q'} c_{\phi}\Vert_{L^{2}_h(L^{\infty}_v)} \Vert \Delta_{q'}^h \pa_y u_{\phi}  \Vert_{L^{2}} \Vert \Delta_{q}^h  b_{\phi} \Vert_{L^{2}}dt'.
\end{align*}
Similar calculations as in \eqref{eq:normK1} imply
\begin{align*}
	\Vert \Delta_q^h c_{\phi}(t) \Vert_{L^{\infty}_v(L^2_h)} \leq \int_{0}^{1} \Vert \Delta_q^h \pa_x b_{\phi}(t,.,y') \Vert_{L_{h}^2} dy' \lesssim  2^{q} \int_{0}^{1} \Vert \Delta_q^h  b_{\phi}(t,.,y') \Vert_{L_h^2} dy' \lesssim  2^{q} \Vert \Delta_q^h  b_{\phi}(t) \Vert_{L^2}
\end{align*}

Since $ 0<s\leq 1 $, we have
\begin{align*}
	 C_{3,q} &\leq 2^{\frac{q}{2}} \sum_{q'\geq q-3}\int_0^t   2^{q} \Vert e^{\Rcal t'}\Delta_{q'}^h  b_{\phi}(t') \Vert_{L^2}  \Vert \Delta_{q'}^h  \pa_y u_{\phi}  \Vert_{L^{2}} \Vert \Delta_{q}^h e^{\Rcal t'} b_{\phi} \Vert_{L^{2}}dt' \\
	&\leq  2^{\frac{q}{2}} \sum_{q'\geq q-3}\int_0^t 2^{\frac{q'}{2}} \Vert e^{\Rcal t'}\Delta_{q'}^h  b_{\phi}(t') \Vert_{L^2}  \Vert  \pa_y u_{\phi}  \Vert_{\mathcal{B}^\f12} \Vert \Delta_{q}^h e^{\Rcal t'} b_{\phi} \Vert_{L^{2}}dt' \\
&\lesssim 2^{\frac{q}{2}} \sum_{q'\geq q-3} 2^{\frac{q'}{2}} \left(\int_0^t \Vert \pa_y u_{\phi}(t') \Vert_{\mathcal{B}^{\f12}}  \Vert e^{\Rcal t'}\Delta_{q'}^h b_{\phi} \Vert_{L^2}^2 dt'\right)^{\f12} \times \left(\int_0^t \Vert  \pa_y u_{\phi}(t') \Vert_{\mathcal{B}^{\f12}} \Vert e^{\Rcal t'} \Delta_q^h b_{\phi} \Vert_{L^2}^2 dt'\right)^{\f12}.
\end{align*}

we note that $\dot{\theta}(t) \simeq \Vert  \pa_y u_{\phi}(t') \Vert_{\mathcal{B}^{\f12}}$, using the definition \eqref{def:CLweight}, we have 
\begin{align*}
	\left(\int_0^t \Vert \pa_y u_{\phi}(t') \Vert_{\mathcal{B}^{\f12}} e^{2\Rcal t'} \Vert \Delta_q^h b_{\phi} \Vert_{L^2}^2 dt'\right)^{\frac{1}{2}} &\lesssim \left(\int_0^t \dot{\theta}(t') e^{2\Rcal t'} \Vert \Delta_q^h b_{\phi} \Vert_{L^2}^2 dt'\right)^{\frac{1}{2}} \\ &\lesssim 2^{-q(s+\frac{1}{2})} d_q(b_\phi) \Vert e^{\Rcal t} b_{\phi} \Vert_{\tilde{L}^{2}_{t,\dot{\theta}}(\mathcal{B}^{s+\frac{1}{2}})}.
\end{align*}
Then,
\begin{align} \label{eq:C_3,q}
	C_{3,q} \lesssim 2^{-2qs} d_q^2 \Vert e^{\Rcal t} b_{\phi} \Vert_{\tilde{L}^{2}_{t,\dot{\theta}}(\mathcal{B}^{s+\frac{1}{2}})}^2,
\end{align}

where
\begin{align*}
	d_q^2 = d_q(b_\phi) \left( \sum_{q'\geq q-3}d_{q'}(b_\phi) \, 2^{(q-q')s} \right)
\end{align*}
is a summable sequence of positive constants. Summing with respect to $q \in \ZZ$, and using Fubini's theorem, we get
\begin{align} \label{eq:C3q}
	\sum_{q\in\ZZ} 2^{2qs} C_{3,q} \lesssim C \Vert e^{\Rcal t} b_{\phi} \Vert_{\tilde{L}^2_{t,\dot{\theta}}(\mathcal{B}^{s+\frac{1}{2}})}^2.
\end{align}

%
%
%\vspace{2cm}
%{\color{red} HERE
%	
%}
%\vspace{2cm}
%
%
%
%
%
%

The proof of Lemma \ref{lem:cbb} is then completed by summing Estimates \eqref{eq:C1q}, \eqref{eq:C2q} and \eqref{eq:C3q} . \hfill
\end{proof}

\section{Global well posedness of the 2D MHD system in a thin strip}

The goal of this section is to prove Theorem \ref{th:2} and to establish the global well-posedness of the system $(\ref{eq:hydroPE})$ with small analytic data. As in the section 3 for any locally bounded function $\varphi$ on $\mathbb{R}_+ \times \mathbb{R}$ and any $f\in L^2(\mathcal{S})$, we define the analyticity in the horizontal variable $x$ by means of the following auxiliary function
\begin{align}\label{analy}
	f_{\varphi}^{\eps}(t,x,y) = \mathcal{F}^{-1}_{\xi\rightarrow x}(\eps^{\varphi(t,\xi)}\widehat{f}^{\eps}(t,\xi,y)).
\end{align}
The width of the analyticity band $\varphi$ is defined by
\begin{equation*}
	\varphi(t,\xi) = (a- \lambda \tau(t))|\xi|,
\end{equation*}
where $\lambda > 0$ with be precised later and $\tau(t)$ will be chosen in such a way that $\varphi(t,\xi) > 0$, for any $(t,\xi) \in \RR_+ \times \RR$ and $\dot{\tau}(t) = \tau'(t) = -\lambda \dot{\varphi}(t) \geq 0$. In our paper, we will choose
\begin{eqnarray}\label{band}
	\dot{\tau}(t) = \Vert \pa_y u_{\varphi}^{\eps}(t) \Vert_{\mathcal{B}^{\frac{1}{2}}} + \eps \Vert \pa_y v_{\varphi}^{\eps}(t) \Vert_{\mathcal{B}^{\frac{1}{2}}} \text{ \ \ \ \ with \ \ \ \ } \tau (0) = 0. 
\end{eqnarray}

In what follows, for the sake of the simplicity, we will neglect the script $\eps$ and write $(u_{\Theta},v_{\Theta},T_{\Theta})$ instead of $(u_{\varphi}^\epsilon,v_{\varphi}^\epsilon,b_{\varphi}^\epsilon,c^\eps_\varphi)$. Direct calculations from \eqref{eq:hydroPE} and \eqref{analy} show that $(u_{\varphi},v_{\varphi},b_{\varphi},c_\varphi)$ satisfies the system:

\begin{equation}\label{S4:eq1}
\quad\left\{\begin{array}{l}
\displaystyle \partial_t u_{\varphi}+ \lambda \dot{\tau}(t) |D_x|u_\varphi + (u\pa_xu)_\varphi + (v\pa_yu)_\varphi - \eps^2 \pa_x^2 u_\varphi - \pa_y^2 u_\varphi + \pa_x p_\varphi = (b\pa_xb)_\varphi + (c\pa_y b)_{\varphi},\ \ \\
\displaystyle \eps^2\left(\partial_t v_{\varphi}+ \lambda \dot{\tau}(t) |D_x|v_\varphi + (u\pa_xv)_\varphi + (v\pa_yv)_\varphi - \eps^2 \pa_x^2 v_\varphi - \pa_y^2 v_\varphi \right) + \pa_y p_\varphi = \eps^2 \left( (b\pa_xc)_\varphi + (c\pa_y c)_{\varphi} \right) ,\\
\displaystyle \partial_t b_{\varphi}+ \lambda \dot{\tau}(t) |D_x|b_\varphi + (u\pa_xb)_\varphi + (v\pa_yb)_\varphi - \eps^2 \pa_x^2 b_\varphi - \pa_y^2 b_\varphi = (b\pa_xu)_\varphi + (c\pa_y u)_{\varphi}\\
\displaystyle \eps^2 \left( \partial_t c_{\varphi}+ \lambda \dot{\tau}(t) |D_x|c_\varphi + (u\pa_xc)_\varphi + (v\pa_yc)_\varphi - \eps^2 \pa_x^2 c_\varphi - \pa_y^2 c_\varphi \right) = \eps^2 \left( (b\pa_xv)_\varphi + (c\pa_y v)_{\varphi} \right)\\
\displaystyle \partial_x u_{\varphi}+\partial_yv_{\varphi}=0  \text{ \ and \ } \pa_x b_\varphi + \pa_y c_\varphi = 0,\\
\displaystyle \left(u_{\varphi}, v_{\varphi}, b_{\varphi}, c_\varphi \right)|_{y=0}=\left(u_{\varphi}, v_{\varphi}, b_{\varphi}, c_\varphi \right)|_{y=1} = 0,\\
\displaystyle \left(u_{\varphi}, v_{\varphi}, b_{\varphi}, c_\varphi \right)|_{t=0}=\left(u_0, v_0,b_0,c_0 \right).
\end{array}\right.
\end{equation}

Where $|D_x|$ denote the Fourier multiplier of the symbol $|\xi|$. In what follows, we recall that we use "C" to denote a generic positive constant which can change from line to line.

Applying the dyadic operator $\DD^h_q$ to the system \eqref{S4:eq1}, then taking the $L^2(\Scal)$ ($\Scal = \mathbb{R}\times ]0,1[$) scalar product of the first, second, third and the fourth  equations of the obtained system with $\Delta_q^h u_{\phi}$, $\Delta_q^h v_{\phi}$ $\Delta_q^h b_{\phi}$ and $\Delta_q^h c_{\phi}$ respectively, we get

\begin{multline*}
	  \psca{ \Delta_q^h \pa_t  (u_{\varphi},\eps v_\varphi), \Delta_q^h (u_{\varphi},\eps v_\varphi) }_{L^2} + \lambda \dot{\tau}(t) \psca{|D_x| \Delta_q^h (u_{\varphi},\eps v_\varphi),\Delta_q^h (u_{\varphi},\eps v_\varphi)}_{L^2} - \psca{\Delta_q^h \pa_y^2 (u_{\varphi},\eps v_\varphi),\Delta_q^h (u_{\varphi},\eps v_\varphi) }_{L^2} \\ - \eps^2 \psca{\Delta_q^h \pa_x^2 (u_{\varphi},\eps v_\varphi),\Delta_q^h (u_{\varphi},\eps v_\varphi) }_{L^2}+ \psca{\Delta_q^h \nabla p_\varphi,\Delta_q^h (u_{\varphi}, v_\varphi)}_{L^2} 
	= -\psca{\Delta_q^h (u\p_x u + v\p_y u)_{\varphi}, \Delta_q^h u_{\varphi})}_{L^2} \\ -\epsilon ^2 \psca{\Delta_q^h (u\p_x v + v\p_y v)_{\varphi}, \Delta_q^h v_{\varphi})}_{L^2}  + \psca{\Delta_q^h (b\p_x b + c\p_yb)_{\varphi},\Delta_q^h u_\varphi}_{L^2} + \eps^2 \psca{\Delta_q^h (b\p_x c + c\p_yc)_{\varphi},\Delta_q^h v_\varphi}_{L^2},  \\
\end{multline*}
and
\begin{multline*}
 \psca{ \Delta_q^h \pa_t  (b_{\varphi},\eps c_\varphi), \Delta_q^h (b_{\varphi},\eps c_\varphi) }_{L^2} + \lambda \dot{\tau}(t) \psca{|D_x| \Delta_q^h (b_{\varphi},\eps c_\varphi),\Delta_q^h (b_{\varphi},\eps c_\varphi)}_{L^2} - \psca{ \Delta_q^h \pa_y^2 (b_{\varphi},\eps c_\varphi),\Delta_q^h (b_{\varphi},\eps c_\varphi) }_{L^2}  \\ - \eps^2 \psca{ \Delta_q^h \pa_x^2 (b_{\varphi},\eps c_\varphi),\Delta_q^h (b_{\varphi},\eps c_\varphi) }_{L^2}
	= -\psca{\Delta_q^h (u\p_x b+v\p_y b)_{\varphi}, \Delta_q^h b_{\varphi}}_{L^2} - \eps^2 \psca{\Delta_q^h (u\p_x c+v\p_y c)_{\varphi}, \Delta_q^h c_{\varphi}}_{L^2} \\ + \psca{\Delta_q^h(b\p_x u + c\p_yu)_{\varphi}, \Delta_q^h b_\varphi}_{L^2} + \eps^2 \psca{\Delta_q^h(b\p_x v + c\p_yv)_{\varphi}, \Delta_q^h c_\varphi}_{L^2}. \hspace{1cm}
\end{multline*}

Thanks to the Dirichlet boundary condition and due to the free divergence of $U$ (its mean that $\divv U = \pa_xu +\pa_y v =0$ ), we get by using the integration by part that 
\begin{align*}
    \psca{\Delta_q^h \na p_\varphi, \Delta_q^h(u_{\varphi}, v_\varphi) }_{L^2} &= - \psca{\Delta_q^h p_\varphi, \Delta_q^h \divv (u_{\varphi}, v_\varphi) }_{L^2}  \\
    & = \psca{\Delta_q^h p_\varphi, \Delta_q^h (\pa_y v_\varphi+ \pa_x u_\varphi)}_{L^2} \\
    &= 0. \ \ \  (\text{ because } \pa_y v_\varphi+ \pa_x u_\varphi =0) 
\end{align*}
we recall that we have by integrating by part that 
\begin{align*}
    \psca{\Delta_q^h \pa_y^2 (u_{\varphi},\eps v_\varphi),\Delta_q^h (u_{\varphi},\eps v_\varphi) }_{L^2} & = - \psca{\Delta_q^h \pa_y (u_{\varphi},\eps v_\varphi),\Delta_q^h \pa_y (u_{\varphi},\eps v_\varphi) }_{L^2} = - \norm{\Delta_q^h \pa_y  (u_{\varphi},\eps v_\varphi)}_{L^2}^2 \\
    \eps^2 \psca{\Delta_q^h \pa_x^2 (u_{\varphi},\eps v_\varphi),\Delta_q^h (u_{\varphi},\eps v_\varphi) }_{L^2} & = - \eps^2 \psca{\Delta_q^h \pa_x (u_{\varphi},\eps v_\varphi),\Delta_q^h \pa_x (u_{\varphi},\eps v_\varphi) }_{L^2} = - \eps^2 \norm{\Delta_q^h \pa_x  (u_{\varphi},\eps v_\varphi)}_{L^2}^2 \\
     \psca{\Delta_q^h \pa_y^2 (b_{\varphi},\eps c_\varphi),\Delta_q^h (b_{\varphi},\eps c_\varphi) }_{L^2} & = - \psca{\Delta_q^h \pa_y (b_{\varphi},\eps c_\varphi),\Delta_q^h \pa_y (b_{\varphi},\eps c_\varphi) }_{L^2} = - \norm{\Delta_q^h \pa_y  (b_{\varphi},\eps c_\varphi)}_{L^2}^2 \\
    \eps^2 \psca{\Delta_q^h \pa_x^2 (b_{\varphi},\eps c_\varphi),\Delta_q^h (b_{\varphi},\eps c_\varphi) }_{L^2} & = - \eps^2 \psca{\Delta_q^h \pa_x (b_{\varphi},\eps c_\varphi),\Delta_q^h \pa_x (b_{\varphi},\eps c_\varphi) }_{L^2} = - \eps^2 \norm{\Delta_q^h \pa_x  (b_{\varphi},\eps c_\varphi)}_{L^2}^2,
\end{align*}

we replace in the obtained estimate we get  
\begin{multline} \label{S5eq5}
	\frac{1}{2} \frac{d}{dt} \Vert \Delta_q^h (u_{\varphi},\eps v_\varphi)(t) \Vert_{L^2}^2 + \lambda \dot{\tau}(t) \norm{|D_x|^{\frac12} \Delta_q^h (u_{\varphi},\eps v_\varphi)}_{L^2}^2 + \Vert \Delta_q^h \pa_y (u_{\varphi},\eps v_\varphi)(t) \Vert_{L^2}^2 \\
	\eps^2 \Vert \Delta_q^h \pa_x (u_{\varphi},\eps v_\varphi)(t) \Vert_{L^2}^2 
	= -\psca{\Delta_q^h (u\p_x u + v\p_y u)_{\varphi}, \Delta_q^h u_{\varphi})}_{L^2} -\epsilon ^2 \psca{\Delta_q^h (u\p_x v + v\p_y v)_{\varphi}, \Delta_q^h v_{\varphi})}_{L^2}\\  + \psca{\Delta_q^h (b\p_x b + c\p_yb)_{\varphi},\Delta_q^h u_\varphi}_{L^2} + \eps^2 \psca{\Delta_q^h (b\p_x c + c\p_yc)_{\varphi},\Delta_q^h v_\varphi}_{L^2},\\
\end{multline}
and
\begin{multline} \label{S5eq6}
	\frac{1}{2} \frac{d}{dt} \Vert \Delta_q^h (b_{\varphi},\eps c_\varphi)(t) \Vert_{L^2}^2 + \lambda \dot{\tau}(t) \norm{|D_x|^{\frac12} \Delta_q^h (b_{\varphi},\eps c_\varphi)}_{L^2}^2 + \Vert \Delta_q^h \pa_y (b_{\varphi},\eps c_\varphi)(t) \Vert_{L^2}^2 +\\
	\eps^2 \Vert \Delta_q^h \pa_x (b_{\varphi},\eps c_\varphi)(t) \Vert_{L^2}^2
	= -\psca{\Delta_q^h (u\p_x b+v\p_y b)_{\varphi}, \Delta_q^h b_{\varphi}}_{L^2} - \eps^2 \psca{\Delta_q^h (u\p_x c+v\p_y c)_{\varphi}, \Delta_q^h c_{\varphi}}_{L^2} \\ + \psca{\Delta_q^h(b\p_x u + c\p_yu)_{\varphi}, \Delta_q^h b_\varphi}_{L^2} + \eps^2 \psca{\Delta_q^h(b\p_x v + c\p_yv)_{\varphi}, \Delta_q^h c_\varphi}_{L^2}. \hspace{1cm}
\end{multline}
Multiplying \eqref{S4eq5} and \eqref{S4eq6} with $e^{2\Rcal t}$ and then integrating with respect to the time variable, we have the result estimates 
\begin{multline} \label{S5eq5bis}
	\norm{e^{\Rcal t} \Delta_q^h (u_{\varphi},\eps v_\varphi)(t)}_{L^\infty_t (L^2)}^2 + \lambda \int_0^t \dot{\tau}(t') \norm{e^{\Rcal t'} |D_x|^{\frac12} \Delta_q^h (u_{\varphi},\eps v_\varphi)}_{L^2}^2 dt' + \norm{e^{\Rcal t} \Delta_q^h \pa_y (u_{\varphi},\eps v_\varphi)(t)}_{L^2_t(L^2)}^2 \\
	+ \eps^2 \norm{e^{\Rcal t} \Delta_q^h \pa_x (u_{\varphi},\eps v_\varphi)(t)}_{L^2_t(L^2)}^2 \leq \norm{\Delta_q^h (u_{\varphi},\eps v_\varphi)(0)}_{L^2}^2 + F_1 + F_2 + F_3 + F_4, \hspace{2cm}
\end{multline}
and
\begin{multline} \label{S5eq6bis}
	\norm{e^{\Rcal t} \Delta_q^h (b_{\varphi},\eps c_\varphi)(t)}_{L^\infty_t (L^2)}^2 + \lambda \int_0^t \dot{\tau}(t') \norm{e^{\Rcal t} |D_x|^{\frac12} \Delta_q^h (b_{\varphi},\eps c_\varphi)}_{L^2}^2 dt' + \norm{e^{\Rcal t} \Delta_q^h \pa_y^ (b_{\varphi},\eps c_\varphi)(t)}_{L^2_t(L^2)}^2\\
	+ \eps^2 \norm{e^{\Rcal t} \Delta_q^h \pa_x (b_{\varphi},\eps c_\varphi)(t)}_{L^2_t(L^2)}^2 \leq \norm{\Delta_q^h (b_{\varphi},\eps c_\varphi)(0)}_{L^2}^2 + F_5 +F_6. \hspace{2.5cm}
\end{multline}

\begin{lemma} \label{lem:vvv}
	For any $s\in ]0,1]$ and $t\leq T^{\ast}$, and $\varphi$ be defined as in \eqref{analy}, with $$\dot{\tau}(t) =  \Vert \pa_y A_{\varphi}^{\eps}(t) \Vert_{\mathcal{B}^{\frac{1}{2}}} + \eps \Vert \pa_y B_{\varphi}^{\eps}(t) \Vert_{\mathcal{B}^{\frac{1}{2}}}.$$ Then, there exists $C \geq 1$ such that, for any $t > 0$, $\varphi(t,\xi) > 0$ and for any $A \in \tilde{L}^2_{t,\dot{\tau}(t)}(\mathcal{B}^{s+\frac{1}{2}})$ that satisfied $B(t,x,y) = - \int_0^t \pa_x A(t,x,s) ds$, we have
	\begin{align*}
		\eps^2 \sum_{q\in\ZZ} 2^{2qs} \int_0^t \abs{ \psca{e^{\mathcal{R}t'}\Delta_q^h (B\p_y B)_{\varphi}, e^{\mathcal{R}t'}\Delta_q^h B_{\varphi}}_{L^2}}dt' \leq C \Vert e^{\mathcal{R}t} (A_{\varphi},\eps B_{\varphi})\Vert_{\tilde{L}^2_{t,\dot{\tau}(t)}(\mathcal{B}^{s+\frac{1}{2}})}^2.
	\end{align*}
\end{lemma}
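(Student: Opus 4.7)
My plan is to follow the proof of Lemma~\ref{lem:ApaxA} almost verbatim, the only genuinely new ingredient being a careful bookkeeping of the prefactor $\eps^{2}$ so that the bound is dominated by $\Vert e^{\mathcal{R}t}\eps B_\varphi\Vert_{\tilde L^{2}_{t,\dot\tau(t)}(\mathcal{B}^{s+1/2})}$ rather than by a ``free'' $\Vert B_\varphi\Vert$. First I apply Bony's horizontal paraproduct decomposition
\begin{equation*}
B\pa_y B \;=\; T^h_{B}\pa_y B \;+\; T^h_{\pa_y B}B \;+\; R^h(B,\pa_y B),
\end{equation*}
splitting the left-hand side of the claim into three pieces $J_{1,q}+J_{2,q}+J_{3,q}$, each to be treated in the same way as the corresponding $K_{i,q}$ in the proof of Lemma~\ref{lem:ApaxA}.

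The crucial step is the allocation of the two factors of $\eps$ in each piece. One copy of $\eps$ is paired with a quantity $\Vert\pa_y B_\varphi\Vert_{\mathcal{B}^{1/2}}$ produced by a Poincar\'e-type bound, giving $\eps\Vert\pa_y B_\varphi\Vert_{\mathcal{B}^{1/2}}\leq\dot\tau(t)$ by the very definition~\eqref{band}; the other copy is absorbed into the ``test'' $\Delta_q^h B_\varphi$, turning it into $\Delta_q^h(\eps B_\varphi)$ and producing one factor of $\Vert e^{\mathcal{R}t}\eps B_\varphi\Vert_{\tilde L^{2}_{t,\dot\tau(t)}(\mathcal{B}^{s+1/2})}$ after the usual Cauchy--Schwarz in time against $\dot\tau$. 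The remaining ``middle'' copy of $B$ in each paraproduct is then traded for an $A$ through $B(t,x,y)=-\int_0^{y}\pa_x A(t,x,s)\,ds$: exactly as in \eqref{eq:normK1}, Bernstein and Minkowski yield $\Vert\Delta_{q'}^h B_\varphi\Vert_{L^{\infty}}\lesssim 2^{3q'/2}\Vert\Delta_{q'}^h A_\varphi\Vert_{L^{2}}$ and $\Vert\Delta_{q'}^h B_\varphi\Vert_{L^{2}_h(L^{\infty}_v)}\lesssim 2^{q'}\Vert\Delta_{q'}^h A_\varphi\Vert_{L^{2}}$, and the identity $\pa_y B=-\pa_x A$ does the same job wherever $\pa_y B$ explicitly appears. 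The resulting factor of $2^{q'}$ is exactly what is needed to absorb into the Besov norm $\Vert A_\varphi\Vert_{\mathcal{B}^{s+1/2}}$, yielding the second factor $\Vert e^{\mathcal{R}t}A_\varphi\Vert_{\tilde L^{2}_{t,\dot\tau(t)}(\mathcal{B}^{s+1/2})}$.

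With these two ideas in hand the rest is mechanical: in each $J_{i,q}$ I would reproduce the rearrangement of $K_{i,q}$ from Lemma~\ref{lem:ApaxA}, ending with $2^{-2qs}d_q^{2}\,\Vert e^{\mathcal{R}t}A_\varphi\Vert_{\tilde L^{2}_{t,\dot\tau(t)}(\mathcal{B}^{s+1/2})}\,\Vert e^{\mathcal{R}t}\eps B_\varphi\Vert_{\tilde L^{2}_{t,\dot\tau(t)}(\mathcal{B}^{s+1/2})}$ for a summable sequence $d_q^{2}$, then summing $\sum_q 2^{2qs}$ and using $2ab\leq a^{2}+b^{2}$ to dominate the product by $\Vert e^{\mathcal{R}t}(A_\varphi,\eps B_\varphi)\Vert_{\tilde L^{2}_{t,\dot\tau(t)}(\mathcal{B}^{s+1/2})}^{2}$. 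The step I expect to require the most care is that the $\eps$-splitting must be adapted piece by piece (in $J_{2,q}$ the derivative sits in the low-frequency factor, so the $\eps$ has to travel there); on the side one must also check that Poincar\'e in $y$ applies to $B$, but $B|_{y=0}=0$ is automatic from the definition and $B|_{y=1}=0$ follows from the compatibility condition on $A$ imposed throughout the paper.
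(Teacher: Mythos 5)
Your proposal is correct and is essentially the argument the paper intends: Lemma \ref{lem:vvv} is stated there without its own proof, and your plan reproduces the paper's proof of Lemma \ref{lem:ApaxA} with the right $\eps$-bookkeeping — one factor of $\eps$ kept with $\pa_y B$ so that $\eps\Vert \pa_y B_\varphi\Vert_{\mathcal{B}^{1/2}}\leq\dot{\tau}(t)$ supplies the time weight, one factor of $\eps$ on the tested copy of $B$, and the remaining copy of $B$ traded for $A$ through $B=-\int_0^y\pa_x A\,dy'$ and Bernstein exactly as in \eqref{eq:normK1}, followed by the same Cauchy--Schwarz in time and Young's inequality. One small caution: the parenthetical idea of replacing $\pa_y B$ by $-\pa_x A$ should not be applied to the factor that is meant to produce the $\dot{\tau}$-weight (that substitution costs a full horizontal derivative on $A$ which the right-hand side cannot absorb), but your stated allocation never requires it, so the proof goes through as planned.
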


Next, by using the lemmas \eqref{lem:ApaxA}, \eqref{lem C+B+A}, \eqref{lem:vvv} and \eqref{lem:cbb}

\begin{align*}
	\abs{F_1} &= \abs{\int_0^t \psca{e^{\mathcal{R}t}\Delta_q^h (u\p_x u)_{\varphi}, e^{\mathcal{R}t}\Delta_q^h u_{\varphi}}_{L^2} + \psca{e^{\mathcal{R}t}\Delta_q^h (v\p_y u)_{\varphi}, e^{\mathcal{R}t}\Delta_q^h u_{\varphi}}_{L^2}  dt' } \\
	&\leq C d_q^2 2^{-2qs} \Vert e^{\mathcal{R}t} u_{\varphi} \Vert_{\tilde{L}^2_{t,\dot{\tau}(t)}(\mathcal{B}^{s+\frac{1}{2}})}^2,
\end{align*}
\begin{align*}
	\abs{F_2} &= \eps^2 \abs{\int_0^t  \psca{e^{\mathcal{R}t} \Delta_q^h (u\p_x v)_{\varphi},e^{\mathcal{R}t} \Delta_q^h v_{\varphi}}_{L^2} + \psca{e^{\mathcal{R}t}\Delta_q^h (v\p_y v)_{\varphi}, e^{\mathcal{R}t}\Delta_q^h v_{\varphi}}_{L^2} dt'} \\ &\leq Cd_q^2 2^{-2qs} \Vert e^{\mathcal{R}t} (u_{\varphi},\eps v_{\varphi})\Vert_{\tilde{L}^2_{t,\dot{\tau}(t)}(\mathcal{B}^{s+\frac{1}{2}})}^2,
\end{align*}

\begin{align*}
	\abs{F_3} &= \abs{\int_0^t \psca{e^{\mathcal{R}t}\Delta_q^h (b\p_x b)_{\varphi}, e^{\mathcal{R}t}\Delta_q^h u_{\varphi}}_{L^2} + \psca{e^{\mathcal{R}t}\Delta_q^h (c\p_y b)_{\varphi}, e^{\mathcal{R}t}\Delta_q^h u_{\varphi}}_{L^2}  dt' } \\
	&\leq C d_q^2 2^{-2qs} \Vert e^{\mathcal{R}t} (u_{\varphi},b_\varphi) \Vert_{\tilde{L}^2_{t,\dot{\tau}(t)}(\mathcal{B}^{s+\frac{1}{2}})}^2,
\end{align*}
\begin{align*}
	\abs{F_4} &= \eps^2 \abs{\int_0^t  \psca{e^{\mathcal{R}t} \Delta_q^h (b\p_x c)_{\varphi},e^{\mathcal{R}t} \Delta_q^h v_{\varphi}}_{L^2} + \psca{e^{\mathcal{R}t}\Delta_q^h (c\p_y c)_{\varphi}, e^{\mathcal{R}t}\Delta_q^h v_{\varphi}}_{L^2} dt'} \\ &\leq Cd_q^2 2^{-2qs} \Vert e^{\mathcal{R}t} (b_{\varphi},\eps v_{\varphi})\Vert_{\tilde{L}^2_{t,\dot{\tau}(t)}(\mathcal{B}^{s+\frac{1}{2}})}^2
\end{align*}
and
\begin{align*}
	\abs{F_5} &= \abs{\int_0^t \psca{\Delta_q^h (u\p_x b+v\p_y b)_{\varphi}, \Delta_q^h b_{\varphi}}_{L^2} + \psca{\Delta_q^h(b\p_x u + c\p_yu)_{\varphi}, \Delta_q^h b_\varphi}_{L^2} dt'}\\ &\leq C d_q^2 2^{-2qs} \Vert e^{\Rcal t} (u_\varphi,b_{\varphi} \Vert_{\tilde{L}^2_{t,\dot{\tau}(t)}(\mathcal{B}^{s+\frac{1}{2}})}^2,
\end{align*}
\begin{align*}
	\abs{F_6} &= \eps^2 \abs{\int_0^t \psca{\Delta_q^h (u\p_x c+v\p_y c)_{\varphi}, \Delta_q^h c_{\varphi}}_{L^2} + \psca{\Delta_q^h(b\p_x v + c\p_yv)_{\varphi}, \Delta_q^h c_\varphi}_{L^2} dt'} \\ &\leq C d_q^2 2^{-2qs} \norm{e^{\Rcal t} (u_\varphi,\eps c_\varphi)}_{\tilde{L}^2_{t,\dot{\tau}(t)}(\mathcal{B}^{s+\f12})}^2.
	\end{align*}	
	
Multiplying \eqref{S5eq5bis} and \eqref{S5eq6bis} by $2^{2qs}$ and summing up with respect to $q\in \mathbb{Z}$, we get
	\begin{multline}\label{eq5:uvq}
	\norm{e^{\Rcal t} (u_{\varphi},\eps v_\varphi)(t)}_{L^\infty_t (\mathcal{B}^s)}^2 + \lambda \norm{e^{\Rcal t'} (u_{\varphi},\eps v_\varphi)}_{\tilde{L}^2_{t,\dot{\tau}(t)}(\mathcal{B}^{s+\f12})}^2  + \norm{e^{\Rcal t} \pa_y (u_{\varphi},\eps v_\varphi)(t)}_{L^2_t(\mathbf{B}^s)}^2 \\
	+ \eps^2 \norm{e^{\Rcal t} \pa_x (u_{\varphi},\eps v_\varphi)(t)}_{L^2_t(\mathcal{B}^s)}^2 \leq \norm{ (u_{\varphi},\eps v_\varphi)(0)}_{\mathcal{B}^s}^2  + C \Vert e^{\mathcal{R}t} (u_{\varphi},b_\varphi,\eps v_\varphi) \Vert_{\tilde{L}^2_{t,\dot{\tau}(t)}(\mathcal{B}^{s+\frac{1}{2}})}^2, \hspace{2cm}
	\end{multline}
	and
		\begin{multline}\label{eq5:bcq}
	\norm{e^{\Rcal t} (b_{\varphi},\eps c_\varphi)(t)}_{L^\infty_t (\mathcal{B}^s)}^2 + \lambda \norm{e^{\Rcal t'} (b_{\varphi},\eps c_\varphi)}_{\tilde{L}^2_{t,\dot{\tau}(t)}(\mathcal{B}^{s+\f12})}^2  + \norm{e^{\Rcal t} \pa_y (b_{\varphi},\eps c_\varphi)(t)}_{L^2_t(\mathbf{B}^s)}^2 \\
	+ \eps^2 \norm{e^{\Rcal t} \pa_x (b_{\varphi},\eps c_\varphi)(t)}_{L^2_t(\mathcal{B}^s)}^2 \leq \norm{ (b_{\varphi},\eps c_\varphi)(0)}_{\mathcal{B}^s}^2  + C \Vert e^{\mathcal{R}t} (u_{\varphi},b_\varphi,\eps c_\varphi) \Vert_{\tilde{L}^2_{t,\dot{\tau}(t)}(\mathcal{B}^{s+\frac{1}{2}})}^2. \hspace{2cm}
	\end{multline}
	
Thus, choosing our constant $C$ such that 
\begin{equation}
	\label{eq:C2} C \geq \max\set{4,\frac{1}{2\Rcal}},
\end{equation}
	 and then taking the sum of the two estimate \eqref{eq5:uvq} and \eqref{eq5:bcq}, we obtain
	 
	 \begin{multline*}
	\norm{e^{\Rcal t} \pare{u_{\varphi},b_{\varphi},\eps(v_\varphi,c_\varphi)}}_{\tilde{L}^\infty_t(\Bcal^s)}^2 + \lambda \norm{e^{\Rcal t} \pare{u_{\phi},b_{\phi},\eps(v_\varphi,c_\varphi)}}_{\tilde{L}^2_{t,\dot{\theta}(t)} (\Bcal^{s + \frac12})}^2 + \norm{e^{\Rcal t} \pa_y \pare{u_{\phi},b_{\phi},\eps(v_\varphi,c_\varphi)}}_{\tilde{L}^2_t(\Bcal^s)}^2 \\ +\eps^2 \norm{e^{\Rcal t} \pa_x \pare{u_{\phi},b_{\phi},\eps(v_\varphi,c_\varphi)}}_{\tilde{L}^2_t(\Bcal^s)}^2
	\leq 2C \norm{e^{a\abs{D_x}} (u_0,b_0,\eps v_0,\eps c_0)}_{\mathcal{B}^{s}}^2 + 2C^2 \norm{e^{\mathcal{R}t} \pare{u_{\phi},b_{\phi},\eps(v_\varphi,c_\varphi)}}_{\tilde{L}^2_{t,\dot{\theta}(t)}(\mathcal{B}^{s+\frac{1}{2}})}^2.
\end{multline*}

We set 
\begin{equation} \label{eq:Tstar3} 
	T^\star \stackrel{\tiny def}{=} \sup\set{t>0\ : \ \norm{u_{\varphi}}_{\mathcal{B}^{\frac{1}{2}}} \leq \frac{1}{2C^2} \ \mbox{ and } \ \tau(t) \leq  \frac{a}{\lambda}},
\end{equation}
and we choose the initial data such that 
\begin{align*}
	C\left( \Vert e^{a|D_x|} (u_0,\eps v_0) \Vert_{\mathcal{B}^{\frac12}} + \Vert e^{a|D_x|} (b_0,\eps c_0) \Vert_{\mathcal{B}^{\frac12}} \right) < \min\set{\frac{1}{2C^2},\frac{a}{2\lambda}}.
\end{align*}
The fact that $\tau(0) = 0$ implies already that $T^\star > 0$. If $\lambda = 2 C^2$, for any $0 < t < T^\star$, we have
\begin{multline}\label{eq:u+v+b+c2}
	\Vert e^{\mathcal{R}t} (u_{\varphi},\eps v_{\varphi}) \Vert_{\tilde{L}^{\infty}_t(\mathcal{B}^s)}^2+ \Vert e^{\mathcal{R}t} (b_{\varphi},\eps c_{\varphi}) \Vert_{\tilde{L}^{\infty}_t(\mathcal{B}^s)}^2 +  \norm{e^{\Rcal t} \pa_y (b_{\varphi},\eps c_{\varphi})}_{\tilde{L}^2_t(\Bcal^s)}^2 + \Vert e^{\mathcal{R}t} \pa_y (u_{\varphi},\eps v_{\varphi})\Vert_{\tilde{L}^{2}_t(\mathcal{B}^s)}^2 \\ + \eps^2 \Vert e^{\mathcal{R}t}  \pa_x (u_{\varphi},\eps v_{\varphi}) \Vert_{\tilde{L}^{2}_t(\mathcal{B}^s)}^2 + \eps^2 \Vert e^{\mathcal{R}t}  \pa_x (b_{\varphi},\eps c_{\varphi}) \Vert_{\tilde{L}^{2}_t(\mathcal{B}^s)}^2 \leq 2C \left( \norm{e^{a\abs{D_x}} (u_0,\eps v_0)}_{\mathcal{B}^{s}}^2 + \norm{e^{a\abs{D_x}} (u_0,\eps v_0)}_{\mathcal{B}^{s}}^2 \right).
\end{multline}
From \eqref{eq:u+v+b+c2} and \eqref{eq:C2}, we get that, for any $0 < t < T^\star$, 
\begin{align*}
	\norm{u_{\varphi}}_{\mathcal{B}^{\frac{1}{2}}} &\leq 	\Vert e^{\mathcal{R}t} (u_{\varphi},\eps v_{\varphi}) \Vert_{\tilde{L}^{\infty}_t(\mathcal{B}^s)} \leq  C\left( \Vert e^{a|D_x|} (u_0,\eps v_0) \Vert_{\mathcal{B}^{s}} +\Vert e^{a|D_x|} (b_0,\eps c_0) \Vert_{\mathcal{B}^{s}} \right) \\ &\leq  C\left( \Vert e^{a|D_x|} (u_0,\eps v_0) \Vert_{\mathcal{B}^{\frac12}} + \Vert e^{a|D_x|} (b_0,\eps c_0) \Vert_{\mathcal{B}^{s}} \right) < \frac{1}{2C^2}.
\end{align*}
Now, we recall that we already defined $\dot{\tau}(t) = \Vert \pa_y u_{\varphi}^{\eps}(t) \Vert_{\mathcal{B}^{\frac{1}{2}}} + \eps \Vert \pa_y v_{\varphi}^{\eps}(t) \Vert_{\mathcal{B}^{\frac{1}{2}}}$ with $\tau(0) = 0$. Then, for any $0 < t < T^\star$, Inequality \eqref{eq:u+v+b+c2} yields
\begin{align*}
	\tau (t) &= \int_0^t \left( \Vert \pa_y u_{\varphi}^{\eps}(t) \Vert_{\mathcal{B}^{\frac{1}{2}}} + \eps \Vert \pa_y v_{\varphi}^{\eps}(t) \Vert_{\mathcal{B}^{\frac{1}{2}}} \right) dt' \\
	& \leq \int_0^t  e^{-\mathcal{R}t'} \left(\Vert e^{\mathcal{R}t'} \pa_y u_{\varphi}^{\eps}(t) \Vert_{\mathcal{B}^{\frac{1}{2}}} + \eps \Vert e^{\mathcal{R}t'} \pa_y v_{\varphi}^{\eps}(t) \Vert_{\mathcal{B}^{\frac{1}{2}}}\right) dt' \\
	&\leq \left( \int_0^t  e^{-2\mathcal{R}t'} dt' \right)^{\f12} \left( \int_0^t (\Vert e^{\mathcal{R}t'}\pa_y u_{\varphi}^{\eps}(t) \Vert_{\mathcal{B}^{\frac{1}{2}}} + \eps \Vert e^{\mathcal{R}t'}\pa_y v_{\varphi}^{\eps}(t) \Vert_{\mathcal{B}^{\frac{1}{2}}})^2 dt' \right)^{\f12} \\
	&\leq C \Vert e^{\mathcal{R}t}(\eps\pa_y v_{\varphi}^{\eps},\pa_y u^{\eps}_{\varphi}) \Vert_{\tilde{L}^2_t(\mathcal{B}^{\f12})} \\
	&\leq C \left( \Vert e^{a|D_x|} (u_0,\eps v_0) \Vert_{\mathcal{B}^\f12} + \Vert e^{a|D_x|}  (b_{0},\eps c_0) \Vert_{\mathcal{B}^\f12} \right) < \frac{a}{2\lambda}.
\end{align*}
A continuity argument implies that $T^\star = +\infty$ and we have \eqref{eq:u+v+b+c2} is valid for any $t \in \RR_+$.

\section{The convergence to the system limit MHD}

In this section, we will justify the limit from the scaled an-isotropic MHD system in a 2D striped domain. As in the first section, our main idea is getting the control of the difference between the two solution $(U^\eps,B^\eps)$ and $(U,B)$ of the systems \eqref{eq:hydroPE} and \eqref{eq:hydrolimit}  (respectively), in analytic space with some small initial data. In this end, we introduce  

\begin{align} \label{eq:psiphiq}
	\left\{
	\begin{aligned}
		(\Psi^{1,\eps},\Psi^{2,\eps},q^\eps) &= (u^\eps - u,v^\eps - v,p^\eps-p ), \\
		(\Phi^{1,\eps},\Phi^{2,\eps}) &= (b^\eps -b,c^\eps - c) .
	\end{aligned}
		\right.
\end{align}

Then, systems \eqref{eq:hydroPE} and \eqref{eq:hydrolimit} imply that $(\Psi^{1,\eps},\Psi^{2,\eps},q_{\eps},\Phi^{1,\eps},\Phi^{2,\eps})$ verifies 
\begin{equation}\label{S1eq10}
\quad\left\{\begin{array}{l}
\displaystyle \partial_t \Psi^{1,\eps} - \eps^2\partial_x^2 \Psi^{1,\eps} -\partial_y^2 \Psi^{1,\eps} + \partial_x q^\eps= R^{1,\eps}\ \ \mbox{in} \ \mathcal{S}\times ]0,\infty[,\\
\displaystyle \eps^2\left(\partial_t \Psi^{2,\eps} - \eps^2\partial_x^2 \Psi^{2,\eps}-\partial_y^2 \Psi^{2,\eps} \right)+\partial_y q^\eps = R^{2,\eps},\\
\displaystyle \partial_t \Phi^{1,\eps} - \eps^2\partial_x^2 \Phi^{1,\eps} -\partial_y^2 \Phi^{1,\eps} = R^{3,\eps},\\
\displaystyle \partial_t \Phi^{2,\eps} - \eps^2\partial_x^2 \Phi^{2,\eps}-\partial_y^2 \Phi^{2,\eps} = R^{4,\eps},\\
\displaystyle \partial_x \Psi^{1,\eps}+\partial_y  \Psi^{2,\eps}=0 \text{ \ } \partial_x \Phi^{1,\eps}+\partial_y  \Phi^{2,\eps}=0\\
\displaystyle \left( \Psi^{1,\eps}, \Psi^{2,\eps}, \Phi^{1,\eps}, \Phi^{2,\eps} \right)|_{t=0}=\left(u_0^\eps-u_0, v_0^{\eps} - v_0,b_0^{\eps}- b_0, c^\eps_0 - c \right),\\
\displaystyle \left( \Psi^{1,\eps}, \Psi^{2,\eps}, \Phi^{1,\eps}, \Phi^{2,\eps} \right)|_{y=0}=\left( \Psi^{1,\eps}, \Psi^{2,\eps}, \Phi^{1,\eps}, \Phi^{2,\eps} \right)|_{y=1} = 0,
\end{array}\right.
\end{equation}
where $v_0$ is a function of $u_0$ and $c_0$ is a function of $b_0$, using \eqref{vv} and the remaining terms $R^{i,\eps}$, with $ i =1,2,3,4$, are determined by the rest  
\begin{align} \label{eq:Ri}
	\left\{
	\begin{aligned}
		R^{1,\eps} &= \eps^2 \pa_x^2 u -(u^{\eps} \pa_x u^{\eps} - u\pa_xu )- (v^\eps \pa_y u^\eps - v\pa_y u) + (b^\eps \pa_x b^\eps - b\pa_x b)+ (c^\eps\pa_y b^\eps - c\pa_y b), \\
		R^{2,\eps} &= - \eps^2\left(\pa_t v -\eps^2 \pa_x^2 v -\pa_y^2 v + u^{\eps} \pa_x v^{\eps} + v^\eps \pa_y v^\eps + b^\eps \pa_x c^\eps + c^\eps \pa_y c^\eps \right), \\
		R^{3,\eps} &= \eps^2 \pa_x^2 b -(u^{\eps} \pa_x b^{\eps} - u\pa_xb )- (v^\eps \pa_y b^\eps - v\pa_y b) + (b^\eps \pa_x u^\eps - b\pa_x u)+ (c^\eps\pa_y u^\eps - c\pa_y u), \\
		R^{4,\eps} &= -\eps^2 \left(\pa_t c -\eps^2 \pa_x^2 c - \pa_y^2c +u^{\eps} \pa_x c^{\eps} + v^\eps \pa_y c^\eps + b^\eps \pa_x u^\eps + c^\eps \pa_y v^\eps \right).
	\end{aligned}
	\right.
\end{align}

As $ (\Psi^{1,\eps},\Psi^{2,\eps},q_{\eps},\Phi^{1,\eps},\Phi^{2,\eps})$ satisfies the boundary condition and also the free divergence, therefore these two conditions allows us to write

\begin{align}\label{Psi}
    \Psi^{2,\eps}(t,x,y) &= \int_0^y \pa_y \Psi^{2,\eps}(t,x,s) ds = - \int_0^y \pa_x \Psi^{1,\eps}(t,x,s)ds 
\end{align}
\begin{align}\label{Phi}
    \Phi^{2,\eps}(t,x,y) = \int_0^y \pa_y \Phi^{2,\eps}(t,x,s) ds = - \int_0^y \pa_x \Phi^{1,\eps}(t,x,s)ds.
\end{align}

If we replace $y$ by $1$ in \eqref{Psi} and \eqref{Phi}, we deduce from the incompressibility condition $\partial_x \Psi^{1,\eps}+\partial_y  \Psi^{2,\eps}=0 \text{ \ } \partial_x \Phi^{1,\eps}+\partial_y  \Phi^{2,\eps}=0$ that

\begin{align*}
	\pa_x\int_0^1\Psi^{1,\eps}(t,x,y)\, dy = -\int_0^1 \dd_y \Psi^{2,\eps}(t,x,y)\, dy = \Psi^{2,\eps}(t,x,1) - \Psi^{2,\eps}(t,x,0) = 0 \\
	\pa_x\int_0^1\Phi^{1,\eps}(t,x,y)\, dy = -\int_0^1 \dd_y \Phi^{2,\eps}(t,x,y)\, dy = \Phi^{2,\eps}(t,x,1) - \Phi^{2,\eps}(t,x,0) = 0.
\end{align*}

Now for suitable function $f$, we define
\begin{eqnarray}\label{analytiqueff}
f_{\Theta}(t,x,y) = \mathcal{F}^{-1}_{\xi \rightarrow x}\left(e^{\Theta(t,\xi)} \widehat{f}(t,\xi,y)\right) \text{ \ \ \ where \ \ \ } \Theta(t,\xi) =\left(a-\mu \eta(t)\right)|\xi|,
\end{eqnarray}
where $\mu \geq \lambda$ will be determined later, and $\eta(t)$ is given by 
$$ \eta(t) = \int_0^t \left( \Vert (\pa_y u^{\eps}_{\varphi}, \eps \pa_x u^{\eps}_{\varphi})(t') \Vert_{\mathcal{B}^{\frac{1}{2}}} + \Vert \pa_y u_{\phi} (t') \Vert_{\mathcal{B}^{\frac{1}{2}}} \right) dt'. $$
We can observe that, if we take $c_0$ and $c_1$ small enough in Theorems \ref{th:hydrolimit} and \ref{th:primitive} then $\Theta(t) \geq 0$ and 
$$ 0 \leq \Theta(t,\xi) \leq min\left(\phi(t,\xi),\varphi(t,\xi)\right) .$$
In what follows, for simplicity, we drop the script $\eps$ and we will write $(\Psi^{1}_{\Theta},\Psi^{2}_\Theta,q_{\Theta},\Phi^{1}_\Theta,\Phi^{2}_\Theta)$ instead of $(\Psi^{1,\eps}_{\Theta},\Psi^{2,\eps}_\Theta,q_{\Theta}^\eps,\Phi^{1,\eps}_\Theta,\Phi^{2,\eps}_\Theta)$. Direct calculations show that $(\Psi^{1}_{\Theta},\Psi^{2}_\Theta,q_{\Theta},\Phi^{1}_\Theta,\Phi^{2}_\Theta)$ satisfies
\begin{equation}\label{22}
\quad\left\{\begin{array}{l}
\displaystyle \partial_t \Psi^{1}_{\Theta} + \mu |D_x| \dot{\eta}(t) \Psi^{1}_{\Theta}  - \eps^2\partial_x^2 \Psi^{1}_{\Theta}-\partial_y^2 \Psi^{1}_{\Theta} + \pa_x q_\Theta = R^{1}_\Theta \ \ \mbox{in} \ \mathcal{S}\times ]0,\infty[,\\
\displaystyle \eps^2\left(\partial_t \Psi^{2}_\Theta +\mu |D_x| \dot{\eta}(t) \Psi^{2}_\Theta - \eps^2\partial_x^2 \Psi^{2}_\Theta-\partial_y^2 \Psi^{2}_\Theta\right) + \pa_y q_\Theta  = R^{2}_\Theta,\\
\displaystyle \partial_t \Phi^{1}_{\Theta} + \mu |D_x| \dot{\eta}(t) \Phi^{1}_{\Theta}  - \eps^2\partial_x^2 \Phi^{1}_{\Theta}-\partial_y^2 \Phi^{1}_{\Theta} = R^{3}_\Theta \ \ \mbox{in} \ \mathcal{S}\times ]0,\infty[,\\
\displaystyle \eps^2\left(\partial_t \Phi^{2}_\Theta +\mu |D_x| \dot{\eta}(t) \Phi^{2}_\Theta - \eps^2\partial_x^2 \Phi^{2}_\Theta-\partial_y^2 \Phi^{2}_\Theta\right) = R^{4}_\Theta,\\
\displaystyle \partial_x \Psi^{1}_\Theta+\partial_y  \Psi^{2}_\Theta=0 \text{ \ } \partial_x \Phi^{1}_\Theta+\partial_y  \Phi^{2}_\Theta=0\\
\displaystyle \left( \Psi^{1}_\Theta, \Psi^{2}_\Theta, \Phi^{1}_\Theta, \Phi^{2}_\Theta \right)|_{t=0}=\left(u_0^\eps-u_0, v_0^{\eps} - v_0,b_0^{\eps}- b_0, c^\eps_0 - c \right),\\
\displaystyle \left( \Psi^{1}_\Theta, \Psi^{2}_\Theta, \Phi^{1}_\Theta, \Phi^{2}_\Theta \right)|_{y=0}=\left( \Psi^{1}_\Theta, \Psi^{2}_\Theta, \Phi^{1}_\Theta, \Phi^{2}_\Theta \right)|_{y=1} = 0.
\end{array}\right.
\end{equation}
As in the previous sections, we will use ``$C$'' to denote a generic positive constant which can change from line to line. So, thanks to the theorems \ref{th:hydrolimit} and \ref{th:primitive}, and the proposition \ref{prop} we deduce that 
\begin{equation} \label{5M}
	\Vert (u^\eps_\varphi,b^\eps_\varphi) \Vert_{\tilde{L}^\infty(\mathbb{R}^+;\mathcal{B}^\f12)} + \Vert (u_\phi,b_\phi) \Vert_{\tilde{L}^\infty(\mathbb{R}^+;\mathcal{B}^\f12 \cap \mathcal{B}^\f52)} +\Vert \pa_y (u_\phi,b_\phi) \Vert_{\tilde{L}^2(\mathbb{R}^+;\mathcal{B}^\f12 \cap \mathcal{B}^\f52)} + \Vert (\pa_t (u,b))_\phi \Vert_{\tilde{L}^2(\mathbb{R}^+;\mathcal{B}^\f32)} \leq M,
\end{equation}
where $u^\eps_\Theta$ and $u_\phi$ are respectively determined by \eqref{S4:eq1} and \eqref{S1eq8} and $M \geq 1$ is a constant independent to $\eps$. Now we return to get the proof of the last theorem, for that we start by applying the dyadic operator in the system \eqref{22} and then taking the $L^2(\mathcal{S})$ (such that $\Scal = \mathbb{R}\times ]0,1[$) scalar product of all the equations then we obtain that 

\begin{align*}
&\psca{ \Delta_q^h \pa_t  (\Psi_{\Theta}^1,\eps \Psi_\Theta^2), \Delta_q^h (\Psi_{\Theta}^1,\eps \Psi_\Theta^2) }_{L^2} + \mu \dot{\eta}(t) \psca{|D_x| \Delta_q^h (\Psi_{\Theta}^1,\eps \Psi_\Theta^2),\Delta_q^h (\Psi_{\Theta}^1,\eps \Psi_\Theta^2)}_{L^2} \\
	  &+ \psca{\Delta_q^h \nabla q_\Theta,\Delta_q^h (\Psi_{\Theta}^1,\eps \Psi_\Theta^2)}_{L^2} - \psca{\Delta_q^h \pa_y^2 (\Psi_{\Theta}^1,\eps \Psi_\Theta^2),\Delta_q^h (\Psi_{\Theta}^1,\eps \Psi_\Theta^2) }_{L^2} - \eps^2 \psca{\Delta_q^h \pa_x^2 (\Psi_{\Theta}^1,\eps \Psi_\Theta^2),\Delta_q^h (\Psi_{\Theta}^1,\eps \Psi_\Theta^2) }_{L^2} \\ &= \psca{ \Delta_q^h R_\Theta^1, \Delta_q^h \Psi_{\Theta}^1 }_{L^2} + \psca{ \Delta_q^h R_\Theta^2, \Delta_q^h \Psi_{\Theta}^2 }_{L^2},
\end{align*}
and
\begin{multline*}
 \psca{ \Delta_q^h \pa_t  (\Phi_{\Theta}^1,\eps \Phi_\Theta^2), \Delta_q^h (\Phi_{\Theta}^1,\eps \Phi_\Theta^2) }_{L^2} + \mu \dot{\eta}(t) \psca{|D_x| \Delta_q^h (\Phi_{\Theta}^1,\eps \Phi_\Theta^2),\Delta_q^h (\Phi_{\Theta}^1,\eps \Phi_\Theta^2)}_{L^2} \\- \psca{ \Delta_q^h \pa_y^2 (\Phi_{\Theta}^1,\eps \Phi_\Theta^2),\Delta_q^h (\Phi_{\Theta}^1,\eps \Phi_\Theta^2) }_{L^2}  - \eps^2 \psca{ \Delta_q^h \pa_x^2 (\Phi_{\Theta}^1,\eps \Phi_\Theta^2),\Delta_q^h (\Phi_{\Theta}^1,\eps \Phi_\Theta^2) }_{L^2}
	\\= \psca{ \Delta_q^h R_\Theta^3, \Delta_q^h \Phi_{\Theta}^1 }_{L^2} + \psca{ \Delta_q^h R_\Theta^4, \Delta_q^h \Phi_{\Theta}^2 }_{L^2}. \hspace{1cm}
\end{multline*}

Thanks to the Dirichlet boundary condition and due to the free divergence of $\Psi$ (its mean that $\divv \Psi = \pa_x \Psi^1 +\pa_y \Psi^2 =0$ ), we get by using the integration by part that 
\begin{align*}
    \psca{\Delta_q^h \na q_\Theta, \Delta_q^h(\Psi_{\Theta}^1,\eps \Psi_\Theta^2) }_{L^2} &= - \psca{\Delta_q^h q_\Theta, \Delta_q^h \divv (\Psi_{\Theta}^1,\eps \Psi_\Theta^2) }_{L^2}  \\
    & = \psca{\Delta_q^h q_\Theta, \Delta_q^h (\pa_y \Psi_\Theta^2+ \pa_x \Psi_\Theta^1)}_{L^2} \\
    &= 0. \ \ \  (\text{ because } \pa_y \Psi_\Theta^2+ \pa_x \Psi_\Theta^1 =0) 
\end{align*}

we recall that we have by integrating by part that 
\begin{align*}
    \psca{\Delta_q^h \pa_y^2 (\Psi_{\Theta}^1,\eps \Psi_\Theta^2),\Delta_q^h (\Psi_{\Theta}^1,\eps \Psi_\Theta^2) }_{L^2} & = - \norm{\Delta_q^h \pa_y  (\Psi_{\Theta}^1,\eps \Psi_\Theta^2)}_{L^2}^2 \\
    \eps^2 \psca{\Delta_q^h \pa_x^2 (\Psi_{\Theta}^1,\eps \Psi_\Theta^2),\Delta_q^h (\Psi_{\Theta}^1,\eps \Psi_\Theta^2) }_{L^2} & = - \eps^2 \norm{\Delta_q^h \pa_x  (\Psi_{\Theta}^1,\eps \Psi_\Theta^2)}_{L^2}^2 \\
     \psca{\Delta_q^h \pa_y^2 (\Phi_{\Theta}^1,\eps \Phi_\Theta^2),\Delta_q^h (\Phi_{\Theta}^1,\eps \Phi_\Theta^2) }_{L^2} & = - \norm{\Delta_q^h \pa_y  (\Phi_{\Theta}^1,\eps \Phi_\Theta^2)}_{L^2}^2 \\
    \eps^2 \psca{\Delta_q^h \pa_x^2 (\Phi_{\Theta}^1,\eps \Phi_\Theta^2),\Delta_q^h (\Phi_{\Theta}^1,\eps \Phi_\Theta^2) }_{L^2} &  = - \eps^2 \norm{\Delta_q^h \pa_x  (\Phi_{\Theta}^1,\eps \Phi_\Theta^2)}_{L^2}^2,
\end{align*}

we replace in the obtained estimate and then integrating with respect to the time variable, we have the result estimates 
\begin{multline} \label{S5eq5bis}
	\norm{e^{\Rcal t} \Delta_q^h (\Psi_{\Theta}^1,\eps \Psi_\Theta^2)(t)}_{L^\infty_t (L^2)}^2 + \mu \int_0^t \dot{\eta}(t') \norm{e^{\Rcal t'} |D_x|^{\frac12} \Delta_q^h (\Psi_{\Theta}^1,\eps \Psi_\Theta^2)}_{L^2}^2 dt' + \norm{e^{\Rcal t} \Delta_q^h \pa_y (\Psi_{\Theta}^1,\eps \Psi_\Theta^2)(t)}_{L^2_t(L^2)}^2 \\
	+ \eps^2 \norm{e^{\Rcal t} \Delta_q^h \pa_x (\Psi_{\Theta}^1,\eps \Psi_\Theta^2)(t)}_{L^2_t(L^2)}^2 \leq \norm{\Delta_q^h (\Psi_{\Theta}^1,\eps \Psi_\Theta^2)(0)}_{L^2}^2 + G_1 + G_2, \hspace{2cm}
\end{multline}
and
\begin{multline} \label{S5eq6bis}
	\norm{e^{\Rcal t} \Delta_q^h (\Phi_{\Theta}^1,\eps \Phi_\Theta^2)(t)}_{L^\infty_t (L^2)}^2 + \mu \int_0^t \dot{\eta}(t') \norm{e^{\Rcal t} |D_x|^{\frac12} \Delta_q^h (\Phi_{\Theta}^1,\eps \Phi_\Theta^2)}_{L^2}^2 dt' + \norm{e^{\Rcal t} \Delta_q^h \pa_y^ (\Phi_{\Theta}^1,\eps \Phi_\Theta^2)(t)}_{L^2_t(L^2)}^2\\
	+ \eps^2 \norm{e^{\Rcal t} \Delta_q^h \pa_x (\Phi_{\Theta}^1,\eps \Phi_\Theta^2)(t)}_{L^2_t(L^2)}^2 \leq \norm{\Delta_q^h (\Phi_{\Theta}^1,\eps \Phi_\Theta^2)(0)}_{L^2}^2 + G_3 +G_4, \hspace{2.5cm}
\end{multline}
where 
\begin{align*}
\norm{\Delta_q^h (\Psi_{\Theta}^1,\eps \Psi_\Theta^2)(0)}_{L^2}^2 & = \norm{\Delta_q^h e^{a|D_x|}(u_0^\eps -u_0,\eps(v_0^\eps - v_0))}_{L^2}^2 \\
\norm{\Delta_q^h (\Phi_{\Theta}^1,\eps \Phi_\Theta^2)(0)}_{L^2}^2 & = \norm{\Delta_q^h e^{a|D_x|}(b_0^\eps -b_0,\eps(c_0^\eps - c_0))}_{L^2}^2.
\end{align*}
Next, we claim that $G_i$, $i=1,..4$ satisfied

\begin{multline} \label{eq:G1}
 G_1^q =  \int_0^t \abs{\psca{\Delta_q^h R^1_{\Theta}, \Delta_q^h \Psi^1_{\Theta}}_{L^2}} dt' \lesssim  2^{-q}d_q^2 \eps \Vert \pa_y u_{\Theta} \Vert_{\tilde{L}^2_{t}(\mathcal{B}^{\frac{3}{2}})} \Vert \eps \Psi^1_{\Theta} \Vert_{\tilde{L}^2_{t}(\mathcal{B}^{\frac{3}{2}})} \\ + 2^{-q}d_q^2\left( \Vert u_{\Theta} \Vert_{\tilde{L}^\infty_{t}(\mathcal{B}^{\frac{3}{2}})}^{\f12} \Vert \pa_y \Psi^1_{\Theta} \Vert_{\tilde{L}^2_{t}\mathcal{B}^{\frac{1}{2}})} +\Vert b_{\Theta} \Vert_{\tilde{L}^\infty_{t}(\mathcal{B}^{\frac{3}{2}})}^{\f12} \Vert \pa_y \Phi^1_{\Theta} \Vert_{\tilde{L}^2_{t}\mathcal{B}^{\frac{1}{2}})} \right)  \Vert \Psi^1_{\Theta} \Vert_{\tilde{L}^2_{t,\dot{\eta}(t)}(\mathcal{B}^1)}\\ + 2^{-q}d_q^2\Vert \Psi^1_{\Theta} \Vert_{\tilde{L}^2_{t,\dot{\eta}(t)}(\mathcal{B}^1)}^2 + 2^{-q}d_q^2\Vert \Psi^1_{\Theta} \Vert_{\tilde{L}^2_{t,\dot{\eta}(t)}(\mathcal{B}^1)} \Vert \Phi^1_{\Theta} \Vert_{\tilde{L}^2_{t,\dot{\eta}(t)}(\mathcal{B}^1)} .
\end{multline}
\begin{align} \label{eq:G2}
 G_2^q &=  \int_0^t \abs{\psca{\Delta_q^h R^2_{\Theta}, \Delta_q^h \Psi^2_{\Theta}}_{L^2}} dt'\\
&\lesssim 2^{-q} d_q^2 \Big\lbrace \Vert (\Psi^1_\Theta,\eps \Psi^2_\Theta) \Vert_{\tilde{L}^2_{t,\dot{\eta}(t)}(\mathcal{B}^{1})}  + \eps^2 \Vert \Psi^2_\Theta \Vert_{\tilde{L}^2_{t,\dot{\eta}(t)}(\mathcal{B}^1)} \Big( \Vert \Psi^2_\Theta \Vert_{\tilde{L}^2_{t,\dot{\eta}(t)}(\mathcal{B}^1)} +\Vert \Phi^2_\Theta \Vert_{\tilde{L}^2_{t,\dot{\eta}(t)}(\mathcal{B}^1)} \notag \\ 
&+ \Vert \Phi^1_\Theta \Vert_{\tilde{L}^2_{t,\dot{\eta}(t)}(\mathcal{B}^1)} + \eps^2\Vert (\pa_y \Psi^2_\Theta,\eps \pa_x\Psi^2_\Theta)\Vert_{\tilde{L}^2_{t}(\mathcal{B}^{\f12})}\big( \Vert (\pa_t u)_\Theta \Vert_{\tilde{L}^2_{t}(\mathcal{B}^{\f32})} + \Vert \pa_y u_\Theta \Vert_{\tilde{L}^2_{t}(\mathcal{B}^{\f32})} +  \Vert \pa_y u_\Theta \Vert_{\tilde{L}^2_{t}(\mathcal{B}^{\f52})}\big)   \notag \\
&+\Vert u^\eps_\Theta \Vert_{\tilde{L}^\infty_{t}(\mathcal{B}^{\f12})}^{\f12} \Vert \pa_y u_\Theta \Vert_{\tilde{L}^2_{t}(\mathcal{B}^2)} + \Vert u_\Theta \Vert_{\tilde{L}^\infty_{t}(\mathcal{B}^{\f32})}^\f12(\Vert \pa_y \Psi^2_\Theta \Vert_{\tilde{L}^2_{t}(\mathcal{B}^{\f12})} + \Vert \pa_y u_\Theta \Vert_{\tilde{L}^2_{t}(\mathcal{B}^{\f32})})\notag\\ & +\Vert b_\Theta \Vert_{\tilde{L}^\infty_{t}(\mathcal{B}^{\f32})}^\f12(\Vert \pa_y \Phi^2_\Theta \Vert_{\tilde{L}^2_{t}(\mathcal{B}^{\f12})} + \Vert \pa_y b_\Theta \Vert_{\tilde{L}^2_{t}(\mathcal{B}^{\f32})})+ \Vert b^\eps_\Theta \Vert_{\tilde{L}^\infty_{t}(\mathcal{B}^{\f12})}^{\f12} \Vert \pa_y b_\Theta \Vert_{\tilde{L}^2_{t}(\mathcal{B}^2)}\Big)\Big\rbrace . \notag
\end{align}

\begin{multline} \label{eq:G3}
 G_3^q =  \int_0^t \abs{\psca{\Delta_q^h R^3_{\Theta}, \Delta_q^h \Phi^1_{\Theta}}_{L^2}} dt' \lesssim  C2^{-q}d_q^2 \eps \Vert \pa_y b_{\Theta} \Vert_{\tilde{L}^2_{t}(\mathcal{B}^{\frac{3}{2}})} \Vert \eps \pa_x\Phi^1_{\Theta} \Vert_{\tilde{L}^2_{t}(\mathcal{B}^{\frac{1}{2}})} \\ + C2^{-q}d_q^2\left( \Vert u_{\Theta} \Vert_{L^\infty_{t}(\mathcal{B}^{\frac{3}{2}})}^{\f12} \Vert \pa_y \Phi^1_{\Theta} \Vert_{\tilde{L}^2_{t}\mathcal{B}^{\frac{1}{2}})} +\Vert b_{\Theta} \Vert_{L^\infty_{t}(\mathcal{B}^{\frac{3}{2}})}^{\f12} \Vert \pa_y \Psi^1_{\Theta} \Vert_{\tilde{L}^2_{t}\mathcal{B}^{\frac{1}{2}})} \right)  \Vert \Phi^1_{\Theta} \Vert_{\tilde{L}^2_{t,\dot{\eta}(t)}(\mathcal{B}^1)}\\ + C2^{-q}d_q^2\Vert \Phi^1_{\Theta} \Vert_{\tilde{L}^2_{t,\dot{\eta}(t)}(\mathcal{B}^1)}^2 + C2^{-q}d_q^2\Vert \Psi^1_{\Theta} \Vert_{\tilde{L}^2_{t,\dot{\eta}(t)}(\mathcal{B}^1)} \Vert \Phi^1_{\Theta} \Vert_{\tilde{L}^2_{t,\dot{\eta}(t)}(\mathcal{B}^1)} .
\end{multline}
\begin{align} \label{eq:G4}
 G_4^q &=  \int_0^t \abs{\psca{\Delta_q^h R^4_{\Theta}, \Delta_q^h \Phi^2_{\Theta}}_{L^2}} dt'\\
&\lesssim 2^{-q} d_q^2 \Big\lbrace \Vert (\Psi^1_\Theta,\eps \Phi^2_\Theta) \Vert_{\tilde{L}^2_{t,\dot{\eta}(t)}(\mathcal{B}^{1})}^2  + \eps^2 \Vert \Phi^2_\Theta \Vert_{\tilde{L}^2_{t,\dot{\eta}(t)}(\mathcal{B}^1)} \Big( \Vert \Phi^2_\Theta \Vert_{\tilde{L}^2_{t,\dot{\eta}(t)}(\mathcal{B}^1)} +\Vert \Psi^2_\Theta \Vert_{\tilde{L}^2_{t,\dot{\eta}(t)}(\mathcal{B}^1)}\Big) \notag \\ 
& + \eps^2\Vert (\pa_y \Phi^2_\Theta,\eps \pa_x\Phi^2_\Theta)\Vert_{\tilde{L}^2_{t}(\mathcal{B}^{\f12})}\big( \Vert (\pa_t b)_\Theta \Vert_{\tilde{L}^2_{t}(\mathcal{B}^{\f32})} + \Vert \pa_y b_\Theta \Vert_{\tilde{L}^2_{t}(\mathcal{B}^{\f32})} +  \Vert \pa_y b_\Theta \Vert_{\tilde{L}^2_{t}(\mathcal{B}^{\f52})}\big)   \notag \\
&+\eps^2 \Vert \Phi^2_\Theta \Vert_{\tilde{L}^2_{t,\dot{\eta}(t)}(\mathcal{B}^1)} \Big(\Vert b^\eps_\Theta \Vert_{\tilde{L}^\infty_{t}(\mathcal{B}^{\f12})}^{\f12} \Vert \pa_y u_\Theta \Vert_{\tilde{L}^2_{t}(\mathcal{B}^2)} + \Vert u_\Theta \Vert_{\tilde{L}^\infty_{t}(\mathcal{B}^{\f32})}^\f12(\Vert \pa_y \Phi^2_\Theta \Vert_{\tilde{L}^2_{t}(\mathcal{B}^{\f12})} + \Vert \pa_y b_\Theta \Vert_{\tilde{L}^2_{t}(\mathcal{B}^{\f32})})\notag\\ & +\Vert b_\Theta \Vert_{\tilde{L}^\infty_{t}(\mathcal{B}^{\f32})}^\f12(\Vert \pa_y \Psi^2_\Theta \Vert_{\tilde{L}^2_{t}(\mathcal{B}^{\f12})} + \Vert \pa_y u_\Theta \Vert_{\tilde{L}^2_{t}(\mathcal{B}^{\f32})})+ \Vert u^\eps_\Theta \Vert_{\tilde{L}^\infty_{t}(\mathcal{B}^{\f12})}^{\f12} \Vert \pa_y b_\Theta \Vert_{\tilde{L}^2_{t}(\mathcal{B}^2)}\Big)\Big\rbrace . \notag
\end{align}
By virtue of \eqref{5M}, \eqref{eq:G1}, \eqref{eq:G2}, \eqref{eq:G3} and \eqref{eq:G4}, we infer 
\begin{align} 
\sum_{i=1}^2 G_i^q &= \sum_{i=1}^4 \int_0^t \abs{\psca{\Delta_q^h R^i_{\Theta}, \Delta_q^h \Psi^i_{\Theta}}_{L^2}} dt' \notag \\ &\lesssim  \bigr( M\eps \Vert (\eps \pa_x(\Psi^1_\Theta,\eps \Psi^2_\Theta),\eps \pa_y \Psi^2_\Theta \Vert_{\tilde{L}^2_t(\mathcal{B}^\f12)} \\
&+M^\f12 \Vert \pa_y (\Psi^1_\Theta,\eps \Psi^2_\Theta) \Vert_{\tilde{L}^2_t(\mathcal{B}^\f12)} \Vert (\Psi^1_\Theta,\eps \Psi^2_\Theta) \Vert_{\tilde{L}^2_{t,\dot{\eta}(t)}(\mathcal{B}^1)} \notag\\
&+ M^\f32 \eps \Vert \eps \Psi^2_\Theta \Vert_{\tilde{L}^2_{t,\dot{\eta}(t)}(\mathcal{B}^1)} + \Vert (\Psi^1_\Theta,\eps \Psi^2_\Theta) \Vert_{\tilde{L}^2_{t,\dot{\eta}(t)}(\mathcal{B}^1)}^2 \notag \\ 
& + \Vert (\eps\Psi^2_\Theta,\eps \Phi^2_\Theta) \Vert_{\tilde{L}^2_{t,\dot{\eta}(t)}(\mathcal{B}^1)}^2 + \Vert (\Psi^1_\Theta,\Phi^1_\Theta) \Vert_{\tilde{L}^2_{t,\dot{\eta}(t)}(\mathcal{B}^1)}^2 \notag \\
& + M^\f12 \Vert \pa_y \Phi_\Theta^1 \Vert_{\tilde{L}_t(\mathcal{B}^\f12)}\Vert\Psi^1_\Theta) \Vert_{\tilde{L}^2_{t,\dot{\eta}(t)}(\mathcal{B}^1)} + M^\f12 \eps^\f12 \Vert \pa_y \Phi_\Theta^2 \Vert_{\tilde{L}_t(\mathcal{B}^\f12)}\Vert \eps \Psi^2_\Theta) \Vert_{\tilde{L}^2_{t,\dot{\eta}(t)}(\mathcal{B}^1)} \bigr). \notag
\end{align}

Multiplying the above inequality \eqref{S5eq5bis} and \eqref{S5eq6bis}  by $2^{q}$, and summing the obtained inequalities with respect to $q \in \Z$, we come to 
\begin{align} 
\Vert (\Psi^1_\Theta,\eps &\Psi^2_\Theta) \Vert_{\tilde{L}^\infty_t(\mathcal{B}^\f12)}+ \norm{(\Phi_{\Theta}^1,\eps \Phi_\Theta^2)(t)}_{L^\infty_t (\mathcal{B}^\f12)} +\mu^\f12 \Vert (\Psi^1_\Theta,\eps \Psi^2_\Theta) \Vert_{\tilde{L}^2_{t,\dot{\eta}(t)}(\mathcal{B}^1)} +\mu^\f12 \Vert (\Phi^1_\Theta,\eps \Phi^2_\Theta) \Vert_{\tilde{L}^2_{t,\dot{\eta}(t)}(\mathcal{B}^1)} \notag \\ &+ \Vert \pa_y (\Psi^1_\Theta,\eps \Psi^2_\Theta) \Vert_{\tilde{L}^2_t(\mathcal{B}^\f12)} +\eps \Vert (\Psi^1_\Theta,\eps \Psi^2_\Theta) \Vert_{\tilde{L}^2_t(\mathcal{B}^\f32)}+ \Vert \pa_y (\Phi^1_\Theta,\eps \Phi^2_\Theta) \Vert_{\tilde{L}^2_t(\mathcal{B}^\f12)} +\eps \Vert (\Phi^1_\Theta,\eps \Phi^2_\Theta) \Vert_{\tilde{L}^2_t(\mathcal{B}^\f32)} \notag \\
&\leq C\Vert e^{a|D_x|}(u_0^\eps-u_0, \eps(v_0^\eps  -v_0)) \Vert_{\mathcal{B}^\f12} + C\Vert e^{a|D_x|}(b_0^\eps-b_0, \eps(c_0^\eps  -c_0)) \Vert_{\mathcal{B}^\f12} \notag \\ &\bigr( M\eps \Vert (\eps \pa_x(\Psi^1_\Theta,\eps \Psi^2_\Theta),\eps \pa_y \Psi^2_\Theta \Vert_{\tilde{L}^2_t(\mathcal{B}^\f12)} \\
&+M^\f12 \Vert \pa_y (\Psi^1_\Theta,\eps \Psi^2_\Theta) \Vert_{\tilde{L}^2_t(\mathcal{B}^\f12)} \Vert (\Psi^1_\Theta,\eps \Psi^2_\Theta) \Vert_{\tilde{L}^2_{t,\dot{\eta}(t)}(\mathcal{B}^1)} \notag\\
&+ M^\f32 \eps \Vert \eps \Psi^2_\Theta \Vert_{\tilde{L}^2_{t,\dot{\eta}(t)}(\mathcal{B}^1)} + \Vert (\Psi^1_\Theta,\eps \Psi^2_\Theta) \Vert_{\tilde{L}^2_{t,\dot{\eta}(t)}(\mathcal{B}^1)}^2 \notag \\ 
& + \Vert (\eps\Psi^2_\Theta,\eps \Phi^2_\Theta) \Vert_{\tilde{L}^2_{t,\dot{\eta}(t)}(\mathcal{B}^1)}^2 + \Vert (\Psi^1_\Theta,\Phi^1_\Theta) \Vert_{\tilde{L}^2_{t,\dot{\eta}(t)}(\mathcal{B}^1)}^2 \notag \\
& + M^\f12 \Vert \pa_y \Phi_\Theta^1 \Vert_{\tilde{L}_t(\mathcal{B}^\f12)}\Vert\Psi^1_\Theta) \Vert_{\tilde{L}^2_{t,\dot{\eta}(t)}(\mathcal{B}^1)} + M^\f12 \eps^\f12 \Vert \pa_y \Phi_\Theta^2 \Vert_{\tilde{L}_t(\mathcal{B}^\f12)}\Vert \eps \Psi^2_\Theta) \Vert_{\tilde{L}^2_{t,\dot{\eta}(t)}(\mathcal{B}^1)} \bigr). \notag
\end{align}
Young's inequality leads to 
\begin{align} 
\Vert (\Psi^1_\Theta,\eps &\Psi^2_\Theta) \Vert_{\tilde{L}^\infty_t(\mathcal{B}^\f12)}+ \norm{(\Phi_{\Theta}^1,\eps \Phi_\Theta^2)(t)}_{L^\infty_t (\mathcal{B}^\f12)} +\mu^\f12 \Vert (\Psi^1_\Theta,\eps \Psi^2_\Theta) \Vert_{\tilde{L}^2_{t,\dot{\eta}(t)}(\mathcal{B}^1)} +\mu^\f12 \Vert (\Phi^1_\Theta,\eps \Phi^2_\Theta) \Vert_{\tilde{L}^2_{t,\dot{\eta}(t)}(\mathcal{B}^1)} \notag \\ &+ \Vert \pa_y (\Psi^1_\Theta,\eps \Psi^2_\Theta) \Vert_{\tilde{L}^2_t(\mathcal{B}^\f12)} +\eps \Vert (\Psi^1_\Theta,\eps \Psi^2_\Theta) \Vert_{\tilde{L}^2_t(\mathcal{B}^\f32)}+ \Vert \pa_y (\Phi^1_\Theta,\eps \Phi^2_\Theta) \Vert_{\tilde{L}^2_t(\mathcal{B}^\f12)} +\eps \Vert (\Phi^1_\Theta,\eps \Phi^2_\Theta) \Vert_{\tilde{L}^2_t(\mathcal{B}^\f32)}  \\
&\leq C\Vert e^{a|D_x|}(u_0^\eps-u_0, \eps(v_0^\eps  -v_0)) \Vert_{\mathcal{B}^\f12} + C\Vert e^{a|D_x|}(b_0^\eps-b_0, \eps(c_0^\eps  -c_0)) \Vert_{\mathcal{B}^\f12} \notag \\
&+ CM\left( \eps + \Vert (\Psi^1_\Theta,\eps \Psi^2_\Theta) \Vert_{\tilde{L}^2_{t,\dot{\eta}(t)}(\mathcal{B}^1)}+ \Vert (\Phi^1_\Theta,\eps \Phi^2_\Theta) \Vert_{\tilde{L}^2_{t,\dot{\eta}(t)}(\mathcal{B}^1)} \right). \notag
\end{align}
Then by taking $\mu = CM$, we can complete the proof Theorem \ref{th:3}. \hfill $\square$.
\begin{lemma}
we use the same assertion used in the theorem \ref{th:3}, and the result of the remark \ref{remk} we obtain the convergence of $\Phi^{2,\eps} = c^\eps - c$ to $0$ when $\eps \to 0.$
\end{lemma}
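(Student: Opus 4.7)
The plan is to derive an evolution equation for $\Phi^{2,\eps} = c^\eps - c$ and then carry out an energy estimate in the analytic Besov framework parallel to the one used for $\Phi^{1,\eps}$ in Theorem~\ref{th:3}. Dividing the fourth equation of \eqref{eq:hydroPE} by $\eps$ and subtracting the equation for $c$ stated in Remark~\ref{remk}, I find that $\Phi^{2,\eps}$ solves
\begin{equation*}
\pa_t \Phi^{2,\eps} - \eps^2 \pa_x^2 \Phi^{2,\eps} - \pa_y^2 \Phi^{2,\eps} = R^{4,\eps}_\star,
\end{equation*}
with Dirichlet boundary conditions at $y=0,1$ and initial datum $c_0^\eps - c_0$, where
\begin{equation*}
R^{4,\eps}_\star = \eps^2\pa_x^2 c - (u^\eps\pa_x c^\eps - u\pa_x c) - (v^\eps\pa_y c^\eps - v\pa_y c) + (b^\eps\pa_x v^\eps - b\pa_x v) + (c^\eps\pa_y v^\eps - c\pa_y v).
\end{equation*}
I would then rewrite each nonlinear difference in terms of the unknowns $\Psi^{1,\eps}, \Psi^{2,\eps}, \Phi^{1,\eps}, \Phi^{2,\eps}$ via the standard splittings $u^\eps\pa_x c^\eps - u\pa_x c = u^\eps\pa_x\Phi^{2,\eps} + \Psi^{1,\eps}\pa_x c$, and analogously for the remaining three terms, thereby separating the contributions involving $\Phi^{2,\eps}$ itself from those built out of the already-controlled differences $(\Psi^{1,\eps}, \Psi^{2,\eps}, \Phi^{1,\eps})$.

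Next, I would apply the analytic weight $e^{\Theta(t,D_x)}$ from \eqref{analytiqueff}, then $\Delta_q^h$, take the $L^2(\mathcal{S})$ inner product with $\Delta_q^h \Phi^{2,\eps}_\Theta$, and integrate in time after multiplication by $e^{2\mathcal{R}t}$. This produces the parabolic smoothing $\Vert \pa_y \Delta_q^h \Phi^{2,\eps}_\Theta \Vert_{L^2_t(L^2)}^2 + \eps^2 \Vert \pa_x \Delta_q^h \Phi^{2,\eps}_\Theta \Vert_{L^2_t(L^2)}^2$ together with the analytic good term $\mu \int_0^t \dot\eta(t') \Vert |D_x|^{1/2} \Delta_q^h \Phi^{2,\eps}_\Theta \Vert_{L^2}^2 \, dt'$. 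After multiplication by $2^q$ and summation over $q\in\ZZ$, each piece of $R^{4,\eps}_\star$ would be estimated by the same Bony-decomposition technique developed in Lemmas \ref{lem C+B+A}, \ref{lem:ApaxA}, \ref{lem:vvv} and \ref{lem:cbb}, leading to bounds structurally identical to \eqref{eq:G1}--\eqref{eq:G4} in the proof of Theorem~\ref{th:3}, but now with $\Phi^{2,\eps}_\Theta$ in the role of $\Psi^{2,\eps}_\Theta$.

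The main obstacle is the term $c^\eps\pa_y v^\eps - c\pa_y v = c^\eps\pa_y\Psi^{2,\eps} + \Phi^{2,\eps}\pa_y v$. For the first piece, incompressibility gives $\pa_y\Psi^{2,\eps} = -\pa_x\Psi^{1,\eps}$, and to preserve the $O(\eps)$ scaling one must pair this factor with the $\eps$-dissipation $\eps \Vert \Phi^{2,\eps}_\Theta \Vert_{\tilde L^2_t(\mathcal{B}^{3/2})}$, exploiting the bound $\eps \Vert \pa_x \Psi^{1,\eps}_\Theta \Vert_{\tilde L^2_t(\mathcal{B}^{1/2})} = O(\eps)$ furnished by Theorem~\ref{th:3}. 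For the second piece, writing $\pa_y v = -\pa_x u$ and invoking \eqref{eq:hydrolimitdtU} from Proposition~\ref{prop}, the contribution $\Phi^{2,\eps}\pa_y v$ is linear in the unknown and can be absorbed by enlarging $\mu$ in the spirit of \eqref{eq:lambda2}. Once these delicate terms are handled, the linear source $\eps^2\pa_x^2 c$ contributes an $O(\eps)$ term through the regularity of $c$ deduced from Proposition~\ref{prop}, and the remaining pieces of $R^{4,\eps}_\star$ are of order $\eps$ by Theorem~\ref{th:3}. A continuity argument analogous to \eqref{eq:Tstar3} then closes the estimate, yielding
\begin{equation*}
\Vert \Phi^{2,\eps}_\Theta \Vert_{\tilde L^\infty_t(\mathcal{B}^{1/2})} + \Vert \pa_y \Phi^{2,\eps}_\Theta \Vert_{\tilde L^2_t(\mathcal{B}^{1/2})} \le C\bigl( \Vert e^{a|D_x|}(c_0^\eps - c_0) \Vert_{\mathcal{B}^{1/2}} + M\eps \bigr),
\end{equation*}
which implies $\Phi^{2,\eps} \to 0$ as $\eps\to 0$.
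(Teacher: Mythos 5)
Your derivation of the equation for $\Phi^{2,\eps}$ is exactly the route the paper indicates (the lemma is only asserted there, with the hint that one should combine Remark \ref{remk} with the argument of Theorem \ref{th:3}): dividing the fourth equation of \eqref{eq:hydroPE} by $\eps$ and subtracting the $c$-equation of Remark \ref{remk} gives precisely your $R^{4,\eps}_\star$, with the correct Dirichlet and initial conditions, and the quasilinear terms $u^\eps\pa_x\Phi^{2,\eps}$, $v^\eps\pa_y\Phi^{2,\eps}$ can indeed be treated by the para-product lemmas and absorbed by the $\mu\,\dot\eta(t)|D_x|$ smoothing.

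The gap lies in the source terms that survive once the $\eps$-weight has been removed from the $c^\eps$-equation. Theorem \ref{th:3} controls only the weighted quantities $\eps\Psi^{2,\eps}_\Theta$, $\eps\pa_x\Psi^{1,\eps}_\Theta$, $\eps\pa_x(\eps\Psi^{2,\eps}_\Theta)$, and gives no $o(1)$ bound on $\pa_x\Psi^{1,\eps}_\Theta$, $\Psi^{2,\eps}_\Theta$ or $\pa_x\Psi^{2,\eps}_\Theta$ themselves. Consequently, (i) for the term you single out, $c^\eps\pa_y\Psi^{2,\eps}=-c^\eps\pa_x\Psi^{1,\eps}$, the bound $\eps\Vert\pa_x\Psi^{1,\eps}_\Theta\Vert_{\tilde L^2_t(\mathcal B^{1/2})}=O(\eps)$ only yields $\Vert\pa_x\Psi^{1,\eps}_\Theta\Vert_{\tilde L^2_t(\mathcal B^{1/2})}=O(1)$, so after Young's inequality this term contributes an $O(1)$ constant and the closed estimate gives $\Vert\Phi^{2,\eps}_\Theta\Vert=O(1)$, not the claimed $O(\eps+\mathrm{data})$; and (ii) you never address $b^\eps\pa_x v^\eps-b\pa_x v=b^\eps\pa_x\Psi^{2,\eps}+\Phi^{1,\eps}\pa_x v$, whose first piece loses two horizontal derivatives of $\Psi^{1,\eps}$ and is controlled by Theorem \ref{th:3} only in the form $\eps^2\pa_x\Psi^{2,\eps}$; integrating by parts in $x$ trades it against the $\eps\pa_x$-dissipation of $\Phi^{2,\eps}$ at the cost of a factor $\eps^{-2}$, so this contribution does not close at all. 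In effect, an unweighted $O(\eps)$ estimate for $c^\eps-c$ in $\tilde L^\infty_t(\mathcal B^{1/2})$ would require an unweighted estimate on $\Psi^{2,\eps}=v^\eps-v$, which neither the paper nor your sketch provides. What does follow directly from Theorem \ref{th:3} is a weaker statement: by \eqref{Phi}, $\Phi^{2,\eps}=-\int_0^y\pa_x\Phi^{1,\eps}\,ds$, and interpolating the small bound on $\Vert\Phi^{1,\eps}_\Theta\Vert_{\tilde L^\infty_t(\mathcal B^{1/2})}$ with the $O\bigl((\mathrm{data}+M\eps)/\eps\bigr)$ bound on $\Vert\Phi^{1,\eps}_\Theta\Vert_{\tilde L^2_t(\mathcal B^{3/2})}$ (or using the $\dot\eta$-weighted $\mathcal B^1$ bound) gives $\Phi^{2,\eps}\to0$ in lower-order norms, e.g.\ at rate $O(\sqrt{\eps})$ when the initial discrepancies are $O(\eps)$. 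Either prove the lemma in such a weaker topology along these lines, or supply the missing unweighted estimates; as written, your energy scheme does not yield the final displayed inequality.
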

\begin{proof} of the estimate \eqref{eq:G1}. According to \eqref{eq:Ri}, we write 
\begin{align*}
R^1_\Theta &= \left(\eps^2 \pa_x^2 u -(u^{\eps} \pa_x u^{\eps} - u\pa_xu )- (v^\eps \pa_y u^\eps - v\pa_y u) + (b^\eps \pa_x b^\eps - b\pa_x b)+ (c^\eps\pa_y b^\eps - c\pa_y b)\right)_\Theta \\
& = \eps^2 \pa_x^2 u_\Theta -\left(\big( u^{\eps} \pa_x (u^{\eps}-u) + (u^\eps -u)\pa_x u\big) - \big(v^\eps\pa_y(u^\eps - u) + (v^\eps - v)\pa_y u \big)\right)_\Theta \\
&+ \left(\big( b^{\eps} \pa_x (b^{\eps}-b) + (b^\eps -b)\pa_x b\big) - \big(c^\eps\pa_y(b^\eps - b) + (c^\eps - c)\pa_y b \big)\right)_\Theta \\
R^1_\Theta & = Q^1_\Theta + \left(\big( b^{\eps} \pa_x \Phi^1 + \Phi^1\pa_x b\big) - \big(c^\eps\pa_y \Phi^1 + \Phi^2\pa_y b \big)\right)_\Theta
\end{align*}
We have already do the proof of the estimate $\int_0^t \abs{\psca{\Delta_q^h Q^1_{\Theta}, \Delta_q^h \Psi^1_{\Theta}}_{L^2}} dt'$ ( see the proof of the estimate $(4.76)$ in \cite{AN2020}), then 
\begin{align}\label{eq:Q^1}
\sum_{q\in \mathbb{Z}} 2^q \int_0^t \abs{\psca{\Delta_q^h Q^1_{\Theta}, \Delta_q^h \Psi^1_{\Theta}}_{L^2}} dt' \lesssim  \eps \Vert \pa_y u_{\varphi} \Vert_{\tilde{L}^2_{t}(\mathcal{B}^{\frac{3}{2}})} \Vert \eps w^1_{\varphi} \Vert_{\tilde{L}^2_{t}(\mathcal{B}^{\frac{3}{2}})} \notag \\ + \Vert u_{\varphi} \Vert_{\tilde{L}^\infty_{t}(\mathcal{B}^{\frac{3}{2}})}^{\f12} \Vert \pa_y w^1_{\varphi} \Vert_{\tilde{L}^2_{t}\mathcal{B}^{\frac{1}{2}})}  \Vert w^1_{\varphi} \Vert_{\tilde{L}^2_{t,\dot{\eta}(t)}(\mathcal{B}^1)} + \Vert w^1_{\varphi} \Vert_{\tilde{L}^2_{t,\dot{\eta}(t)}(\mathcal{B}^1)}^2
\end{align} 
So, we still have to get the estimate of 
$$ \int_0^t \abs{\psca{\Delta_q^h(b^{\eps} \pa_x \Phi^1 + \Phi^1\pa_x b)_\Theta, \Delta_q^h \Psi^1_\Theta }} dt' \text{ and } \int_0^t \abs{\psca{\Delta_q^h(c^{\eps} \pa_y \Phi^1 + \Phi^2\pa_y b)_\Theta, \Delta_q^h \Psi^1_\Theta }} dt'. $$
Then, we start by giving the estimate of the term $ \int_0^t \abs{\psca{\Delta_q^h(b^{\eps} \pa_x \Phi^1 + \Phi^1\pa_x b)_\Theta, \Delta_q^h \Psi^1_\Theta }} dt'.$ 

It follow from the lemma \ref{lem C+B+A} that 
\begin{align}\label{bepsPhiPsi}
\int_0^t \abs{\psca{\Delta_q^h(b^{\eps} \pa_x \Phi^1)_\Theta, \Delta_q^h \Psi^1_\Theta }} dt' \leq Cd_q^2 2^{-q} \Vert \Phi_\Theta \Vert_{\tilde{L}^2_{t,\dot{\eta}(t)}(\mathcal{B}^1)} \Vert \Psi_\Theta \Vert_{\tilde{L}^2_{t,\dot{\eta}(t)}(\mathcal{B}^1)}.
\end{align}
We note $$ I^q_1 = \int_0^t \abs{\psca{\Phi^1\pa_x b)_\Theta, \Delta_q^h \Psi^1_\Theta }} dt',$$ by applying Bony's decomposition \eqref{eq:Bony} for the horizontal variable to $\Phi^1\pa_x b$ we obtain 
$$ \Phi^1\pa_x b = T^h_{\Phi^1}\pa_xb + T^h_{\pa_xb}\Phi^1 + R^h(\pa_xb,\Phi^1).$$
and then, we have the following bound
\begin{align*}
	I_{1}^q = \int_0^t \abs{\psca{ \Delta_q^h  (T^h_{\Phi^1}\pa_xb + T^h_{\pa_xb}\Phi^1 + R^h(\pa_xb,\Phi^1))_{\Theta},\Delta_q^h \Psi^1_\Theta}_{L^2}} dt' \leq I_{1,1}^q + I_{1,2}^q +I_{1,3}^q
\end{align*}
with
\begin{align*} 
	I_{1,2}^q &= \int_0^t  \abs{\psca{\Delta_q^h  (T^h_{\pa_xb}\Phi^1)_{\Theta},\Delta_q^h \Psi^1_\Theta}_{L^2}} dt'\\
	I_{1,1}^q &= \int_0^t  \abs{\psca{\Delta_q^h  (T^h_{\Phi^1}\pa_xb)_{\Theta},\Delta_q^h \Psi^1_\Theta}_{L^2}} dt'\\
	I_{1,3}^q &= \int_0^t  \abs{\psca{\Delta_q^h  ( R^h(\pa_xb,\Phi^1)_{\Theta},\Delta_q^h \Psi^1_\Theta}_{L^2}} dt'.
\end{align*}
Using the support properties given in [\cite{B1981}, Proposition 2.10] and the definition of $T^h_{\Phi^1}\pa_xb$, we have 
\begin{align*}
	I_{1,1}^q &\leq   \sum_{|q-q'|\leq4} \int_0^t \Vert S_{q'-1}^h \Phi^1_{\Theta} \Vert_{L^\infty_h(L^2_v)} \Vert \Delta_{q'}^h \pa_x b_{\Theta} \Vert_{L^2_h(L^\infty_v)} \Vert \Delta_q^h \Psi^1_{\Theta} \Vert_{L^2} \\
	&\leq \sum_{|q-q'|\leq4} \int_0^t 2^{\frac{q'}{2}} \Vert S_{q'-1}^h \Phi^1_{\Theta} \Vert_{L^2} \Vert \Delta_{q'}^h \pa_x b_{\Theta} \Vert_{L^2_h(L^\infty_v)} \Vert \Delta_q^h \Psi^1_{\Theta} \Vert_{L^2}.
\end{align*}
Since 
$$ \Vert \Delta_{q'}^h \pa_x b_{\Theta} \Vert_{L^2_h(L^\infty_v)} \lesssim d_{q'}(b_\Theta) \Vert b_{\Theta} \Vert_{\mathcal{B}^{\frac{3}{2}}}^{\frac{1}{2}} \Vert \pa_y b_{\Theta} \Vert_{\mathcal{B}^{\frac{1}{2}}}^{\frac{1}{2}},  $$
then,
\begin{align*}
	I_{1,1}^q = \int_0^t \abs{\psca{\Delta_q^h  (T^h_{\Phi^1}\pa_x b)_{\Theta},\Delta_q^h \Psi^1_\Theta}} & \lesssim \sum_{|q-q'|\leq4} \int_0^t 2^{\frac{q'}{2}} \Vert S_{q'-1}^h \Phi^1_{\Theta}\Vert_{L^2} d_{q'}(b_\Theta) \Vert b_{\Theta} \Vert_{\mathcal{B}^{\frac{3}{2}}}^{\frac{1}{2}} \Vert \pa_y b_{\Theta} \Vert_{\mathcal{B}^{\frac{1}{2}}}^{\frac{1}{2}} \Vert  \Delta_q^h \Psi^1_{\Theta} \Vert_{L^2} \\
	&\lesssim \sum_{|q-q'|\leq4}\int_0^t  d_{q'}(b_\Theta) 2^{\frac{q'}{2}} 2^{\frac{-q'}{2}} \Vert \pa_y \Phi^1_\Theta \Vert_{\mathcal{B}^{\frac{1}{2}}} \Vert b_{\Theta} \Vert_{\mathcal{B}^{\frac{3}{2}}}^{\frac{1}{2}} \Vert \pa_y b_{\Theta} \Vert_{\mathcal{B}^{\frac{1}{2}}}^{\frac{1}{2}} 2^{-q} d_q(\Psi^1_\Theta) \Vert \Psi^1_{\Theta} \Vert_{\mathcal{B}^1} \\
	&\lesssim d_q^2 2^{-q} \Vert b_{\Theta} \Vert_{\tilde{L}^\infty_t(\mathcal{B}^{\frac{3}{2}})}^{\frac{1}{2}} \Vert \pa_y \Phi^1_\Theta \Vert_{\tilde{L}^2_t(\mathcal{B}^{\frac{1}{2}})} \left( \int_0^t \Vert \pa_y b_{\Theta} \Vert_{\mathcal{B}^{\frac{1}{2}}} \Vert \Psi^1_{\Theta} \Vert_{\mathcal{B}^1}^2 dt' \right)^\f12 \\
	&\lesssim C d_q^2 2^{-q} \Vert b_{\Theta} \Vert_{\tilde{L}^\infty_t(\mathcal{B}^{\frac{3}{2}})}^{\frac{1}{2}} \Vert \pa_y \Phi^1_\Theta \Vert_{\tilde{L}^2_t(\mathcal{B}^{\frac{1}{2}})}\Vert \Psi^1_{\Theta} \Vert_{\tilde{L}^2_{t,\dot{\eta}(t)}(\mathcal{B}^1)}
\end{align*}
where $\set{d_q^2}$ is a summable sequence, which implies 
\begin{align} \label{eq:I1,1}
	 I_{1,1}^q = \int_0^t \abs{\psca{\Delta_q^h  (T^h_{\Phi^1}\pa_x b)_{\Theta},\Delta_q^h \Psi^1_\Theta}}dt' \lesssim C d_q^2 2^{-q} \Vert b_{\Theta} \Vert_{\tilde{L}^\infty_t(\mathcal{B}^{\frac{3}{2}})}^{\frac{1}{2}} \Vert \pa_y \Phi^1_\Theta \Vert_{\tilde{L}^2_t(\mathcal{B}^{\frac{1}{2}})}\Vert \Psi^1_{\Theta} \Vert_{\tilde{L}^2_{t,\dot{\eta}(t)}(\mathcal{B}^1)}.
\end{align}
While observing that
$$ \Vert \Delta_q^h \pa_x b_{\Theta} \Vert_{L^{\infty}} \leq \sum_{l\leq q-2} 2^{\frac{3l}{2}} \Vert \Delta_l^h b_{\Theta} \Vert_{L^2}^{\frac{1}{2}} \Vert \Delta_l^h \pa_y b_\Theta \Vert_{L^2}^{\frac{1}{2}} \lesssim 2^q \Vert \pa_y b_\Theta \Vert_{\mathcal{B}^{\frac{1}{2}}},$$
so we can deduce 
\begin{align*}
	I_{1,2}^q &= \int_0^t \abs{\psca{\Delta_q^h  (T^h_{\pa_xb}\Phi^1)_{\Theta},\Delta_q^h \Psi^1_\Theta}}  \leq \sum_{|q-q'|\leq 4} \int_0^t  \Vert S_{q'-1}^h \pa_x b_{\Theta} \Vert_{L^{\infty}} \Vert \Delta_{q'}^h \Phi^1_\Theta \Vert_{L^2} \Vert \Delta_{q}^h \Psi^1_\Theta \Vert_{L^2} \\
	&\lesssim \sum_{|q-q'|\leq 4} \int_0^t  2^{q'} \Vert \pa_y b_\Theta \Vert_{\mathcal{B}^{\frac{1}{2}}} \Vert \Delta_{q'}^h \Phi^1_\Theta \Vert_{L^2} \Vert \Delta_{q}^h \Psi^1_\Theta \Vert_{^2} \\
	&\lesssim  \sum_{|q-q'|\leq 4} 2^{q'} \left( \int_0^t \Vert \pa_y b_\Theta \Vert_{\mathcal{B}^{\frac{1}{2}}} \Vert \Delta_{q'}^h \Phi^1_\Theta \Vert_{L^2}^2 dt' \right)^\f12  \left( \int_0^t \Vert \pa_y b_\Theta \Vert_{\mathcal{B}^{\frac{1}{2}}} \Vert \Delta_{q}^h \Psi^1_\Theta \Vert_{L^2}^2 dt' \right)^\f12 
\end{align*}
Using the definition of $\dot{\eta}(t)$ and Definition \ref{def:CLweight} we have
$$ \left( \int_0^t \Vert \pa_y b_\Theta \Vert_{\mathcal{B}^{\frac{1}{2}}} \Vert \Delta_{q}^h \Psi^1_\Theta \Vert_{L^2}^2 dt' \right)^\f12 \lesssim 2^{-q} d_q(\Psi^1_\Theta) \Vert \Psi^1_{\Theta} \Vert_{\tilde{L}^2_{t,\dot{\eta}(t)}(\mathcal{B}^1)}. $$
Then,
\begin{align} \label{eq:I1,2} I_{1,2}^q \lesssim  C2^{-q} d_q^2 \Vert \Psi^1_{\Theta} \Vert_{\tilde{L}^2_{t,\dot{\eta}(t)}(\mathcal{B}^1)}\Vert \Phi^1_{\Theta} \Vert_{\tilde{L}^2_{t,\dot{\eta}(t)}(\mathcal{B}^1)}, \end{align}
where
$$ d_q^2 = d_q(\Psi^1_\Theta) \left(\sum_{|q-q'|\leq 4} d_{q'}(\Phi^1_\Theta) \right) $$

In a similar way, we have 
\begin{align*}
	I_{1,3}^q &= \int_0^t \abs{\psca{\Delta_q^h  (R^h(\Phi^1,\pa_xb))_{\Theta},\Delta_q^h \Psi^1_\Theta}}dt' \lesssim 2^{\frac{q}{2}} \sum_{q'\geq q-3} \int_0^t \Vert \Delta_{q'}^h \Phi^1_\Theta \Vert_{L^2} \Vert \tilde{\Delta}_{q'}^h \pa_x b_\Theta \Vert_{L^2_h(L^\infty_v)} \Vert \Delta_{q}^h \Psi^1_\Theta \Vert_{L^2} dt' \\
	&\lesssim 2^{\frac{q}{2}} \sum_{q'\geq q-3} \int_0^t 2^{\frac{q'}{2}} \Vert \Delta_{q'}^h \Phi^1_\Theta \Vert_{L^2} \Vert \pa_y b_\Theta \Vert_{\mathcal{B}^{\f12}} \Vert \Delta_{q}^h \Psi^1_\Theta \Vert_{L^2} \\
	&\lesssim 2^{\frac{q}{2}} \sum_{q'\geq q-3} 2^{\frac{q'}{2}} \left(\int_0^t  \Vert \pa_y b_\Theta \Vert_{\mathcal{B}^{\f12}} \Vert \Delta_{q'}^h \Phi^1_\Theta \Vert_{L^2}^2 dt'  \right)^\f12 \left(\int_0^t  \Vert \pa_y b_\Theta \Vert_{\mathcal{B}^{\f12}} \Vert \Delta_{q}^h \Psi^1_\Theta \Vert_{L^2}^2 dt'  \right)^\f12.
\end{align*}
Using the definition of $\dot{\eta}(t)$ and Definition \ref{def:CLweight} we have
$$ \left( \int_0^t \Vert \pa_y b_\Theta \Vert_{\mathcal{B}^{\frac{1}{2}}} \Vert \Delta_{q}^h \Psi^1_\Theta \Vert_{L^2}^2 dt' \right)^\f12 \lesssim 2^{-q} d_q(\Psi^1_\Theta) \Vert \Psi^1_{\Theta} \Vert_{\tilde{L}^2_{t,\dot{\eta}(t)}(\mathcal{B}^1)}. $$
Then,
\begin{align} \label{eq:I1,3} I_{1,3}^q \lesssim  C2^{-q} d_q^2 \Vert \Psi^1_{\Theta} \Vert_{\tilde{L}^2_{t,\dot{\eta}(t)}(\mathcal{B}^1)}\Vert \Phi^1_{\Theta} \Vert_{\tilde{L}^2_{t,\dot{\eta}(t)}(\mathcal{B}^1)}, \end{align}
where
$$ d_q^2 = d_q(\Psi^1_\Theta) \left(\sum_{|q-q'|\leq 4} d_{q'}(\Phi^1_\Theta) \right) $$

Summing the estimates \eqref{eq:I1,1}, \eqref{eq:I1,2} and \eqref{eq:I1,3} we obtain 
\begin{align} \label{eq:I1q}
	 I_{1}^q  \lesssim Cd_q^2 2^{-q}\left( \Vert b_{\Theta} \Vert_{\tilde{L}^\infty_t(\mathcal{B}^{\frac{3}{2}})}^{\frac{1}{2}} \Vert \pa_y \Phi^1_\Theta \Vert_{\tilde{L}^2_t(\mathcal{B}^{\frac{1}{2}})} + \Vert \Phi^1_{\Theta} \Vert_{\tilde{L}^2_{t,\dot{\eta}(t)}(\mathcal{B}^1)}\right)\Vert \Psi^1_{\Theta} \Vert_{\tilde{L}^2_{t,\dot{\eta}(t)}(\mathcal{B}^1)}.
\end{align}

Now we move to get the estimate of the term $\int_0^t \abs{\psca{\Delta_q^h(c^{\eps} \pa_y \Phi^1 + \Phi^2\pa_y b)_\Theta, \Delta_q^h \Psi^1_\Theta }} dt' = J_1^q + J_2^q$, we start by $J_1^q$, we recall that $ c^\eps = c+\Phi^2$, then 
\begin{align*}
J_2^q &= \int_0^t \abs{\psca{\Delta_q^h(c^{\eps} \pa_y \Phi^1 )_\Theta, \Delta_q^h \Psi^1_\Theta }} dt' \\ &= \int_0^t \abs{\psca{\Delta_q^h(\Phi^2 \pa_y \Phi^1 )_\Theta, \Delta_q^h \Psi^1_\Theta }} dt' + \int_0^t \abs{\psca{\Delta_q^h(c \pa_y \Phi^1 )_\Theta, \Delta_q^h \Psi^1_\Theta }} dt' \\
&=J_{1,1}^q + J_{1,2}^q 
\end{align*}
 It follow from the lemma \ref{lem:ApaxA} that 
 \begin{align}\label{eq:J11}
 J_{1,1}^q  = \int_0^t \abs{\psca{\Delta_q^h(\Phi^2 \pa_y \Phi^1 )_\Theta, \Delta_q^h \Psi^1_\Theta }} dt' \leq Cd_q^2 2^{-q} \Vert \Phi_\Theta \Vert_{\tilde{L}_{t,\dot{\eta}(t)}^2(\mathcal{B}^1)} \Vert \Psi_\Theta \Vert_{\tilde{L}_{t,\dot{\eta}(t)}^2(\mathcal{B}^1)}.
 \end{align}
 For the second term $J_{1,2}^q$, we apply the Bony's decomposition \eqref{eq:Bony} for the horizontal variable to $c\pa_y\Phi^1$, we obtain 
 $$ c\pa_y\Phi^1 = T^h_c\pa_y \Phi^1 + T^h_{\pa_y \Phi^1}c + R^h(c,\pa_y\Phi^1),$$
 so, we have the following bound 
 $$J_{1,2}^q = \int_0^t \abs{\psca{\Delta_q^h(T^h_c\pa_y \Phi^1 + T^h_{\pa_y \Phi^1}c + R^h(c,\pa_y\Phi^1))_\Theta, \Delta_q^h \Psi^1_\Theta }} dt' \leq J_{1,21}^q + J_{1,22}^q +J_{1,23}^q $$
 where 
 \begin{align*}
 J_{1,21}^q &= \int_0^t \abs{\psca{\Delta_q^h( T^h_c\pa_y \Phi^1 )_\Theta, \Delta_q^h \Psi^1_\Theta }} dt'\\
 J_{1,22}^q &= \int_0^t \abs{\psca{\Delta_q^h(T^h_{\pa_y \Phi^1}c)_\Theta, \Delta_q^h \Psi^1_\Theta }} dt'\\
 J_{1,23}^q &= \int_0^t \abs{\psca{\Delta_q^h(R^h(c,\pa_y\Phi^1) )_\Theta, \Delta_q^h \Psi^1_\Theta }} dt'\\
 \end{align*}
 Using the support properties given in [\cite{B1981}, Proposition 2.10] and the definition of $T^h_c\pa_y \Phi^1$, we have 
\begin{align*}
	J_{1,21}^q \leq   \sum_{|q-q'|\leq4} \int_0^t \Vert S_{q'-1}^h c_{\Theta} \Vert_{L^\infty} \Vert \Delta_{q'}^h \pa_y \Phi^1_{\Theta} \Vert_{L^2} \Vert \Delta_q^h \Psi^1_{\Theta} \Vert_{L^2} dt'.
\end{align*}
Due to $\pa_x b +\pa_y c =0$ and Poincar\'e inequality, we can write $ c(t,x,y) = -\int_0^y \pa_x b(t,x,s) ds$, then we deduce from the lemma \ref{lem:Bernstein} that
Since 
 \begin{align}
 \Vert \Delta_{q'}^h c_{\Theta} \Vert_{L^\infty} &\leq \int_0^1 \Vert \Delta_{q'}^h \pa_x b_{\Theta}(t,x,s) \Vert_{L^\infty_h} ds \notag \\
 &\leq 2^{\frac{3q}{2}} \int_0^1 \Vert \Delta_{q'}^h b_{\Theta}(t,x,s) \Vert_{L^2_h} ds \leq 2^{\frac{3q}{2}} \Vert \Delta_{q'}^h b_{\Theta}(t,x,s) \Vert_{L^2}
 \end{align}
then,
$$ \Vert S_{q'-1}^h c_{\Theta} \Vert_{L^\infty} \leq 2^\frac{q'}{2} \Vert b_\Theta(t') \Vert_{\mathcal{B}^\f32}^\f12 \Vert \pa_y b_\Theta(t') \Vert_{\mathcal{B}^\f12}^\f12, $$ 
from which we infer
\begin{align*}
	J_{1,21}^q &\lesssim \sum_{|q-q'|\leq4} \int_0^t \Vert S_{q'-1}^h c_{\Theta} \Vert_{L^\infty} \Vert \Delta_{q'}^h \pa_y \Phi^1_{\Theta} \Vert_{L^2} \Vert \Delta_q^h \Psi^1_{\Theta} \Vert_{L^2} dt' \\ &\lesssim \sum_{|q-q'|\leq4} \int_0^t 2^\frac{q'}{2} \Vert b_\Theta(t') \Vert_{\mathcal{B}^\f32}^\f12 \Vert \pa_y b_\Theta(t') \Vert_{\mathcal{B}^\f12}^\f12 \Vert \Delta_{q'}^h \pa_y \Phi^1_{\Theta} \Vert_{L^2} \Vert \Delta_q^h \Psi^1_{\Theta} \Vert_{L^2} dt'\\
& \lesssim	\sum_{|q-q'|\leq4} \int_0^t 2^\frac{q'}{2} \Vert b_\Theta(t') \Vert_{\mathcal{B}^\f32}^\f12  \Vert \Delta_{q'}^h \pa_y \Phi^1_{\Theta} \Vert_{L^2}\Vert \pa_y b_\Theta(t') \Vert_{\mathcal{B}^\f12}^\f12  \Vert \Delta_q^h \Psi^1_{\Theta} \Vert_{L^2} dt'\\
&\lesssim \sum_{|q-q'|\leq4} 2^\frac{q'}{2}\Vert b_\Theta(t') \Vert_{L^\infty_t(\mathcal{B}^\f32)}^\f12 \Vert \Delta_{q'}^h \pa_y \Phi^1_{\Theta} \Vert_{L^2_t(L^2)} \left( \int_0^t \Vert \pa_y b_\Theta(t') \Vert_{\mathcal{B}^\f12} \Vert \Delta_q^h \Psi^1_{\Theta} \Vert_{L^2}^2 dt' \right)^\f12 \\
&\lesssim Cd_q^2 2^{-q}\Vert b_\Theta(t') \Vert_{L^\infty_t(\mathcal{B}^\f32)}^\f12 \Vert  \pa_y \Phi^1_{\Theta} \Vert_{\tilde{L}^2_t(\mathcal{B}^\f12)} \Vert \Psi^1_\Theta \Vert_{\tilde{L}^2_{t,\dot{\eta}(t)}(\mathcal{B}^1)}
\end{align*}
where $\set{d_q^2}$ is a summable sequence, which implies 
\begin{align} \label{eq:J1,11}
	 J_{1,21}^q = Cd_q^2 2^{-q}\Vert b_\Theta \Vert_{L^\infty_t(\mathcal{B}^\f32)}^\f12 \Vert  \pa_y \Phi^1_{\Theta} \Vert_{\tilde{L}^2_t(\mathcal{B}^\f12)} \Vert \Psi^1_\Theta \Vert_{\tilde{L}^2_{t,\dot{\eta}(t)}(\mathcal{B}^1)}.
\end{align}

While observing that
$$ \Vert \Delta_q^h c_{\Theta} \Vert_{L^2_h(L^\infty_v)} \leq d_q \Vert b_\Theta(t') \Vert_{\mathcal{B}^\f32}^\f12 \Vert \pa_y b_\Theta(t') \Vert_{\mathcal{B}^\f12}^\f12,$$
so we can deduce 
\begin{align*}
	J_{1,22}^q &= \int_0^t \abs{\psca{\Delta_q^h  (T^h_{\pa_y \Phi^1}c)_{\Theta},\Delta_q^h \Psi^1_\Theta}} dt'\\ & \leq \sum_{|q-q'|\leq 4} \int_0^t  \Vert S_{q'-1}^h \pa_y \Phi^1_{\Theta} \Vert_{L^{\infty}_h(L^2_v)} \Vert \Delta_{q'}^h c_\Theta \Vert_{L^2_h(L^\infty_v)} \Vert \Delta_{q}^h \Psi^1_\Theta \Vert_{L^2} dt' \\
	&\lesssim \sum_{|q-q'|\leq 4} \int_0^t \Vert S_{q'-1}^h \pa_y \Phi^1_{\Theta} \Vert_{L^{\infty}_h(L^2_v)} d_{q'}(b_\Theta) \Vert b_\Theta(t') \Vert_{\mathcal{B}^\f32}^\f12 \Vert \pa_y b_\Theta(t') \Vert_{\mathcal{B}^\f12}^\f12  \Vert \Delta_{q}^h \Psi^1_\Theta \Vert_{L^2} dt'\\
	&\lesssim \sum_{|q-q'|\leq 4}d_{q'}(b_\Theta)\Vert b_\Theta(t') \Vert_{L^\infty_t(\mathcal{B}^\f32)}^\f12 \Vert S_{q'-1}^h \pa_y \Phi^1_{\Theta} \Vert_{L^2_t(L^{\infty}_h(L^2_v))} \left( \int_0^t  \Vert \pa_y b_\Theta(t') \Vert_{\mathcal{B}^\f12} \Vert \Delta_{q}^h \Psi^1_\Theta \Vert_{L^2}^2 dt' \right)^\f12\\
	&\lesssim Cd_q^2 2^{-q} \Vert b_\Theta(t') \Vert_{L^\infty_t(\mathcal{B}^\f32)}^\f12 \Vert \pa_y \Phi^1_{\Theta} \Vert_{\tilde{L}^2_t(\mathcal{B}^\f12)} \Vert \Psi^1_\Theta \Vert_{\tilde{L}^2_{t,\dot{\eta}(t)}(\mathcal{B}^1)}.
\end{align*}
Using the definition of $\dot{\eta}(t)$ and Definition \ref{def:CLweight} we have
$$ \left( \int_0^t \Vert \pa_y b_\Theta \Vert_{\mathcal{B}^{\frac{1}{2}}} \Vert \Delta_{q}^h \Psi^1_\Theta \Vert_{L^2}^2 dt' \right)^\f12 \lesssim 2^{-q} d_q(\Psi^1_\Theta) \Vert \Psi^1_{\Theta} \Vert_{\tilde{L}^2_{t,\dot{\eta}(t)}(\mathcal{B}^1)}. $$
Then,
\begin{align} \label{eq:J1,12} J_{1,22}^q \lesssim  C2^{-q} d_q^2\Vert b_\Theta \Vert_{L^\infty_t(\mathcal{B}^\f32)}^\f12 \Vert \pa_y \Phi^1_{\Theta} \Vert_{\tilde{L}^2_t(\mathcal{B}^\f12)} \Vert \Psi^1_\Theta \Vert_{\tilde{L}^2_{t,\dot{\eta}(t)}(\mathcal{B}^1)}, \end{align}
where
$$ d_q^2 = d_q(\Psi^1_\Theta) \left(\sum_{|q-q'|\leq 4} d_{q'}(b_\Theta) \right) $$

In a similar way, we have 
\begin{align*}
	J_{1,23}^q &= \int_0^t \abs{\psca{\Delta_q^h  (R^h(c,\pa_y \Phi^1))_{\Theta},\Delta_q^h \Psi^1_\Theta}}dt'\\ & \lesssim 2^{\frac{q}{2}} \sum_{q'\geq q-3} \int_0^t \Vert \Delta_{q'}^h \pa_y \Phi^1_\Theta \Vert_{L^2} \Vert \tilde{\Delta}_{q'}^h c_\Theta \Vert_{L^2_h(L^\infty_v)} \Vert \Delta_{q}^h \Psi^1_\Theta \Vert_{L^2} dt' \\
	&\lesssim 2^{\frac{q}{2}} \sum_{q'\geq q-3} \int_0^t \Vert b_\Theta(t') \Vert_{\mathcal{B}^\f32}^\f12 \Vert \pa_y b_\Theta(t') \Vert_{\mathcal{B}^\f12}^\f12  \Vert \Delta_{q'}^h \pa_y \Phi^1_\Theta \Vert_{L^2} \Vert \Delta_{q}^h \Psi^1_\Theta \Vert_{L^2} dt'\\
	&\lesssim 2^{\frac{q}{2}} \sum_{q'\geq q-3} \Vert b_\Theta(t') \Vert_{L^\infty_t(\mathcal{B}^\f32)}^\f12 \Vert \Delta_{q'}^h \pa_y \Phi^1_\Theta \Vert_{L^2_t(L^2)} \left(\int_0^t  \Vert \pa_y b_\Theta \Vert_{\mathcal{B}^{\f12}} \Vert \Delta_{q}^h \Psi^1_\Theta \Vert_{L^2}^2 dt'  \right)^\f12.
\end{align*}
Using the definition of $\dot{\eta}(t)$ and Definition \ref{def:CLweight} we have
$$ \left( \int_0^t \Vert \pa_y b_\Theta \Vert_{\mathcal{B}^{\frac{1}{2}}} \Vert \Delta_{q}^h \Psi^1_\Theta \Vert_{L^2}^2 dt' \right)^\f12 \lesssim 2^{-q} d_q(\Psi^1_\Theta) \Vert \Psi^1_{\Theta} \Vert_{\tilde{L}^2_{t,\dot{\eta}(t)}(\mathcal{B}^1)}. $$
As a consequence, we arrive at
\begin{align} \label{eq:J1,13} J_{1,23}^q \lesssim  Cd_q^2 2^{-q} \Vert b_\Theta \Vert_{L^\infty_t(\mathcal{B}^\f32)}^\f12 \Vert \pa_y \Phi^1_{\Theta} \Vert_{\tilde{L}^2_t(\mathcal{B}^\f12)} \Vert \Psi^1_\Theta \Vert_{\tilde{L}^2_{t,\dot{\eta}(t)}(\mathcal{B}^1)}. \end{align}

Summing the estimates \eqref{eq:J1,11}, \eqref{eq:J1,12} and \eqref{eq:J1,13} we obtain 
\begin{align} \label{eq:J12q}
	 J_{1,2}^q  \lesssim Cd_q^2 2^{-q} \Vert b_\Theta \Vert_{L^\infty_t(\mathcal{B}^\f32)}^\f12 \Vert \pa_y \Phi^1_{\Theta} \Vert_{\tilde{L}^2_t(\mathcal{B}^\f12)} \Vert \Psi^1_\Theta \Vert_{\tilde{L}^2_{t,\dot{\eta}(t)}(\mathcal{B}^1)}.
\end{align}

Now we will get the estimate of the last term $J_2^q = \int_0^t \abs{\psca{\Delta_q^h  (\Phi^2\pa_y b)_{\Theta},\Delta_q^h \Psi^1_\Theta}}dt' $. For this end, we apply the Bony's decomposition \eqref{eq:Bony} for the horizontal variable to $\Phi^2\pa_y b$, we obtain 
 $$ \Phi^2\pa_y b = T^h_{\Phi^2}\pa_y b + T^h_{\pa_y b}\Phi^2 + R^h(\pa_yb,\Phi^2),$$
 so, we have the following bound 
 $$J_{2}^q = \int_0^t \abs{\psca{\Delta_q^h(T^h_{\Phi^2}\pa_y b + T^h_{\pa_y b}\Phi^2 + R^h(\pa_yb,\Phi^2))_\Theta, \Delta_q^h \Psi^1_\Theta }} dt' \leq J_{2,1}^q + J_{2,2}^q +J_{2,3}^q $$
 where 
 \begin{align*}
 J_{2,1}^q &= \int_0^t \abs{\psca{\Delta_q^h( T^h_{\Phi^2}\pa_y b )_\Theta, \Delta_q^h \Psi^1_\Theta }} dt'\\
 J_{2,2}^q &= \int_0^t \abs{\psca{\Delta_q^h(T^h_{\pa_y b}\Phi^2)_\Theta, \Delta_q^h \Psi^1_\Theta }} dt'\\
 J_{2,3}^q &= \int_0^t \abs{\psca{\Delta_q^h(R^h(\pa_yb,\Phi^2) )_\Theta, \Delta_q^h \Psi^1_\Theta }} dt'\\
 \end{align*}
 Using the support properties given in [\cite{B1981}, Proposition 2.10] and the definition of $ T^h_{\Phi^2}\pa_y b$, we have 
\begin{align*}
	J_{2,1}^q &\leq   \sum_{|q-q'|\leq4} \int_0^t \Vert S_{q'-1}^h \Phi^2_{\Theta} \Vert_{L^\infty} \Vert \Delta_{q'}^h \pa_y b_{\Theta} \Vert_{L^2} \Vert \Delta_q^h \Psi^1_{\Theta} \Vert_{L^2} dt'\\
	&\lesssim \sum_{|q-q'|\leq4} \int_0^t d_{q'}2^{-\frac{q'}{2}}\Vert S_{q'-1}^h \Phi^2_{\Theta} \Vert_{L^\infty} \Vert \pa_y b_{\Theta} \Vert_{\mathcal{B}^\f12} \Vert \Delta_q^h \Psi^1_{\Theta} \Vert_{L^2} dt'
\end{align*}
Due to $\pa_x \Phi^1 +\pa_y \Phi^2 =0$ and Poincar\'e inequality, we can write $ \Phi^2(t,x,y) = -\int_0^y \pa_x \Phi^1(t,x,s) ds$, then we deduce from the lemma \ref{lem:Bernstein} that
Since 
 \begin{align}
 \Vert \Delta_{q'}^h \Phi^2_{\Theta} \Vert_{L^\infty} &\leq \int_0^1 \Vert \Delta_{q'}^h \pa_x \Phi^1_{\Theta}(t,x,s) \Vert_{L^\infty_h} ds \notag \\
 &\leq 2^{\frac{3q}{2}} \int_0^1 \Vert \Delta_{q'}^h  \Phi^1_{\Theta}(t,x,s) \Vert_{L^2_h} ds \leq 2^{\frac{3q}{2}} \Vert \Delta_{q'}^h \Phi^1_{\Theta}(t,x,s) \Vert_{L^2} \notag
 \end{align}
from which, we infer
\begin{align*}
    &\left(\int_0^t \Vert S_{q'-1}^h \Phi^2_{\Theta} \Vert_{L^\infty}^2 \Vert \pa_y b_{\Theta} \Vert_{\mathcal{B}^\f12}dt' \right)^\f12 \\
    &\sum_{l\leq q'-2} 2^{\frac{3l}{2}} \left(\int_0^t \Vert \Delta_l^h \Phi^1_{\Theta} \Vert_{L^2}^2 \Vert \pa_y b_{\Theta} \Vert_{\mathcal{B}^\f12}dt' \right)^\f12 \\
    &\lesssim \sum_{l\leq q'-2}d_l 2^{\frac{l}{2}} \Vert \Phi^1(t) \Vert_{\tilde{L}^2_{t,\dot{\eta}(t)}(\mathcal{B}^1)} \\
    &\lesssim 2^{\frac{q'}{2}} \Vert \Phi^1(t) \Vert_{\tilde{L}^2_{t,\dot{\eta}(t)}(\mathcal{B}^1)}.
\end{align*}
Then we replace in $J_{2,1}^q$, we obtain
\begin{align} \label{eq:J2,1}
	 J_{2,1}^q = Cd_q^2 2^{-q} \Vert \Phi^1_{\Theta} \Vert_{\tilde{L}^2_{t,\dot{\eta}(t)}(\mathcal{B}^1)} \Vert \Psi^1_\Theta \Vert_{\tilde{L}^2_{t,\dot{\eta}(t)}(\mathcal{B}^1)}.
\end{align}

Now let get the estimate of the second term $J_{2,2}$
\begin{align*}
	J_{2,2}^q &= \int_0^t \abs{\psca{\Delta_q^h  (T^h_{\pa_y b}\Phi^2)_{\Theta},\Delta_q^h \Psi^1_\Theta}} dt'\\ &\leq \sum_{|q-q'|\leq 4} \int_0^t  \Vert S_{q'-1}^h \pa_y b_{\Theta} \Vert_{L^{\infty}_h(L^2_v)} \Vert \Delta_{q'}^h \Phi^2_\Theta \Vert_{L^2_h(L^\infty_v)} \Vert \Delta_{q}^h \Psi^1_\Theta \Vert_{L^2} dt' \\
	&\lesssim \sum_{|q-q'|\leq 4} 2^{q'} \int_0^t  \Vert \pa_y b_{\Theta} \Vert_{\mathcal{B}^\f12} \Vert \Delta_{q'}^h \Phi^1_\Theta \Vert_{L^2} \Vert \Delta_{q}^h \Psi^1_\Theta \Vert_{L^2} dt' \\
	&\lesssim \sum_{|q-q'|\leq 4} 2^{q'} \left( \int_0^t \Vert \pa_y b_{\Theta} \Vert_{\mathcal{B}^\f12} \Vert \Delta_{q'}^h \Phi^1_\Theta \Vert_{L^2}^2 dt'\right)^\f12\left( \int_0^t\Vert \pa_y b_{\Theta} \Vert_{\mathcal{B}^\f12} \Vert \Delta_{q'}^h \Psi^1_\Theta \Vert_{L^2}^2 dt'\right)^\f12
\end{align*}
Using the definition of $\dot{\eta}(t)$ and Definition \ref{def:CLweight} we have
$$ \left( \int_0^t \Vert \pa_y b_\Theta \Vert_{\mathcal{B}^{\frac{1}{2}}} \Vert \Delta_{q}^h \Psi^1_\Theta \Vert_{L^2}^2 dt' \right)^\f12 \lesssim 2^{-q} d_q(\Psi^1_\Theta) \Vert \Psi^1_{\Theta} \Vert_{\tilde{L}^2_{t,\dot{\eta}(t)}(\mathcal{B}^1)}. $$
Then,
\begin{align} \label{eq:J2,2} J_{2,2}^q \lesssim  C2^{-q} d_q^2 \Vert \Phi^1_\Theta \Vert_{\tilde{L}^2_{t,\dot{\eta}(t)}(\mathcal{B}^1)} \Vert \Psi^1_\Theta \Vert_{\tilde{L}^2_{t,\dot{\eta}(t)}(\mathcal{B}^1)}, \end{align}
where
$$ d_q^2 = d_q(\Psi^1_\Theta) \left(\sum_{|q-q'|\leq 4} d_{q'}(\Phi^1_\Theta) \right) $$

In a similar way, we have 
\begin{align*}
	J_{2,3}^q &= \int_0^t \abs{\psca{\Delta_q^h  (R^h(\Phi^2,\pa_y b))_{\Theta},\Delta_q^h \Psi^1_\Theta}}dt'\\ &\lesssim 2^{\frac{q}{2}} \sum_{q'\geq q-3} \int_0^t \Vert \tilde{\Delta}_{q'}^h \pa_y b_\Theta \Vert_{L^2} \Vert \Delta_{q'}^h \Phi^2_\Theta \Vert_{L^2_h(L^\infty_v)} \Vert \Delta_{q}^h \Psi^1_\Theta \Vert_{L^2} dt' \\
	&\lesssim 2^{\frac{q}{2}} \sum_{q'\geq q-3} 2^{\frac{q'}{2}} \int_0^t \Vert \pa_y b_\Theta \Vert_{\mathcal{B}^\f12} \Vert \Delta_{q'}^h \Phi^1_\Theta \Vert_{L^2} \Vert \Delta_{q}^h \Psi^1_\Theta \Vert_{L^2} dt' \\
	&\lesssim  2^{\frac{q}{2}} \sum_{q'\geq q-3} 2^{\frac{q'}{2}} \left( \int_0^t \Vert \pa_y b_{\Theta} \Vert_{\mathcal{B}^\f12} \Vert \Delta_{q'}^h \Phi^1_\Theta \Vert_{L^2}^2 dt'\right)^\f12\left( \int_0^t\Vert \pa_y b_{\Theta} \Vert_{\mathcal{B}^\f12} \Vert \Delta_{q'}^h \Psi^1_\Theta \Vert_{L^2}^2 dt'\right)^\f12.
\end{align*}
Using the definition of $\dot{\eta}(t)$ and Definition \ref{def:CLweight} we have
$$ \left( \int_0^t \Vert \pa_y b_\Theta \Vert_{\mathcal{B}^{\frac{1}{2}}} \Vert \Delta_{q}^h \Psi^1_\Theta \Vert_{L^2}^2 dt' \right)^\f12 \lesssim 2^{-q} d_q(\Psi^1_\Theta) \Vert \Psi^1_{\Theta} \Vert_{\tilde{L}^2_{t,\dot{\eta}(t)}(\mathcal{B}^1)}. $$
As a consequence, we arrive at
\begin{align} \label{eq:J2,3} J_{2,3}^q \lesssim  C2^{-q} d_q^2 \Vert \Psi^1_{\Theta} \Vert_{\tilde{L}^2_{t,\dot{\eta}(t)}(\mathcal{B}^1)}\Vert \Phi^1_{\Theta} \Vert_{\tilde{L}^2_{t,\dot{\eta}(t)}(\mathcal{B}^1)}. \end{align}

Summing the estimates \eqref{eq:J2,1}, \eqref{eq:J2,2} and \eqref{eq:J2,3} we obtain 
\begin{align} \label{eq:J2q}
	 J_{2}^q  \lesssim Cd_q^2 2^{-q} \Vert \Psi^1_{\Theta} \Vert_{\tilde{L}^2_{t,\dot{\eta}(t)}(\mathcal{B}^1)}\Vert \Phi^1_{\Theta} \Vert_{\tilde{L}^2_{t,\dot{\eta}(t)}(\mathcal{B}^1)}.
\end{align}
By summing all the resulting estimates we obtain the proof of the estimate \eqref{eq:G1}.
\end{proof}
\bigskip

We will now study the second term $G_2^q$.
\begin{proof} of the estimate \eqref{eq:G2}. Using the definition of $R^2_\Theta$, we write 
\begin{align*}
	R^2_\Theta &= - \eps^2\left(\pa_t v -\eps^2 \pa_x^2 v -\pa_y^2 v + u^{\eps} \pa_x v^{\eps} + v^\eps \pa_y v^\eps + b^\eps \pa_x c^\eps + c^\eps \pa_y c^\eps \right)_\Theta \\
	&= Q^2_\Theta + (b^\eps \pa_x c^\eps)_\Theta + (c^\eps \pa_y c^\eps)_\Theta.
\end{align*}
We have already do the proof of the estimate $\int_0^t \abs{\psca{\Delta_q^h Q^2_{\Theta}, \Delta_q^h \Psi^2_{\Theta}}_{L^2}} dt'$ ( see the proof of the estimate $(4.79)$ in \cite{AN2020}), then 
\begin{align}\label{eq:Q^2}
 \sum_{q\in \mathbb{Z}} 2^q  &\int_0^t \abs{\psca{\Delta_q^h Q^2_{\Theta}, \Delta_q^h \Psi^2_{\Theta}}_{L^2}} dt'\\
&\lesssim \Vert (\Psi^1_\Theta,\eps \Psi^2_\Theta) \Vert_{\tilde{L}^2_{t,\dot{\eta}(t)}(\mathcal{B}^{1})}  + \eps^2 \Vert \Psi^2_\Theta \Vert_{\tilde{L}^2_{t,\dot{\eta}(t)}(\mathcal{B}^1)} (\Vert \Psi^2_\Theta \Vert_{\tilde{L}^2_{t,\dot{\eta}(t)}(\mathcal{B}^1)}  \notag \\ 
&+ \eps^2\Vert (\pa_y \Psi^2_\Theta,\eps \pa_x\Psi^2_\Theta)\Vert_{\tilde{L}^2_{t}(\mathcal{B}^{\f12})}\left( \Vert (\pa_t u)_\Theta \Vert_{\tilde{L}^2_{t}(\mathcal{B}^{\f32})} + \Vert \pa_y u_\Theta \Vert_{\tilde{L}^2_{t}(\mathcal{B}^{\f32})} +  \Vert \pa_y u_\Theta \Vert_{\tilde{L}^2_{t}(\mathcal{B}^{\f52})}\right)   \notag \\
&+\Vert u^\eps_\Theta \Vert_{\tilde{L}^\infty_{t}(\mathcal{B}^{\f12})}^{\f12} \Vert \pa_y u_\Theta \Vert_{\tilde{L}^2_{t}(\mathcal{B}^2)} + \Vert u_\Theta \Vert_{\tilde{L}^\infty_{t}(\mathcal{B}^{\f32})}^\f12(\Vert \pa_y \Psi^2_\Theta \Vert_{\tilde{L}^2_{t}(\mathcal{B}^{\f12})} + \Vert \pa_y u_\Theta \Vert_{\tilde{L}^2_{t}(\mathcal{B}^{\f32})})) . \notag
\end{align} 
So, we still have to get the estimate of 
 \begin{align*}
 J_3^q &= \int_0^t \abs{\psca{\Delta_q^h(b^\eps \pa_x c^\eps)_\Theta, \Delta_q^h \Psi^2_\Theta }} dt' \\
 J_4^q &= \int_0^t \abs{\psca{\Delta_q^h(c^\eps \pa_y c^\eps)_\Theta, \Delta_q^h \Psi^2_\Theta }} dt'.
 \end{align*}
Then, we start by giving the estimate of the term $J_3^q = \int_0^t \abs{\psca{\Delta_q^h(b^\eps \pa_x c^\eps)_\Theta, \Delta_q^h \Psi^2_\Theta }} dt'.$ We write 
\begin{align*}
	J_3^q \leq \eps^2\pare{J_{31}^q + J_{32}^q},
\end{align*}
where
\begin{align*}
	J_{31}^q &= \int_0^t \abs{\psca{\Delta_q^h(b^\eps \pa_x \Phi^2)_{\Theta},\Delta_q^h \Psi^2_\Theta}_{L^2}} dt' \\
	J_{32}^q &= \int_0^t \abs{\psca{\Delta_q^h(b^\eps \pa_x c)_{\Theta},\Delta_q^h \Psi^2_\Theta}_{L^2}} dt'.
\end{align*}
It follows from Lemma $\ref{lem C+B+A}$ that 
\begin{align*} %\label{eq:J1}
	\sum_{q\in\mathbb{Z}} 2^q J_{31}^q = \sum_{q\in\mathbb{Z}} 2^q\int_0^t \abs{\psca{\Delta_q^h(b^\eps \pa_x \Phi^2)_\Theta,\Delta_q^h \Psi^2_\Theta}_{L^2}} dt' \lesssim    \Vert \eps \Psi^2_\Theta \Vert_{\tilde{L}^2_{t,\dot{\eta}(t)}(\mathcal{B}^1)} \Vert \eps \Phi^2_\Theta \Vert_{\tilde{L}^2_{t,\dot{\eta}(t)}(\mathcal{B}^1)}.
\end{align*}
For the second term, Bony's decomposition for the horizontal variable gives 
\begin{align*}
J_{32}^q = \int_0^t \abs{\psca{\Delta_q^h(b^\eps \pa_x c)_{\Theta},\Delta_q^h \Psi^2_\Theta}_{L^2}} dt' \leq J_{321}^q + J_{322}^q + J_{323}^q,
\end{align*}
with
\begin{align*}
J_{321}^q &= \int_0^t \abs{\psca{\Delta_q^h(T^h_{b^\eps} \pa_x c)_{\Theta},\Delta_q^h \Psi^2_\Theta}_{L^2}} dt' \\
J_{322}^q &= \int_0^t \abs{\psca{\Delta_q^h(T^h_{\pa_x c} b^\eps)_{\Theta},\Delta_q^h \Psi^2_\Theta}_{L^2}} dt'\\
J_{323}^q &= \int_0^t \abs{\psca{\Delta_q^h(R^h(b^\eps,\pa_x c))_{\Theta},\Delta_q^h \Psi^2_\Theta}_{L^2}} dt'.
\end{align*}
Using the estimate 
$$ \Vert S_{q'-1}^h b^\eps_\Theta \Vert_{L^\infty} \lesssim \Vert b_\Theta^\eps \Vert^{\f12}_{\mathcal{B}^{\f12}} \Vert \pa_y b_\Theta^\eps \Vert^{\f12}_{\mathcal{B}^{\f12}} ,$$
and the relation \eqref{vv}, we have 
\begin{align*}
J_{321}^q = \int_0^t \abs{\psca{\Delta_q^h(T^h_{b^\eps} \pa_x c)_\Theta,\Delta_q^h \Psi^2_\Theta}_{L^2}} dt' &\lesssim \sum_{|q'-q| \leq 4} \int_0^t  \Vert S_{q'-1}^h b^\eps_\Theta \Vert_{L^\infty} \Vert \Delta_{q'}^h \pa_x c_\Theta \Vert_{L^2} \Vert \Delta_q^h \Psi^2_\Theta \Vert_{L^2} dt' \\
&\lesssim \sum_{|q'-q| \leq 4} \int_0^t \Vert b_\Theta^\eps \Vert^{\f12}_{\mathcal{B}^{\f12}} \Vert \pa_y b_\Theta^\eps \Vert^{\f12}_{\mathcal{B}^{\f12}} 2^{2q'} \Vert \Delta_{q'}^h b_\Theta \Vert_{L^2} \Vert \Delta_q^h \Psi^2_\Theta \Vert_{L^2} dt' \\
&\lesssim d_q^2 2^{-q} \Vert b_\Theta^\eps \Vert^{\f12}_{\tilde{L}^\infty_{t}(\mathcal{B}^{\f12})} \Vert \pa_y b_{\Theta} \Vert_{\tilde{L}^2_{t}(\mathcal{B}^2)}  \Vert \Psi^2_\Theta \Vert_{\tilde{L}^2_{t,\dot{\eta}(t)}(\mathcal{B}^1)}.
\end{align*}
Multiply the above inequality by $2^q$ and summing over $q \in \ZZ$, we obtain
\begin{equation} \label{eq:J321}
\sum_{q\in\mathbb{Z}} 2^q J_{321}^q \lesssim \Vert b_\Theta^\eps \Vert^{\f12}_{\tilde{L}^\infty_{t}(\mathcal{B}^{\f12})} \Vert \pa_y b_{\Theta} \Vert_{\tilde{L}^2_{t}(\mathcal{B}^2)}  \Vert \Psi^2_\Theta \Vert_{\tilde{L}^2_{t,\dot{\eta}(t)}(\mathcal{B}^1)}.
\end{equation}
In a similar way, the fact that
\begin{align*}
\Vert S_{q'-1}^h \pa_x c_\Theta \Vert_{L^\infty} \lesssim \int_0^y \Vert S_{q'-1}^h \pa_x (\pa_x b_\Theta(t,x,s) \Vert_{L^\infty} ds \lesssim 2^{\frac{q'}{2}} \Vert \pa_y b_\Theta \Vert_{\mathcal{B}^2},
\end{align*}
leads to 
\begin{align*}
J_{322}^q = \int_0^t \abs{\psca{\Delta_q^h(T^h_{\pa_x c} b^\eps)_\Theta,\Delta_q^h \Psi^2_\Theta}_{L^2}}dt' \lesssim d_q^2 2^{-q} \Vert b_\Theta^\eps \Vert^{\f12}_{\tilde{L}^\infty_{t}(\mathcal{B}^{\f12})} \Vert \pa_y b_{\Theta} \Vert_{\tilde{L}^2_{t}(\mathcal{B}^2)}  \Vert \Psi^2_\Theta \Vert_{\tilde{L}^2_{t,\dot{\eta}(t)}(\mathcal{B}^1)}.
\end{align*}
So multiply by $2^q$ and summing over $q \in \ZZ$ imply
\begin{equation} \label{eq:J322}
\sum_{q\in\mathbb{Z}} 2^q J_{322}^q \lesssim \Vert b_\Theta^\eps \Vert^{\f12}_{\tilde{L}^\infty_{t}(\mathcal{B}^{\f12})} \Vert \pa_y b_{\Theta} \Vert_{\tilde{L}^2_{t}(\mathcal{B}^2)}  \Vert \Psi^2_\Theta \Vert_{\tilde{L}^2_{t,\dot{\eta}(t)}(\mathcal{B}^1)}.
\end{equation}
For the last term $J_{323}^q$, we have 
\begin{align*}
J_{323}^q = \int_0^t \abs{\psca{ \Delta_q^h(R^h(\pa_x c, b^\eps))_\Theta,\Delta_q^h \Psi^2_\Theta}_{L^2}} &\lesssim d_q^2 2^{-q} \Vert b_\Theta^\eps \Vert^{\f12}_{\tilde{L}^\infty_{t}(\mathcal{B}^{\f12})} \Vert \pa_y b_{\Theta} \Vert_{\tilde{L}^2_{t}(\mathcal{B}^2)}  \Vert \Psi^2_\Theta \Vert_{\tilde{L}^2_{t,\dot{\eta}(t)}(\mathcal{B}^1)}.
\end{align*}
Multplying the result by $2^q$, and summing over $q \in \ZZ$, we get
\begin{equation} \label{eq:J323}
\sum_{q\in\mathbb{Z}} 2^q J_{323}^q \lesssim \Vert b_\Theta^\eps \Vert^{\f12}_{\tilde{L}^\infty_{t}(\mathcal{B}^{\f12})} \Vert \pa_y b_{\Theta} \Vert_{\tilde{L}^2_{t}(\mathcal{B}^2)}  \Vert \Psi^2_\Theta \Vert_{\tilde{L}^2_{t,\dot{\eta}(t)}(\mathcal{B}^1)}.
\end{equation}
Summing Inequalities \eqref{eq:J321}, \eqref{eq:J322} and \eqref{eq:J323} finally yields 
$$ \sum_{q\in\mathbb{Z}} 2^q J_{32}^q \lesssim \Vert b_\Theta^\eps \Vert^{\f12}_{\tilde{L}^\infty_{t}(\mathcal{B}^{\f12})} \Vert \pa_y b_{\Theta} \Vert_{\tilde{L}^2_{t}(\mathcal{B}^2)}  \Vert \Psi^2_\Theta \Vert_{\tilde{L}^2_{t,\dot{\eta}(t)}(\mathcal{B}^1)}. $$

To end, we need to estimate the last term $ J_4^q = \int_0^t \abs{\psca{\Delta_q^h(c^\eps \pa_y c^\eps)_\Theta, \Delta_q^h \Psi^2_\Theta }} dt'$. We first note that 
$$ c^\eps \pa_y c^\eps = c\pa_yc +\Phi^2\pa_y\Phi^2 + c\pa_y\Phi^2+\Phi^2\pa_yc. $$
We first deduce from the lemma \eqref{lem:vvv} that 
$$ \eps^2 \int_0^t \abs{\psca{\Delta_q^h(\Phi^2\pa_y\Phi^2)_\Theta, \Delta_q^h \Psi^2_\Theta }} dt' \leq C d_q^2 2^{-q} \Vert \Phi^1_\Theta \Vert_{\tilde{L}^2_{t,\dot{\eta}(t)}(\mathcal{B}^1)} \Vert \eps \Psi^2_\Theta \Vert_{\tilde{L}^2_{t,\dot{\eta}(t)}(\mathcal{B}^1)}  $$
We deduce also form the proof of \eqref{eq:J12q} that 
\begin{align*} \int_0^t \abs{\psca{\Delta_q^h(c\pa_yc)_\Theta, \Delta_q^h \Psi^2_\Theta }} dt' &\leq C d_q^2 2^{-q}\Vert b_\Theta \Vert_{L^\infty_t(\mathcal{B}^\f32)}^\f12 \Vert \pa_y c_{\Theta} \Vert_{\tilde{L}^2_t(\mathcal{B}^\f12)} \Vert \Psi^2_\Theta \Vert_{\tilde{L}^2_{t,\dot{\eta}(t)}(\mathcal{B}^1)} \\
&\leq C d_q^2 2^{-q}\Vert b_\Theta \Vert_{L^\infty_t(\mathcal{B}^\f32)}^\f12 \Vert \pa_y b_{\Theta} \Vert_{\tilde{L}^2_t(\mathcal{B}^\f32)} \Vert \Psi^2_\Theta \Vert_{\tilde{L}^2_{t,\dot{\eta}(t)}(\mathcal{B}^1)},
\end{align*}
and 
$$ \int_0^t \abs{\psca{\Delta_q^h(c\pa_y\Phi^2)_\Theta, \Delta_q^h \Psi^2_\Theta }} dt' \leq C2^{-q}d_q^2 \Vert b_\Theta \Vert_{L^\infty_t(\mathcal{B}^\f32)}^\f12 \Vert \pa_y \Phi^2_{\Theta} \Vert_{\tilde{L}^2_t(\mathcal{B}^\f12)} \Vert \Psi^2_\Theta \Vert_{\tilde{L}^2_{t,\dot{\eta}(t)}(\mathcal{B}^1)} .$$

And \eqref{eq:I1q} ensure that
\begin{align*}
\int_0^t &\abs{\psca{\Delta_q^h(\Phi^2\pa_x b)_\Theta, \Delta_q^h \Psi^2_\Theta }} dt' \\ &\leq Cd_q^2 2^{-q}\left( \Vert b_{\Theta} \Vert_{\tilde{L}^\infty_t(\mathcal{B}^{\frac{3}{2}})}^{\frac{1}{2}} \Vert \pa_y \Phi^2_\Theta \Vert_{\tilde{L}^2_t(\mathcal{B}^{\frac{1}{2}})} + \Vert \Phi^2_{\Theta} \Vert_{\tilde{L}^2_{t,\dot{\eta}(t)}(\mathcal{B}^1)}\right)\Vert \Psi^2_{\Theta} \Vert_{\tilde{L}^2_{t,\dot{\eta}(t)}(\mathcal{B}^1)}.
\end{align*}
As a result, multiply the above inequality by $2^q$ and summing over $q \in \ZZ$, we obtain
\begin{align} \label{J5}
\eps^2 &\sum_{q\in\mathbb{Z}} 2^q J_4^q \lesssim  \Vert (\Phi^1_\Theta,\eps \Psi^2_\Theta) \Vert^2_{\tilde{L}^2_{t,\dot{\eta}(t)}(\mathcal{B}^1)} \\ &+ \eps^2 \Vert b_\Theta \Vert_{\tilde{L}^\infty_t(\mathcal{B}^{\f32})}^\f12  \left(  \Vert \pa_y b_\Theta \Vert_{\tilde{L}^2_t(\mathcal{B}^{\f12})} + \Vert \pa_y \Phi^2_\Theta \Vert_{\tilde{L}^2_t(\mathcal{B}^{\f32})} \right)  \Vert \Psi^2_\Theta \Vert_{\tilde{L}^2_{t,\dot{\eta}(t)}(\mathcal{B}^1)}\big) +  \Vert \eps \Phi^2_\Theta \Vert^2_{\tilde{L}^2_{t,\dot{\eta}(t)}(\mathcal{B}^1)}. \notag
\end{align}
By summing all the resulting estimates we obtain the proof of the estimate \eqref{eq:G2}.
\end{proof}
For the proof of the estimates $G_3^q$ and $G_4^q$ it the same of $G_1^q$ and $G_2^q$.

\newpage
\textsc{IMB, Université de Bordeaux, 351, cours de la Libération, 33405 Talence , France}

\textit{Email address} : nacer.aarach@math.u-bordeaux.fr
\end{document}